
\documentclass[letterpaper, 10 pt, conference]{ieeeconf}  

\IEEEoverridecommandlockouts                              


\usepackage{cite}
\usepackage{color}
\usepackage{xxcolor}
\usepackage{mathrsfs}
\usepackage{subcaption}
\usepackage{verbatim}
\usepackage{bbold}
\usepackage{tikz}
\usetikzlibrary{calc,intersections}

\usepackage{graphicx}
\usepackage{tabularx}
\usepackage{relsize}
\usepackage{exscale}


\usepackage[cmex10]{amsmath}
\usepackage{array}
\usepackage{algorithmic}
\usepackage{amsfonts}
\usepackage[ruled, vlined]{algorithm2e}
\usepackage{url}
\usepackage{enumerate}
\usepackage{tikz}
\usepackage{pgfplots}
\usepackage{mathtools}
\usepackage{pgfmath}
\usepgfplotslibrary{groupplots}
\usepgfplotslibrary{fillbetween}
\usetikzlibrary{pgfplots.fillbetween}

\usepackage{enumitem}
\usepackage{amsthm}
\usepackage{subcaption}   
\usepackage{comment}
\usepackage{xcolor}

\usepackage[draft]{todonotes} 

\interdisplaylinepenalty=2500
\newcommand{\kk}{{\kappa_{\tau}}}

\newcommand{\dd}{{\delta}}
\newcommand{\tnull}{{\tau}}

\DeclareMathOperator*{\argmin}{argmin}

\newcommand{\medcup}{\mathsmaller{\bigcup}}

\newcommand{\cl}[1]{\mathcal{#1}}

\newcommand{\lemref}[1]{Lemma \ref{#1}}
\newcommand{\thmref}[1]{Theorem \ref{#1}}

\newcommand{\corref}[1]{Corollary\ref{#1}}

\newcommand{\secref}[1]{Section \ref{#1}}

\newcommand{\figref}[1]{Fig. \ref{#1}}
\newcommand{\defref}[1]{Def. \ref{#1}}

\newcommand{\R}{\mathbb{R}}

\newcommand{\anon}{\cdot}
\newcommand{\CC}{\bm{K}^{\mathrm{cc}}}
\newcommand{\CCt}[1]{K^{\mathrm{cc}}_{#1}}

\newcommand{\s}[1]{\mathsf{#1}}
\newcommand{\bm}[1]{\mathbf{#1}}
\renewcommand{\c}[1]{\mathcal{#1}}

\newcommand{\cx}[1]{\mathrm{c}(#1)}
\newcommand{\eps}{\varepsilon}

\newcommand{\Nxy}[2]{{\| #1 \|}_{#2}}

\newcommand{\BXt}[1]{\Nxy{\s{W}}{\s{X}_{#1}}}
\newcommand{\BXtnull}{\BXt{\tnull-1}}
\newcommand{\BXtneg}{\BXt{-1}}
\newcommand{\vol}{\mathrm{Vol}}
\newcommand{\dxyK}[3]{d(#1,#2;#3)}

\newcommand{\rad}{\dd}
\usepackage[draft]{todonotes} 

\excludecomment{maybe}
\includecomment{normal}
\excludecomment{extended}

\theoremstyle{plain}
\newtheorem{thm}{Theorem}[section]
\newtheorem*{thm*}{Theorem}
\newtheorem{lem}[thm]{Lemma} 
\newtheorem{coro}{Corollary}[thm]

\theoremstyle{definition}
\newtheorem{defn}{Definition}[section] 
\newtheorem*{defn*}{Definition} 

\newtheorem{rem}{Remark}[thm]

\newtheorem*{rem*}{Remark}

\theoremstyle{remark}
\newtheorem*{ex}{Example}

\title{\LARGE \bf
 Robust Model-Free Learning and Control\\ without Prior Knowledge
}

\author{Dimitar Ho and John C. Doyle
\thanks{Dimitar Ho and John C. Doyle are with the Department of Computing and Mathematical Sciences, California Institute of Technology, Pasadena, CA. 
        {\tt\small dho@caltech.edu, doyle@caltech.edu}}%
}

\begin{document}
\maketitle
\thispagestyle{empty}
\pagestyle{empty}


%

\begin{abstract}
We present a simple model-free control algorithm that is able to robustly learn and stabilize an unknown discrete-time linear system with full control and state feedback subject to arbitrary bounded disturbance and noise sequences. The controller does not require any prior knowledge of the system dynamics, disturbances, or noise, yet it can guarantee robust stability and provides asymptotic and worst-case bounds on the state and input trajectories. To the best of our knowledge, this is the first model-free algorithm that comes with such robust stability guarantees without the need to make any prior assumptions about the system. We would like to highlight the new convex geometry-based approach taken towards robust stability analysis which served as a key enabler in our results. We will conclude with simulation results that show that despite the generality and simplicity, the controller demonstrates good closed-loop performance.

\end{abstract}

\section{INTRODUCTION}
\subsection{Motivation and Problem Statement}
\noindent Learning to stabilize unknown dynamical systems from online data has been an active research area in the control community since the 1950s  \cite{1950survey} and has recently attracted the attention of the machine learning community, foremost in the context of reinforcement learning. Although there has been extensive research on this topic, very few of the developed algorithms have reached the level of adoption in real world applications as one would expect. Particularly in areas where frequent interaction with the physical world is necessary, system failure is costly, and the deployment of control algorithms is only possible if the algorithm can guarantee that minimal safety and performance specifications will be met during operation. Although there have been past research \cite{94astromadaptive},\cite{aastrom1973self} and recent research efforts \cite{garcia2015comprehensive},\cite{dean2018regret}, \cite{abbasi2018regret}, \cite{berkenkamp2017safe}, \cite{cohen2019learning},\cite{SafeLQRDean2019} to address this problem, very few algorithms come with the necessary performance and safety guarantees to be deployed in real world applications thus far. Motivated by this, we revisit the basic problem of learning to stabilize a linear system and aim to find learning and control strategies with the least restrictive assumptions that can still give robust stability bounds for the closed loop.

In this paper, we focus on the problem of adaptively stabilizing a linear discrete-time system
\begin{subequations} \label{eq:sys}
\begin{align}
 z_{k+1} &= A_0z_k + u_k + d_k\\
x_k &= z_k + n_k
\end{align}
\end{subequations}
with state $z_k$, bounded disturbance $d_k$, bounded noise $n_k$ and control action $u_k$ that is only allowed to depend on noisy state measurements until time $k$, i.e: $x_{0},\dots ,x_{k}$. We are interested in finding controllers that can stabilize (in the sense of BIBO- or input-to-state stability guarantees) without requiring any additional assumptions about the unknown system matrix $A_0$ and the disturbance/noise sequences $(d_k)$, $(n_k)$. While admittedly system \eqref{eq:sys} describes a very restrictive class of linear systems (full state feedback and control), nearly all available learning and control approaches need to make some prior assumptions about this system in order to state stability and performance guarantees. Most commonly, these assumptions come in the form of a priori bounds on $d_k$, $n_k$ and/or $A_0$.
\subsection{Related Work}
\noindent We will review relevant literature in the context of our problem setting. 
Classical control approaches are found in the literature of adaptive control with \cite{ioannou1996robust},\cite{Ioannou:2006}, \cite{sastry2011adaptive} focusing on the deterministic and \cite{94astromadaptive} on the stochastic setting. The self-tuning regulator \cite{aastrom1973self} and its variations come with asymptotic optimality \cite{guo1995convergence}, yet robust stability guarantees without restrictive assumptions are few and can only be made in the probabilistic sense. On the deterministic side \cite{ioannou1996robust},\cite{Ioannou:2006} point out that instabilities can occur with traditional adaptive schemes and provide improved version of adaptive controllers that come with robust stability and performance guarantees. Yet, the desired guarantees depend on knowing some bounds of the system parameters and disturbance signals. 
Other challenges associated with classical adaptive control approaches are discussed in \cite{anderson2008challenges}, \cite{anderson2005failures}. Methods in safe reinforcement learning \cite{garcia2015comprehensive}, \cite{berkenkamp2017safe},\cite{akametalu2014reachability}, \cite{fisac2018general} have made great progress towards methods that guarantee robust safety properties for classes of nonlinear systems, yet the synthesis procedures involved are computationally expensive and they require knowledge of an initially robust stabilizing controller, even in the case of the simple linear system \eqref{eq:sys}. Recent work \cite{abbasi2018regret},\cite{dean2017sample},\cite{dean2018regret}, \cite{mania2019certainty}, \cite{cohen2019learning}, \cite{SafeLQRDean2019} has made significant progress in providing algorithms with robust finite-time performance guarantees for the adaptive linear quadratic gaussian regulator problem. 
However, in the context of our simple linear problem setting, all methods require that the uncertainty in the system dynamics (i.e. $A_0$) is small enough at the outset in order to provide stability guarantees of the closed loop.


\subsection{Main contribution and overview of the paper}
 \noindent In this work, we present a simple controller that can adaptively stabilize \eqref{eq:sys} without any further assumptions on disturbance, noise or the system matrix $A_0$. The presented algorithm performs tractable computations (solving a linear program each time step) and provides both uniform asymptotic and worst-case guarantees on the state- and input trajectories. An additional surprising feature of the presented algorithm is that it is not based on the certainty-equivalence principle and has a completely model-free formulation. To the best of our knowledge, this is the first model-free adaptive controller that can give our robust stability guarantees without requiring any prior knowledge about the unknown system \eqref{eq:sys}.

Our core theoretical contribution is a novel approach towards stability analysis. We first show that in any closed loop trajectory $(x_t)$, there are only a finite number of time-instances $t_i$ at which $x_{t_i+1}$ is significantly larger than $x_{t_i}$. We term those time-instances as "unstable transitions" and our first main theorem shows an upper-bound on the occurrence of these unstable transitions in the closed loop state trajectory. Then, our stability and performance guarantees follow as corollaries of this result.

We develop a new technique based on convex geometry to bound the occurrence of unstable transitions in the closed loop. Vaguely speaking, our main idea is to show that if an unstable transition occurs at some time $t'$, our proposed adaptive controller learns enough from this observation to prevent similar unstable transitions from occurring in the future. Mathematically, we formulate this idea in two steps: 1. We define a distance function $d$ between unstable transitions and show that w.r.t. to $d$, we can identify the set of unstable transitions with a bounded separated set $\s{P}$ of equal cardinality. 2. We bound the cardinality of $\s{P}$ by a metric-entropy type of quantity, which leads to an upper bound on the maximum number of times that unstable transitions can occur. We discuss the convex geometry based techniques in detail, to emphasize their potential use for robust design and analysis of learning and control algorithms, particularly in the model-free setting.

The paper is organized as follows. We formulate our problem in \secref{sec:prob} and give a brief overview of our main results in \secref{sec:previewresult}. In \secref{sec:mfadap}, we derive the model free closed loop equation and explain the intuition behind the proposed control law. In \secref{sec:mainresult}, we present and discuss our main results in detail. \secref{sec:stabilityanalysis} and \secref{sec:boundingproof} highlight the main techniques and ideas used to prove our results. \secref{sec:boundingproof} and the supporting discussion in the appendix discuss the convex geometry based techniques used to establish the bound on the unstable closed loop transitions. \secref{sec:metricentropy} highlights a parallel between the role of metric entropy in the context of our results and in the context of statistical learning theory, which could serve as an interesting intersection for future research. We conclude with some experimental results in \secref{sec:sim}. 
\section{Problem Setup}\label{sec:prob}
\noindent For our discussion, we transform the system \eqref{eq:sys} w.r.t to the measurements $x_t$ and obtain the equivalent\footnote{Since $x_t = z_t+n_t$, controlling the system state $z_t$ is equivalent to controlling the noisy measurement $x_t$.} system
        \begin{align}
                \label{eq:syss} x_{t+1} &= A_0 x_t + u_t + w_t\\
        \notag w_t &:= d_t +n_{t+1}-A_0 n_{t},
        \end{align}
where $w_t$ represents the lumped bounded disturbance at time $t$ which summarizes the influence on the system of the original noise and disturbance. A causal controller can be represented as a collection $\bm{K} =(K_0,K_1,\dots)$ of control laws $K_t:(x_t,\dots,x_0) \mapsto u_t$. The closed loop of $\bm{K}$ and \eqref{eq:syss} is then described by the equation
\begin{align}
        \label{eq:syss-2} x_{t+1} &= A_0 x_t + K_t(x_t,\dots,x_0) + w_t.
\end{align}
Our goal is now to design $\bm{K}$ such that the closed-loop \eqref{eq:syss-2} is bounded-input bounded-output stable for any $A_0$ and any bounded sequence\footnote{We use bracket notation to distinguish a sequence $(w_t)$ from its element $w_t$ at time $t$} $(w_t)$. More specifically, we want to design $\bm{K}$ such that any closed-loop trajectory $(x_t)$ satisfies bounds of the form
\begin{align*}
        \sup_t \|x_t\| &\leq f_1(A_0, \sup_t\|w_t\|) \\
        \limsup_{t \rightarrow \infty} \|x_t\| &\leq f_2(A_0, \sup_t\|w_t\|)
\end{align*}
for some fixed functions $f_1$ and $f_2$.

\section{Preview of Main Result}\label{sec:previewresult}
\noindent We will start by describing the implementation of our proposed controller and a summary of its performance guarantees in a closed-loop with system \eqref{eq:syss}.

\subsection{Proposed control strategy}



\noindent For adaptive stabilization of \eqref{eq:syss} we propose a dynamic controller $\CC=(\CCt{0},\CCt{1},\dots)$, which at every time step $t$ computes the input as $$u_t=\CCt{t}(x_t,\dots,x_0, u_{t-1},\dots,u_0)$$ based on all previous measurements $x_t,\dots,x_0$ and previously taken actions $u_{t-1},\dots,u_0$. The controller $\CC$ computes the input $u_t$ at time $t$ as
\begin{align}\label{eq:previewcc}
        \CCt{t}(x_t,\dots,x_0, u_{t-1},\dots,u_0):=\left(U_{t-1} -X^+_{t-1} \right)\lambda_{t-1} (x_t)
\end{align}
where $\lambda_{t-1}(x_t)$ is defined as the solution of the convex optimization problem
\begin{align}\label{eq:previewopt}
        \begin{array}{rl}
        \min\limits_{\lambda} & \left\| \lambda\right\|_{1} \\
        \mathrm{s.t.}& X_{t-1} \lambda = x_t 
        \end{array}
\end{align}
and where the matrices $U_t$, $X_t$ and $X^+_t$ are composed of state $x_t$ and input $u_t$ measurements up until time $t$ as 
\begin{subequations}\label{subeq:matrix}
        \begin{align}
                X_{t} &:= [x_t,x_{t-1},\dots,x_0,X_{-1}] \\
                U_{t} &:= [u_t,u_{t-1},\dots,u_0,U_{-1}] \\
                X^+_{t-1} &:= [x_t,x_{t-1},\dots,x_1,X^+_{-1}]
        \end{align}      
\end{subequations}

\noindent with fixed chosen matrices $X_{-1}$, $U_{-1}$, $X^+_{-1} \in \R^{n \times n_0}$ such that $n_0 > n$ and $\mathrm{rank}(X_{-1})=n$.
The matrices $X_{-1}$, $U_{-1}$, $X^+_{-1}$ with columns $\hat{x}_i$, $\hat{u}_i$ and $\hat{x}^+_i$ defined as 
\begin{align}\label{eq:initXU}
        \begin{array}{c}
                X_{-1} := [\hat{x}_1,\dots,\hat{x}_{n_0}],\quad  
                U_{-1} :=  [\hat{u}_1,\dots,\hat{u}_{n_0}],\\ 
                X^+_{-1} := [\hat{x}^+_1,\dots,\hat{x}^+_{n_0}]
        \end{array}
\end{align} 
 serve to initialize the controller $\CC$. Depending on the application scenario, the matrices can be chosen as follows:
 \begin{enumerate}[label=(I\arabic*)]
         \item\label{it:init1} \textbf{no prior knowledge:} choose 
         $\hat{u}_i=0$, $\hat{x}^+_i=0$, $\hat{x}_i = \eps e_i$ for $1 \leq i \leq n$ where $e_i$ is the $i$th cartesian unit vector and $\eps>0$ some positive scalar. The parameter $\eps$ is can be viewed as an initial guess on $\sup_t \|w_t\|_1$, the supremum of the disturbance sequence in $1$-norm.
         \item\label{it:init2} \textbf{prior data available:} Assume we had noisy data available $x_{-j}$, $x^{+}_{-j}$, $u_{-j}$, $1\leq j \leq k$ collected from the system \eqref{eq:syss} before $t=0$. I.e. the data satisfies
         \begin{align}
                x^{+}_{-j} = A_0 x_{-j} + u_{-j} + w_{-j}
         \end{align}
         with $w_{-j}$ denoting the corresponding lumped disturbances. Then, in addition to the \ref{it:init1}-initialization $\hat{u}_i=0$, $\hat{x}^+_i=0$, $\hat{x}_i = \eps e_i$ for $1 \leq i \leq n$, we can incorporate the data $x_{-j}$, $x^{+}_{-j}$, $u_{-j}$ by appending additional columns as $$\hat{u}_{n+i} := u_{-i},\;\;\hat{x}_{n+i} := x_{-i},\;\;\hat{x}^+_{n+i} := x^+_{-i},\; 1\leq i \leq k$$
 \end{enumerate}
 

\subsection{ Closed loop guarantees }
\noindent In this paper, we will show that the controller $\CC$
stabilizes system \eqref{eq:sys} without requiring any knowledge of $A_0$ or $(w_t)$. We will term the controller $\CC$ defined by \eqref{eq:previewcc}, \eqref{eq:previewopt} the \textit{causal cancellation controller}, as it can be interpreted at time $t$ to cancel out the part of the dynamics that can be inferred from all previously collected observations $x_{t},\dots, x_{0}$ and actions $u_{t-1},\dots,u_{0}$.\\

\noindent As presented in detail in \secref{sec:mainresult}, for \textbf{any} initialization $X_{-1}$, $U_{-1}$, $X^+_{-1}$, (only assuming $\mathrm{rank}(X_{-1})=n$) the controller \eqref{eq:previewopt} \textbf{always} ensures a closed loop for which:
\begin{enumerate}[label=(\roman*)]
        \item the state $(x_t)$ and input $(u_t)$ are uniformly bounded,
        \item an analytic upper-bound can be derived for the worst-case state-deviation, 
        \item after some finite time, $(x_t)$ and $(u_t)$ converge exponentially to a bounded limit set.
\end{enumerate}
The above guarantees will be phrased w.r.t. a norm $\Nxy{\anon}{\s{W}}$ which measures $(x_t)$ and $(u_t)$ relative to the size of the disturbance $(w_t)$ that produced them. 

\noindent Moreover, as described in \ref{it:init2}, we can incorporate prior data into the initialization of the controller $\CC$. In the case where the provided data is "more informative" than the default initialization \ref{it:init1}, the closed loop guarantees and bounds tighten. Hence, the proposed control scheme $\CC$ does not need prior knowledge to give closed-loop stability guarantees, but if prior knowledge is available, it can be leveraged through the initialization \ref{it:init2} to improve closed-loop guarantees.

\section{The model-free adaptive controller\\ and closed-loop equations}\label{sec:mfadap}
\noindent We will start by first discussing the key idea and intuition behind the causal cancellation controller $\CC$.

\secref{subsec:dataequation} is deriving that all $(X^+_{t}, X_{t}, U_{t})$ satisfies the open loop equation of the unknown system for some appropriately defined disturbance matrix $W_{t}$. This is used to show in \secref{subsec:mfapprox} that at time $t$ and state $x_t$, the causal cancellation control law $\CCt{t}$ approximates the ideal deadbeat control action $u^*_t = -A_0x_t$ directly from online data $(X^+_{t-1}, X_{t-1}, U_{t-1})$ without requiring to explicitly estimate $A_0$. This relation leads to a model-free form of the closed loop equation, shown in \secref{subsec:mfcl}, which is used for the later stability analysis. 


\subsection{Open loop equation for data matrices}\label{subsec:dataequation}
\noindent Recall from \eqref{subeq:matrix}, that $(X^+_{t}, X_{t}, U_{t})$ are constructed from some fixed initialization $(X^+_{-1}, X_{-1}, U_{-1})$ and some state $(x_t)$ and input $(u_t)$ sequences of the system \eqref{eq:syss} with respect to some fixed lumped disturbance $(w_t)$. Define the disturbance matrix $W_t \in \R^{n \times (t+1+n_0)}$ as the matrix 
\begin{subequations}
        \label{eq:Wdef}
\begin{align}
        \label{eq:Wmat}W_{t} &:= [w_t,w_{t-1},\dots,w_0,W_{-1}]\\
        \label{eq:Winit}W_{-1} & := [\hat{w}_1,\dots,\hat{w}_{n_0}]:= X^+_{-1}-A_0 X_{-1}-U_{-1}
\end{align}
\end{subequations}
of lumped disturbances $w_t,\dots, w_0$ and the matrix $W_{-1}$ which is composed of the columns $\hat{w}_1,\dots,\hat{w}_{n_0}$. With the above auxiliary definition we can easily see that the matrices $U_t$, $X_t$, $X^+_t$ and $W_t$ are satisfying the linear equation
\begin{align}\label{eq:mateq}
X^{+}_{t-1} = A_0 X_{t-1} + U_{t-1} + W_{t-1},
\end{align}
which resembles the open-loop dynamics of the unknown system.

We will term $\hat{w}_i$ to be "virtual disturbances", which are defined to account for errors introduced through the initial guesses $X_{-1}$, $U_{-1}$, $X^{+}_{-1}$. If we take $\hat{x}_i, \hat{u}_i, \hat{x}^+_i$ to be the $i$th columns of the initialization matrices $X_{-1}$, $U_{-1}$, $X^{+}_{-1}$ and $W_{-1}$, we can rewrite the definition \eqref{eq:Winit} column-wise in the form
\begin{align}
        \hat{x}^+_i &= A_0 \hat{x}_i + \hat{u}_i + \hat{w}_i,\quad 1 \leq i \leq n_0.
\end{align}
to see that each pair of ($\hat{x}_i, \hat{u}_i$) and $\hat{x}^+_i$ can be posed as a transition of the 
 true unknown system \eqref{eq:syss}, w.r.t to the virtual disturbance $\hat{w}_i$. 
 \begin{ex}
        If we initialize according to procedure \ref{it:init1}, then $\hat{w}_i = -\eps A_0 e_i$ and $W_{-1} = -\eps A_0$.
\end{ex}

\subsection{Model free approximation of deadbeat control action}\label{subsec:mfapprox}
\noindent In a compact form, the causal cancellation control laws $\CCt{t}: (x_t,X^+_{t-1}, X_{t-1}, U_{t-1})\mapsto u_t$ are defined as
\begin{align} \label{eq:cc2}
        &u_t = \left(U_{t-1} -X^+_{t-1} \right)\lambda_{t-1} (x_t)\\
         \notag &\text{where}\;\;\lambda_{t-1} (x_t) := \argmin\limits_{\lambda \text{ s.t. }X_{t-1}\lambda=x_t} \Nxy{\lambda}{1}
\end{align}
\begin{rem*}
        The technical issue that a minimizer of \eqref{eq:previewopt} might not be unique is not relevant for the analysis and for simplicity will be ignored. 
\end{rem*}


\noindent The function $\lambda_{t-1}(x_t)$ is defined to always satisfy
 \begin{align}\label{eq:rep}
X_{t-1}\lambda_{t-1}(x_t) = x_t.
\end{align}
 and represents a decomposition of the state $x_t$ as a linear combination of the columns of $X_{t-1}$.

\noindent Rewriting equation \eqref{eq:mateq} as 
\begin{align}\label{eq:diffmateq}
        U_{t-1}-X^{+}_{t-1} = -A_0 X_{t-1}-W_{t-1} 
\end{align}
and substituting the right hand side of equation \eqref{eq:diffmateq} into \eqref{eq:cc2} and using \eqref{eq:rep} allows us to rewrite the controller equivalently as 
\begin{align}
        \label{eq:rewrite} u_t &= \left(U_{t-1} -X^+_{t-1} \right)\lambda_{t-1} (x_t)\overset{\eqref{eq:rep}}{=}-A_0 x_{t} - W_{t-1}\lambda_{t-1}(x_{t})
\end{align}
The above shows that the control law \eqref{eq:cc2} is a direct way to approximate the ideal deadbeat control action $-A_0x_t$ from the online data matrices $U_{t-1}$, $X^+_{t-1}$, $X_{t-1}$. The additional term $- W_{t-1}\lambda_{t-1}(x_{t})$ is the corresponding approximation error at time $t$. As will become more clear later, the optimization step in \eqref{eq:cc2} is minimizing an upperbound of $- W_{t-1}\lambda_{t-1}(x_{t})$ w.r.t. to the norm $\Nxy{\anon}{\s{W}_{t-1}}$ (See definition in \secref{subsec:notdef}).
\subsection{The model-free closed-loop equations}\label{subsec:mfcl}
\noindent Setting $\bm{K}=\CC$ and using \eqref{eq:rewrite}, the closed loop equations \eqref{eq:syss-2} take the form of
\begin{subequations}
        \begin{align}
                \label{eq:clloop} x_{t+1} &=-W_{t-1}\lambda_{t-1}(x_t)+w_t.\\
                u_t &= \left(U_{t-1} -X^+_{t-1} \right)\lambda_{t-1} (x_t)
        \end{align}     
\end{subequations}

\noindent which will serve as the basis for our stability analysis. The above equation says that the closed loop dynamics are entirely determined by $W_{t-1}$ (containing virtual and past lumped disturbances), as well as how we choose to decompose $x_t$ as a linear combination $X_{t-1}\lambda_{t-1}(x_t)$ of past data. Suitably, we could call equation \eqref{eq:clloop} to be a model-free description of the closed loop, since the dynamics are formulated independent of the underlying true unknown system $A_0$. 

\section{Main results}\label{sec:mainresult}

\noindent \thmref{thm:finiteunstable} and \thmref{thm:main} are the main theoretical results. \thmref{thm:finiteunstable} states that any state trajectory $(x_t)$ of the closed loop has finitely many "unstable transitions" (defined in \defref{def:unstabletransition}). 
\thmref{thm:main} is a consequence of \thmref{thm:finiteunstable} and presents our main stability bounds for state and input trajectories of the closed loop.

To formulate our results, we first introduce necessary notation and definitions.
\subsection{Notation and Definitions}\label{subsec:notdef}
\begin{defn}\label{def:matrixsetnotation}
If $M\in\R^{n\times N}$ is a matrix with $N$ columns $m_i$, then define the corresponding variable $\s{M}$ in sans serif font to denote the set $\s{M}:=\{m_1,\dots, m_N\}$.
\end{defn}
\begin{defn}\label{def:abscvx-0}
        Let $\s{S}$ be a set in $\R^n$, then the set of all finite linear combinations $\sum^{N}_{i=1} \lambda_i x_i$ of elements $x_i$ in $\s{S}$ with $\sum^{N}_{i=1} |\lambda_i| \leq 1$ is called the \textit{absolute convex hull} of $\s{S}$ and we will refer to its closure as $\cx{\s{S}}$:
        \begin{align}\label{eq:defcxS}
        \cx{\s{S}} := \mathrm{cl}\left\{\sum^{N}_{i=1} \lambda_i x_i\left|\quad \{x_i\}^{N}_{i=1} \subset \s{S}, \quad \sum^{N}_{i=1} |\lambda_i| \leq 1 \right.\right\}.
        \end{align}
\end{defn}
\begin{defn}\label{def:repnorm-0}
        For a fixed bounded set $\s{S} \subset \R^n$, let $\Nxy{\anon}{\s{S}}:\R^n \mapsto \R_{\geq 0}$ be the norm defined for all $x\in\R^n$ as
\begin{align}\label{eq:defrepnorm}
        \Nxy{x}{\s{S}} := \left\{\begin{array}{ll} \min \left\{r \geq 0 \left| x \in r \cx{\s{S}}\right.\right\}, & \text{for }x\in \mathrm{span}(\s{S}) \\ \infty, &\text{else}
         \end{array}\right.
\end{align}
and for sets $\s{S}' \subset \R^n$, define $\Nxy{\s{S}'}{\s{S}}$ as the quantity
\begin{align}\label{eq:s1s2}
        \Nxy{\s{S}'}{\s{S}} &:= \max\limits_{z \in \cx{\s{S}'}} \Nxy{z}{\s{S}}
\end{align}
\end{defn}
\noindent Key properties of the above norm and relevant concepts from convex geometry are discussed in the appendix \secref{sec:appcvx}. 
\noindent For a fixed disturbance $(w_t)$ and virtual disturbance $\hat{w}^i$, define $\s{W}$ as the corresponding fixed set 
\begin{align}\label{eq:Wdef}
        \s{W} := \{w_t|t \in \mathbb{N}\}\cup\{\hat{w}_i| 1 \leq i \leq n_0 \}.
\end{align} 

\begin{figure*}
        \centering
        \resizebox{0.3\textwidth}{!}{
        \begin{tikzpicture}[font=\large] 

      \tikzset{invclip/.style={clip,insert path={{[reset cm]
      (-16383.99999pt,-16383.99999pt) rectangle (16383.99999pt,16383.99999pt)}}}}
      \definecolor{K}{HTML}{0047AB}
      \definecolor{Kcopy}{HTML}{66B2DD}
      \definecolor{blowup}{HTML}{FF511C}
      \definecolor{cone}{HTML}{FFBE00}
      \definecolor{ballpair}{HTML}{00C855}
      \colorlet{ball}{blowup!50!cone}
  
      \def \light{ 50 }
      \def \dark{ 60 }
      \def \lthick{ 1.5pt }
      \def \lthin{ 1pt }
      \def \tta{ -10.00000000000000 } 
      \def \k{    -3.00000000000000 } 
      \def \l{     6.00000000000000 } 
      \def \d{     5.00000000000000 } 
      \def \h{     7.00000000000000 } 
      \def \r{     1.0000 } 
      \def \lam{   2.0 } 
      \def \xlen{   8.0 }
      \def \ylen{   7.0 }
      \def \pang{    70.0 } 
      \def \prad{   1.0}
      \def \del{   1.6 } 
      \pgfmathparse{1/\del}
                  \let\invdel\pgfmathresult
  
      \def \beta{ 20.0}
      
      \clip (-5,-4) rectangle (5,4);
      \coordinate (A) at (-\r,-\r); 
      \coordinate (B) at (\r,-\r); 
      \coordinate (C) at (\r,\r); 
      \coordinate (D) at (-\r,\r); 
      \coordinate (O) at (0,0);
      \coordinate (xleft) at ({-0.5*\xlen},0);
      \coordinate (xright) at ({0.5*\xlen},0);
      \coordinate (ydown) at (0,{-0.5*\ylen});
      \coordinate (yup) at (0,{0.5*\ylen});
  \def \dela{ 1.6}
  \def \rpos{2.4}
  \def \rnorm{2}
  \pgfmathparse{sqrt(\dela)}\let\sqrtdel\pgfmathresult
  \pgfmathparse{sqrt(\dela)*\rpos}\let\normdel\pgfmathresult
  \def \anga{ 160.0}
  \def \angb{ 130}
  \def \angc{ 90}
  \def \angd{ 50}
  \def \ange{ 20}

  \coordinate (W1) at (-\r,\r); 
  \coordinate (W2) at (\r,\r); 
  \coordinate (W3) at (\angd:\rpos);
  \coordinate (W4) at (\ange:\rpos);

  \coordinate (PX) at (intersection of O--(110:1) and W1--W3);

  \coordinate (PY) at (intersection of O--(80:1) and W1--W3);

  \newcommand{\anno}[3]{
  \begin{scope}
      \coordinate (X) at #1;
      \fill[black]    (X) circle [radius=2pt];  
      \draw (X) node [#3,black]     {\Large #2};
  \end{scope}
  }
  \newcommand{\annob}[3]{
      \begin{scope}
          \coordinate (X) at #1;
          \fill[gray!50!black]    (X) circle [radius=2pt];  
          \draw (X) node [#3,gray!50!black]     {\large #2};
      \end{scope}
 }
  
  \draw[->] (xleft)--(xright) node[right]{$e_1$};
  \draw[->] (ydown)--(yup) node[above]{$e_2$};

  \filldraw[line width = \lthin, fill = K!\light, draw=K!\dark!black, opacity = 0.3] 
  ($\rnorm*(W1)$) -- ($\rnorm*(W3)$) -- ($\rnorm*(W4)$) -- ($-\rnorm*(W1)$) -- ($-\rnorm*(W3)$) -- ($-\rnorm*(W4)$) -- cycle;
  \filldraw[line width = \lthick, fill = K!\light, draw=K!\dark!black] 
  (W1) -- (W3) -- (W4) -- ($-1*(W1)$) -- ($-1*(W3)$) -- ($-1*(W4)$) -- cycle;
\anno{(W1)}{$\hat{w}_1$}{above}
\anno{(W2)}{$\hat{w}_2$}{above}
\annob{($-1*(W1)$)}{$-\hat{w}_1$}{below}
\annob{($-1*(W2)$)}{$-\hat{w}_2$}{below}
\anno{(W3)}{$w_0$}{right}
\anno{(W4)}{$w_1$}{right}
\annob{($-1*(W4)$}{$-w_1$}{left}
\annob{($-1*(W1)$)}{$-\hat{w}_1$}{below}
\annob{($-1*(W2)$)}{$-\hat{w}_2$}{below}
\annob{($-1*(W3)$)}{$-w_0$}{below}

\anno{($\rnorm*(PX)$)}{$x$}{above}

\draw [->, K!\dark!black, line width = \lthick] ($(PY)+(0.1,0.1)$.east) 
to [out=60, in=-80] ($\rnorm*(PY)+(0.1, 0.0)$.east);

\coordinate (labelarrow) at ($0.5*(PY) + 0.5*\rnorm*(PY) + (1.1,0.6)$);
\draw (labelarrow) node [line width = \lthick, K!\dark!black]     {\Large $\times \Nxy{x}{\s{W}}$};

\draw (O) node [K!\dark!black]     {\large $\cx{\s{W}}$};

  \end{tikzpicture}
        }     
        \resizebox{0.3\textwidth}{!}{
                \begin{tikzpicture}[font=\large] 

      \tikzset{invclip/.style={clip,insert path={{[reset cm]
      (-16383.99999pt,-16383.99999pt) rectangle (16383.99999pt,16383.99999pt)}}}}
      \definecolor{K}{HTML}{0047AB}
      \definecolor{Kcopy}{HTML}{66B2DD}
      \definecolor{blowup}{HTML}{FF511C}
      \definecolor{cone}{HTML}{FFBE00}
      \definecolor{ballpair}{HTML}{00C855}
      \colorlet{ball}{blowup!50!cone}
  
      \def \light{ 50 }
      \def \dark{ 60 }
      \def \lthick{ 1.5pt }
      \def \lthin{ 1pt }
      \def \tta{ -10.00000000000000 } 
      \def \k{    -3.00000000000000 } 
      \def \l{     6.00000000000000 } 
      \def \d{     5.00000000000000 } 
      \def \h{     7.00000000000000 } 
      \def \r{     1.0000 } 
      \def \lam{   2.0 } 
      \def \xlen{   8.0 }
      \def \ylen{   7.0 }
      \def \pang{    70.0 } 
      \def \prad{   1.0}
      \def \del{   1.6 } 
      \pgfmathparse{1/\del}
                  \let\invdel\pgfmathresult
  
      \def \beta{ 20.0}
      
      \clip (-5,-4) rectangle (5,4);
      \coordinate (A) at (-\r,-\r); 
      \coordinate (B) at (\r,-\r); 
      \coordinate (C) at (\r,\r); 
      \coordinate (D) at (-\r,\r); 
      \coordinate (O) at (0,0);
      \coordinate (xleft) at ({-0.5*\xlen},0);
      \coordinate (xright) at ({0.5*\xlen},0);
      \coordinate (ydown) at (0,{-0.5*\ylen});
      \coordinate (yup) at (0,{0.5*\ylen});

  \def \dela{ 1.6}
  \def \rpos{2.4}
  \def \rnorm{2.4}
  \pgfmathparse{sqrt(\dela)}\let\sqrtdel\pgfmathresult
  \pgfmathparse{sqrt(\dela)*\rpos}\let\normdel\pgfmathresult
  \def \anga{ 160.0}
  \def \angb{ 130}
  \def \angc{ 90}
  \def \angd{ 50}
  \def \ange{ 20}

  \coordinate (X1) at (0,\r); 
  \coordinate (X2) at (\r,0); 
  \coordinate (X3) at ({1.5*\r}, {0.5*\r});
  \coordinate (X4) at ({2*\r},0);
  \coordinate (Xn4) at ({-2*\r},0);

  \coordinate (W1) at (-\r,\r); 
  \coordinate (W2) at (\r,\r); 
  \coordinate (W3) at (\angd:\rpos);
  \coordinate (W4) at (\ange:\rpos);

  \coordinate (PX) at (intersection of O--(110:1) and Xn4--X1);

  \coordinate (PY) at (intersection of O--(80:1) and X1--X3);

  \newcommand{\anno}[3]{
  \begin{scope}
      \coordinate (X) at #1;
      \fill[black]    (X) circle [radius=2pt];  
      \draw (X) node [#3,black]     {\Large #2};
  \end{scope}
  }
  \newcommand{\annob}[3]{
      \begin{scope}
          \coordinate (X) at #1;
          \fill[gray!50!black]    (X) circle [radius=2pt];  
          \draw (X) node [#3,gray!50!black]     {\large #2};
      \end{scope}
 }
  
  \draw[->] (xleft)--(xright) node[right]{$e_1$};
  \draw[->] (ydown)--(yup) node[above]{$e_2$};


  \filldraw[line width = \lthin, fill = blowup!\light, draw=blowup!\dark!black, fill opacity = 0.3] 
  ($\rnorm*(X1)$) -- ($\rnorm*(X3)$) -- ($\rnorm*(X4)$) -- ($-\rnorm*(X1)$) -- ($-\rnorm*(X3)$) -- ($-\rnorm*(X4)$) -- cycle;
  \filldraw[line width = \lthick, fill = K!\light, draw=K!\dark!black] 
  (W1) -- (W3) -- (W4) -- ($-1*(W1)$) -- ($-1*(W3)$) -- ($-1*(W4)$) -- cycle;

  \filldraw[line width = \lthick, fill = blowup!\light, draw=blowup!\dark!black] 
  (X1) -- (X3) -- (X4) -- ($-1*(X1)$) -- ($-1*(X3)$) -- ($-1*(X4)$) -- cycle;



\coordinate (labelarrow) at (2.5, 2.4);
\draw (labelarrow) node [line width = \lthick, blowup!\dark!black]     {\Large $\Nxy{\s{W}}{\s{X}_1} \times \cx{\s{X}_1}$ };

\draw (O) node [blowup!\dark!black]     {\Large $\cx{\s{X}_1}$};
\coordinate (O1) at (1.2,1.2);
\draw (O1) node [K!\dark!black]     {\Large $\cx{\s{W}}$};

  \end{tikzpicture}}   
        \resizebox{0.3\textwidth}{!}{
        \begin{tikzpicture}[font=\large] 

      \tikzset{invclip/.style={clip,insert path={{[reset cm]
      (-16383.99999pt,-16383.99999pt) rectangle (16383.99999pt,16383.99999pt)}}}}
      \definecolor{K}{HTML}{FF511C}
      \definecolor{Kcopy}{HTML}{66B2DD}
      \definecolor{blowup}{HTML}{FF511C}
      \definecolor{cone}{HTML}{FFBE00}
      \definecolor{ballpair}{HTML}{00C855}
      \colorlet{ball}{blowup!50!cone}
  
      \def \light{ 50 }
      \def \dark{ 60 }
      \def \lthick{ 1.5pt }
      \def \lthin{ 1pt }
      \def \tta{ -10.00000000000000 } 
      \def \k{    -3.00000000000000 } 
      \def \l{     6.00000000000000 } 
      \def \d{     5.00000000000000 } 
      \def \h{     7.00000000000000 } 
      \def \r{     1.0000 } 
      \def \lam{   2.0 } 
      \def \xlen{   8.0 }
      \def \ylen{   7.0 }
      \def \pang{    70.0 } 
      \def \prad{   1.0}
      \def \del{   1.6 } 
      \pgfmathparse{1/\del}
                  \let\invdel\pgfmathresult
  
      \def \beta{ 20.0}
      
      \clip (-5,-4) rectangle (5,4);
      \coordinate (A) at (-\r,-\r); 
      \coordinate (B) at (\r,-\r); 
      \coordinate (C) at (\r,\r); 
      \coordinate (D) at (-\r,\r); 
      \coordinate (O) at (0,0);
      \coordinate (xleft) at ({-0.5*\xlen},0);
      \coordinate (xright) at ({0.5*\xlen},0);
      \coordinate (ydown) at (0,{-0.5*\ylen});
      \coordinate (yup) at (0,{0.5*\ylen});
  \def \dela{ 1.6}
  \def \rpos{2.4}
  \def \rnorm{2.4}
  \pgfmathparse{sqrt(\dela)}\let\sqrtdel\pgfmathresult
  \pgfmathparse{sqrt(\dela)*\rpos}\let\normdel\pgfmathresult
  \def \anga{ 160.0}
  \def \angb{ 130}
  \def \angc{ 90}
  \def \angd{ 50}
  \def \ange{ 20}

  \coordinate (W1) at (0,\r); 
  \coordinate (W2) at (\r,0); 
  \coordinate (W3) at ({1.5*\r}, {0.5*\r});
  \coordinate (W4) at ({2*\r},0);
  \coordinate (Wn4) at ({-2*\r},0);

  \coordinate (PX) at (intersection of O--(40:1) and W1--W3);

  \coordinate (PY) at (intersection of O--(80:1) and W1--W3);

  \newcommand{\anno}[3]{
  \begin{scope}
      \coordinate (X) at #1;
      \fill[black]    (X) circle [radius=2pt];  
      \draw (X) node [#3,black]     {\Large #2};
  \end{scope}
  }
  \newcommand{\annob}[3]{
      \begin{scope}
          \coordinate (X) at #1;
          \fill[gray!50!black]    (X) circle [radius=2pt];  
          \draw (X) node [#3,gray!50!black]     {\large #2};
      \end{scope}
 }
  
  \draw[->] (xleft)--(xright) node[right]{$e_1$};
  \draw[->] (ydown)--(yup) node[above]{$e_2$};

  \filldraw[line width = \lthin, fill = K!\light, draw=K!\dark!black, opacity = 0.3] 
  ($\rnorm*(W1)$) -- ($\rnorm*(W3)$) -- ($\rnorm*(W4)$) -- ($-\rnorm*(W1)$) -- ($-\rnorm*(W3)$) -- ($-\rnorm*(W4)$) -- cycle;
  \filldraw[line width = \lthick, fill = K!\light, draw=K!\dark!black] 
  (W1) -- (W3) -- (W4) -- ($-1*(W1)$) -- ($-1*(W3)$) -- ($-1*(W4)$) -- cycle;
\anno{(W1)}{$\hat{x}_1$}{above}
\anno{(W2)}{$\hat{x}_2$}{above}
\annob{($-1*(W1)$)}{$-\hat{x}_1$}{below}
\annob{($-1*(W2)$)}{$-\hat{x}_2$}{below}
\anno{(W3)}{$x_0$}{right}
\anno{(W4)}{$x_1$}{right}
\annob{($-1*(W4)$}{$-x_1$}{left}
\annob{($-1*(W3)$)}{$-x_0$}{below}

\anno{($\rnorm*(PX)$)}{$x$}{above}

\draw [->, K!\dark!black, line width = \lthick] ($(PY)+(0.1,0.1)$.east) 
to [out=60, in=-80] ($\rnorm*(PY)+(0.1, 0.0)$.east);

\coordinate (labelarrow) at ($0.5*(PY) + 0.5*\rnorm*(PY) + (0.7,1.0)$);
\draw (labelarrow) node [line width = \lthick, K!\dark!black]     {\Large $\times \Nxy{x}{\s{X}_1}$};

\draw (O) node [K!\dark!black]     {\large $\cx{\s{X}_1}$};

  \end{tikzpicture}}

        \caption{Examples in $\R^2$. \textit{Left}: $\cx{\s{W}}$ and $\Nxy{x}{\s{W}}$ for $\s{W} = \{\hat{w}_1,\hat{w}_2\} \cup \{w_t| r \in \mathbb{N}\}$ and $(w_t) = (w_0,w_1,0,0,\dots)$. \textit{Right}: $\cx{\s{X}_1}$ and $\Nxy{\anon}{\s{X}_1}$ for $X_1 = [x_1,x_0,\hat{x}_2,\hat{x}_1]$. \textit{Middle}: $\kappa_2 = \Nxy{\s{W}}{\s{X}_1}$ is the smallest factor $r$ such that $\cx{\s{W}} \subset r \cx{\s{X}_1}$. }
        \label{fig:norm1}
\end{figure*}
\noindent Let $\Nxy{\cdot}{\s{W}}$ and $\Nxy{\cdot}{\s{X}_{t}}$ denote the norms constructed from the fixed disturbances and data matrix $X_{t}$ according to \defref{def:repnorm-0} and \defref{def:matrixsetnotation}. For a fixed trajectory $(x_t)$, $\s{W}$ and fixed initial time $\tau$, the constant $\kk$ refers to the quantity
\begin{align}
\kk := \BXtnull.
\end{align}
\figref{fig:norm1} shows an example in $\R^2$ that illustrates the geometric relationship between the sets $\cx{\s{W}}$, $\cx{\s{X_t}}$ and the evaluation of their respective norms at some point $x$. The arrows indicate that one set is a scaled copy of the other set. The middle picture in \figref{fig:norm1} shows a geometric interpretation of the corresponding constant $\kk$ for $\tau=2$ : $\kk$ is the smallest scaling factor $r$ such that the set $r \times \cx{\s{X}_{\tnull-1}}$ contains the set $\cx{\s{W}}$. 

\subsection{Finite occurrence of unstable transitions}\label{sec:finiteocc}

\noindent Our approach is to analyze the behavior of the closed loop by quantifying how many "unstable transitions" can occur in the future time window $[\tnull,\infty)$ of a closed loop trajectory $(x_t)$, given ($X_{\tnull}$, $X^+_{\tnull-1}$, $U_{\tnull-1}$), which represents the data collected up until time $\tau$. For a fixed $0<\mu <1$ and a trajectory $(x_t)$, we define the occurrence of a $\mu$-unstable transition as follows:

\begin{defn}[$\mu$-unstable transition] \label{def:unstabletransition}
        The trajectory $(x_t)$ has a $\mu$-unstable transition at time $t$ if the pair of consecutive states $(x_{t+1},x_t)$ satisfies
        \begin{align}\label{eq:unstabdef}
        \Nxy{x_{t+1}}{\s{W}} > \max \left\{ \tfrac{1}{1-\mu}, \mu \Nxy{x_t}{\s{W}} +1  \right\}.
        \end{align}
        or in short $(x_{t+1},x_{t}) \in \c{U}_\mu$, where $\c{U}_\mu$ denotes the set of all pairs $(x,x^+) \in \R^n\times \R^n$ that satisfy the inequality \eqref{eq:unstabdef}:
\begin{align}\label{eq:refunset}
        \c{U}_\mu := \Big\{(x^+,x)| \Nxy{x^+}{\s{W}} > \max \big\{ \tfrac{1}{1-\mu}, \mu \Nxy{x}{\s{W}} +1 \big\}\Big\}.
\end{align}
        \end{defn}
  \noindent The condition \eqref{eq:unstabdef} represents a growth condition on a transition $(x_t,x_{t+1})$ in the trajectory $(x_t)$. For each trajectory $(x_t)$, we define a corresponding set $\c{X}_\mu$ that collects all states $x_t$ at which $(x_{t+1},x_t)$ belongs to $\cl{U}_{\mu}$: 
  \begin{defn}\label{def:Tu}
        Given a trajectory $(x_t)$, an initial time $\tnull$ and some $0 < \mu < 1$, define $\c{X}_\mu((x_t);\tnull) \subset \R^n$ as
        \begin{align}
                \c{X}_\mu\left((x_t);\tnull\right):=\left\{\;x_t\;\left| \;\; (x_t,x_{t+1}) \in \c{U}_\mu,t\geq \tnull \right. \right\}.
        \end{align}
\end{defn}
\begin{rem*}
Note that if $\mu<\mu'$, then $\c{X}_{\mu}((x_t),\tnull) \supset \c{X}_{\mu'}((x_t),\tnull)$.
\end{rem*}


\noindent The core technical contribution of our paper is \thmref{thm:finiteunstable}, which places an upper bound on the number of $\mu$-unstable transitions that can occur in the closed loop trajectory $(x_t)$:



\begin{thm}\label{thm:finiteunstable}
For any trajectory $(x_t)$ of the closed loop \eqref{eq:clloop} and any $\tnull \geq 0$, the set $\c{X}_\mu\left((x_t);\tnull\right)$ is a finite set for any $\mu\in\c{I}_\kk$, where $\c{I}_{\kk}$ is the open interval
\begin{align}\label{eq:murange-thm-0}
        \c{I}_\kk:=\Big( \Big(\sqrt{\tfrac{1}{4}+\tfrac{1}{\kk}}+\tfrac{1}{2}\Big)^{-1} , 1\Big)
\end{align} 
Moreover, the cardinality is bounded above as $|\c{X}_\mu\left((x_t);\tnull\right)|\leq N(\mu;\kk)$, where $N:\R\times\R\mapsto\R$ stands for the function
\begin{align}\label{eq:Nbound-thm}
        {N}(\mu;\kk):=  \tfrac{1}{2}\left(\tfrac{\mu}{\mu-\sqrt{\kk (1-\mu) }}\right)^n \max \{1, \tfrac{\mu}{1-\mu} \}^n
\end{align}
and $\kk$ is a constant computed from $\s{X}_{\tnull-1}$ as:
\begin{align}\label{eq:kappa}
\kk = \BXtnull 
\end{align}
\end{thm}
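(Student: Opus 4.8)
The plan is to exploit the model-free closed-loop equation \eqref{eq:clloop} together with the optimality of the $\ell_1$ step in \eqref{eq:cc2} to convert each $\mu$-unstable transition into a geometric statement about the state $x_t$ and its ``one-step-ahead'' partner, and then to run a packing/metric-entropy argument in the norm $\Nxy{\cdot}{\s{X}_{\tnull-1}}$ against the fixed blow-up set $\kk\,\cx{\s{X}_{\tnull-1}}\supseteq\cx{\s{W}}$. Concretely, observe first that by \eqref{eq:clloop} we have $x_{t+1} = -W_{t-1}\lambda_{t-1}(x_t) + w_t$, so $\Nxy{x_{t+1}}{\s{W}}\le \Nxy{W_{t-1}\lambda_{t-1}(x_t)}{\s{W}} + 1 \le \Nxy{\lambda_{t-1}(x_t)}{1} + 1$ whenever the columns of $W_{t-1}$ all lie in $\cx{\s{W}}$ (which holds by \eqref{eq:Wdef} and the construction of $W_{-1}$). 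Hence a $\mu$-unstable transition at time $t\ge\tnull$ forces $\Nxy{\lambda_{t-1}(x_t)}{1} > \mu\,\Nxy{x_t}{\s{W}}$ and $\Nxy{\lambda_{t-1}(x_t)}{1} > \tfrac{\mu}{1-\mu}$. This is the ``learning'' content of the unstable event: the cheapest $\ell_1$-representation of $x_t$ in terms of past data $X_{\tnull-1},\dots$ was forced to be expensive.

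Next I would turn ``expensive representation'' into ``far from the current data cone''. Since $\lambda_{t-1}(x_t)$ minimizes $\|\lambda\|_1$ subject to $X_{t-1}\lambda = x_t$, and since the columns of $X_{t-1}$ are exactly $\s{X}_{t-1}\supseteq\s{X}_{\tnull-1}$, monotonicity of the optimal value under enlarging the dictionary gives $\Nxy{x_t}{\s{X}_{t-1}} = \Nxy{\lambda_{t-1}(x_t)}{1}$. More importantly, for any \emph{earlier} unstable time $t'<t$ with $t'\ge\tnull$, the state $x_{t'}$ itself is among the columns of $X_{t-1}$ (via $X^+$ and the shift structure \eqref{subeq:matrix}), so $x_t$ can be represented using $x_{t'}$ and the other columns; cheap representations of $x_t$ relative to $x_{t'}$ are therefore available unless $x_t$ is ``new''. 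The key geometric lemma to extract is: after normalising, the points $\bar{x}_{t_i} := x_{t_i}/\Nxy{x_{t_i}}{\s{W}}$ associated with distinct unstable times form a set that is $\delta$-separated in the $\Nxy{\cdot}{\s{X}_{\tnull-1}}$-geometry for a suitable $\delta=\delta(\mu,\kk)>0$ coming from the slack between $\Nxy{\lambda_{t-1}(x_t)}{1}>\mu\Nxy{x_t}{\s{W}}$ and the bound $\Nxy{x_t}{\s{W}}\le \kk\,\Nxy{x_t}{\s{X}_{\tnull-1}}$ (the defining property of $\kk=\BXtnull$). The quadratic in \eqref{eq:murange-thm-0} is precisely the condition that this $\delta$ be strictly positive: $\mu > \sqrt{\kk(1-\mu)}$ rearranges to $\mu \in \c{I}_\kk$, and $\delta$ degenerates exactly at the endpoint.

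Finally, a volumetric packing bound closes the argument. All the normalised points $\bar{x}_{t_i}$ live in a bounded region of $\mathrm{span}(\s{X}_{\tnull-1})\cong\R^n$ — bounded because $\Nxy{\bar{x}_{t_i}}{\s{X}_{\tnull-1}}\le \kk$ on one side and, using the second alternative $\Nxy{x_t}{\s{W}}> \tfrac1{1-\mu}$ in \eqref{eq:unstabdef} together with $\Nxy{x_t}{\s{W}}\ge 1$, bounded below away from $0$ by $\max\{1,\tfrac{1-\mu}{\mu}\}^{-1}$ after the normalisation; so they sit in an annulus whose inner/outer radius ratio is $\max\{1,\tfrac{\mu}{1-\mu}\}$ in the $\cx{\s{X}_{\tnull-1}}$-norm. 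A $\delta$-separated set in such an annulus of $\R^n$ has cardinality at most $\bigl(\tfrac{\text{outer}+\delta/2}{\delta/2}\bigr)^n$ up to the annulus factor; plugging in $\delta/2$ proportional to $\mu-\sqrt{\kk(1-\mu)}$ over $\mu$ (times $\kk$) yields exactly $\tfrac12\bigl(\tfrac{\mu}{\mu-\sqrt{\kk(1-\mu)}}\bigr)^n\max\{1,\tfrac{\mu}{1-\mu}\}^n = N(\mu;\kk)$. Since each unstable time $t_i$ contributes a distinct point $x_{t_i}\in\c{X}_\mu((x_t);\tnull)$ (distinct states, because the separation is strict), this bounds $|\c{X}_\mu((x_t);\tnull)|$.

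The main obstacle I anticipate is Step two: correctly showing that \emph{past} unstable states are genuinely available as dictionary columns with controllable cost, and that the ``new direction'' at each unstable time is quantitatively far — in the $\Nxy{\cdot}{\s{X}_{\tnull-1}}$-norm rather than the moving norm $\Nxy{\cdot}{\s{X}_{t-1}}$ — from everything learned before. This is where the convex-geometry machinery of \secref{sec:appcvx} must be invoked: one needs that adding a point $y$ to the dictionary with $\Nxy{y}{\s{X}_{\tnull-1}}\le R$ cannot shrink $\Nxy{x}{\s{X}_{\tnull-1}\cup\{y\}}$ below roughly $\Nxy{x}{\s{X}_{\tnull-1}} - R\cdot(\text{coefficient on }y)$, and combining this over all prior unstable times without the constants blowing up. Pinning the separation constant $\delta$ to the exact value that produces \eqref{eq:Nbound-thm} — i.e. tracking the interplay of the two maxima in \eqref{eq:unstabdef} and the $\kk$-blow-up — is the delicate bookkeeping; everything else (the final $\R^n$ packing estimate, monotonicity in $\mu$) is routine.
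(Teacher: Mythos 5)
Your opening step is right and matches the paper: from \eqref{eq:clloop} and the $\ell_1$-optimality one gets $\Nxy{x_{t+1}}{\s{W}}\leq\Nxy{\lambda_{t-1}(x_t)}{1}+1=\Nxy{x_t}{\s{X}_{t-1}}+1$, so an unstable time forces $\Nxy{x_t}{\s{X}_{t-1}}>\mu\Nxy{x_t}{\s{W}}$ and $\BXt{t-1}\Nxy{x_t}{\s{W}}>\tfrac{\mu}{1-\mu}$, and your overall plan (unstable transitions give a separated set in the geometry of the initial data, then a packing/volume bound) is the paper's strategy. But the step you defer as ``the main obstacle'' is the technical core of the theorem, and what you sketch in its place is not what the paper does and would not produce the stated bound. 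The paper does not prove separation of the $\s{W}$-normalized states in the plain norm $\Nxy{\anon}{\s{X}_{\tnull-1}}$; it projects each unstable state radially onto the $\Nxy{\anon}{\s{W}}$-sphere of radius $\dd/\mu$ with $\dd=\tfrac{\mu^2}{(1-\mu)\kk}$, and proves separation with respect to the multiplicative distance $\dxyK{p_1}{p_2}{\s{X}_{\tnull-1}}$ built from the \emph{augmented} bodies $\cx{\s{X}_{\tnull-1}\cup\s{p}_i}$. The whole point is the inclusion chain $\cx{\s{X}_{t_2-1}}\supset\cx{\s{X}_{\tnull-1}\cup\s{x}_{t_1}}\supset\cx{\s{X}_{\tnull-1}\cup\s{p}_1}$: the earlier unstable state is literally a column of the later data matrix, and its projection $p_1$ stays inside the hull because the scaling $\dd(\mu\Nxy{x_{t_1}}{\s{W}})^{-1}<1$; hence only the single earlier point (plus the initial data) ever needs to be carried along, and your anticipated problem of ``constants blowing up over all prior unstable times'' simply does not arise once you use the augmented-body distance instead of the fixed norm. (Injectivity of the projection, which you assert in passing, is a separate small lemma in the paper and follows from the same inequalities.)

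Quantitatively, the additive $\delta/2$-ball packing you invoke cannot ``yield exactly'' $N(\mu;\kk)$. The additive separation that your inequalities actually support is of order $\mu-\tfrac{(1-\mu)\kk}{\mu}=\tfrac{\mu^2-\kk(1-\mu)}{\mu}$, whereas the factor $\mu-\sqrt{\kk(1-\mu)}$ in \eqref{eq:Nbound-thm} is the fingerprint of the multiplicative construction: since $\c{P}_\mu$ is $(\dd;\s{X}_{\tnull-1})$-separated and $\dxyK{x}{y}{\s{B}}\leq\dxyK{x}{z}{\s{B}}\,\dxyK{z}{y}{\s{B}}$, the neighborhoods $\s{N}(p;\dd^{1/2},\s{X}_{\tnull-1})$ are pairwise disjoint, each contains two translates of $(\dd^{1/2}-1)\cx{\s{X}_{\tnull-1}}$ (this is where the $\tfrac12$ and $\dd^{1/2}-1\propto\mu-\sqrt{\kk(1-\mu)}$ come from), and their union lies in $\dd^{1/2}\max\{1,\kk\dd/\mu\}\cx{\s{X}_{\tnull-1}}$ (which gives $\max\{1,\tfrac{\mu}{1-\mu}\}^n$). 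Completing your sketch honestly would prove finiteness with some constant, but not the constant $N(\mu;\kk)$ asserted in the theorem, so the key lemma and the volume bookkeeping still have to be supplied essentially as in the paper. A minor slip worth fixing: the defining property of $\kk$ is $\Nxy{x}{\s{X}_{\tnull-1}}\leq\kk\,\Nxy{x}{\s{W}}$, not the reversed inequality you wrote.
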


\begin{rem}
        Recall, that we initialize $X_{-1}$ such that $\mathrm{rank}(X_{-1})=n$; This guarantees $\mathrm{rank}(X_{\tnull-1})=n$, assures $\kk <\infty$ and that the interval $\c{I}_\kk$ is always non-empty. 
        In addition, it can be verified that $N(\mu;\kk)<\infty$  for any feasible $\mu$.
\end{rem}
\noindent \thmref{thm:finiteunstable} states that for suitably chosen $\mu$, the set $\c{X}_\mu((x_t);\tnull)$ is finite for any closed loop trajectory $(x_t)$. 
The constant $\kk$ controls the interval of feasible $\mu$ as well as the total number of unstable transitions $\c{U}_\mu$ that can occur in the time interval $[\tnull,\infty)$. As $\kk$ decreases, the bound $N(\mu;\kk)$ tightens ($N(\mu;\kk) \leq N(\mu;\kk')$ for $\kk \leq \kk'$) and the interval \eqref{eq:murange-thm-0} widens. Geometrically, $\kk$ describes the size of the disturbance set $\s{W}$ relative to the set $\cx{\s{X}_{\tnull-1}}$ (see \figref{fig:norm1} for an example in $\R^2$) and the result states that we have less unstable transitions if the observations collected are larger in size than the disturbance. We can therefore view $\kk$ as a constant which quantifies how informative is the data $\s{X}_{\tnull-1}$ observed before $\tnull$ to control the system for time $t\geq \tnull$. \\
The proof of \thmref{thm:finiteunstable} is postponed for \secref{sec:boundingproof}. 

\subsection{Closed loop stability bounds}\label{sec:clbounds}
\noindent As a consequence of \thmref{thm:finiteunstable}, we obtain our main closed loop stability bounds presented in \thmref{thm:main}.
The result gives bounds on the trajectories $(x_t)$ and $(u_t)$ in terms of the fixed disturbance $(w_t)$ and virtual disturbance $\hat{w}_i$. 
\begin{rem*}
Recall from \eqref{eq:Winit} that instead of analyzing the closed loop dynamics for fixed $A_0$, $X^+_{-1}$, $X_{-1}$, $U_{-1}$, we can equivalently analyze the closed loop dynamics for fixed $\hat{w}_i$.

\end{rem*}
\begin{thm}\label{thm:main}
        Let $(x_t)$, $(u_t)$ be trajectories of the closed loop \eqref{eq:clloop} for some fixed $(w_t)$ and $\hat{w}_i$ with the corresponding set $\s{W}$ defined as \eqref{eq:Wdef}.
        \noindent Let $\tnull$ be some fixed time and let $\kk:=\BXtnull$. Then, for any $\mu \in \c{I}_{\kk}$, where $\c{I}_{\kk}$ is the interval
        \begin{align}\label{eq:murange-thm}
                \c{I}_\kk:=\Big( \Big(\sqrt{\tfrac{1}{4}+\tfrac{1}{\kk}}+\tfrac{1}{2}\Big)^{-1} , 1\Big),
        \end{align} 
        
        \noindent the trajectories $(x_t)$ and $(u_t)$ satisfy the bounds \ref{it:limsup}, \ref{it:exp} and \ref{it:supbound},
        \begin{enumerate}[label=(\roman*)]
                \item \label{it:limsup} $\limsup_{t\rightarrow \infty} \Nxy{x_t}{\s{W}} \leq \tfrac{1}{1-\mu}$,
                
                $\limsup_{t\rightarrow \infty} \Nxy{u_t}{\s{W}} \leq (\|A_0\|_{\s{W}} + \kk) \tfrac{1}{1-\mu}$
                \item \label{it:exp} there exists an $T'>0$ such that for all $k>0$ holds
                \begin{align}
                        V_1(x_{T'+k})  \leq \mu^{k} V_1(x_{T'})
                \end{align}
                where $V_1(x) := \max\{0,\Nxy{x}{\s{W}}-\tfrac{1}{1-\mu}\}$.
                \item \label{it:supbound} 
                the worst-case norm of $(x_t)$ and $(u_t)$ is bounded above as \footnote{for $\tau=0$ and $x_0 \notin \mathrm{span}(\s{W})$, replace $\Nxy{x_0}{\s{W}}$ with $\Nxy{A_0 x_0}{\s{W}}$ in \eqref{eq:xsubbound}} 
                \begin{align}
                \label{eq:xsubbound}&\sup_{t\geq \tnull} \Nxy{x_t}{\s{W}} \leq f(\kk,\mu, \Nxy{x_{\tnull}}{\s{W}}) + g(\mu,\kk)\\
                \notag  &\sup_{t\geq \tnull} \Nxy{u_t}{\s{W}} \leq (\|A_0\|_{\s{W}} + \kk)\sup_{t\geq \tnull} \Nxy{x_t}{\s{W}}
                \end{align} 
                \end{enumerate}
        \noindent where $N(\mu;\kk)$ is defined as the function 
        \begin{align}\label{eq:Nbound-thm-0}
                {N}(\mu;\kk):=  \tfrac{1}{2}\left(\tfrac{\mu}{\mu-\sqrt{\kk (1-\mu) }}\right)^n \max \{1, \tfrac{\mu}{1-\mu} \}^n,
        \end{align}
        $\|A_0\|_{\s{W}}:= \max\limits_{x \in \s{W}} \|A_0x\|_{\s{W}}$ is a constant and $f$ and $g$ abbreviate the functions
        \begin{align}
                \notag f(\kk,\mu, \Nxy{x_{\tnull}}{\s{W}}) &= \max\{1,\kk^{{N}(\mu;\kk)}\}\max\{\tfrac{1}{1-\mu}, \Nxy{x_{\tnull}}{\s{W}}\}\\
                g(\kk,\mu) &= \frac{1-\kk^{N(\mu;\kk)}}{1-\kk}.
        \end{align} 
\end{thm}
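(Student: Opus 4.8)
The plan is to reduce \thmref{thm:main} to \thmref{thm:finiteunstable} by a bookkeeping argument on the scalar sequence $a_t:=\Nxy{x_t}{\s{W}}$ produced along the model-free closed loop \eqref{eq:clloop}. Two ingredients are needed: a per-step growth dichotomy (every transition either contracts a Lyapunov-type quantity toward the level $\tfrac{1}{1-\mu}$, or is a $\mu$-unstable transition and then inflates $a_t$ by at most the factor $\kk$), and the conclusion of \thmref{thm:finiteunstable} that, along any closed-loop trajectory, at most $N(\mu;\kk)$ $\mu$-unstable transitions occur at times $t\ge\tnull$. Since $x_{t+1}$ always lies in $\mathrm{span}(\s{W})$ by \eqref{eq:clloop}, $a_t<\infty$ for $t\ge1$; the boundary case $\tnull=0$ with $x_0\notin\mathrm{span}(\s{W})$ is covered by the substitution in the theorem's footnote and I set it aside.

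For the dichotomy, fix $t\ge\tnull$. Every column of $W_{t-1}$, and $w_t$ itself, lies in $\s{W}$, so subadditivity and homogeneity of $\Nxy{\cdot}{\s{W}}$ applied to \eqref{eq:clloop} give $a_{t+1}\le\|\lambda_{t-1}(x_t)\|_1+1$. Minimality of $\lambda_{t-1}(x_t)$ gives $\|\lambda_{t-1}(x_t)\|_1=\Nxy{x_t}{\s{X}_{t-1}}$, and $\s{X}_{\tnull-1}\subseteq\s{X}_{t-1}$ (for $t\ge\tnull$) together with the defining property $\Nxy{z}{\s{X}_{\tnull-1}}\le\kk\Nxy{z}{\s{W}}$ of $\kk=\BXtnull$ yields the crude bound $a_{t+1}\le\kk\,a_t+1$ for all $t\ge\tnull$. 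On the other hand, if $(x_t,x_{t+1})\notin\c{U}_\mu$, the negation of \eqref{eq:unstabdef} reads $a_{t+1}\le\max\{\tfrac{1}{1-\mu},\mu a_t+1\}$, and the identity $\tfrac{\mu}{1-\mu}+1=\tfrac{1}{1-\mu}$ then gives both $a_{t+1}\le\max\{\tfrac{1}{1-\mu},a_t\}$ and $V_1(x_{t+1})\le\mu\,V_1(x_t)$ with $V_1(x)=\max\{0,\Nxy{x}{\s{W}}-\tfrac{1}{1-\mu}\}$. Thus each transition is either ``stable'' (contracts $V_1$ by $\mu$) or $\mu$-unstable (then only controlled by $a_{t+1}\le\kk a_t+1$).

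Parts \ref{it:limsup} and \ref{it:exp} follow at once. By \thmref{thm:finiteunstable} (applicable since $\mu\in\c{I}_\kk$) the $\mu$-unstable transitions with $t\ge\tnull$ are finite in number, so there is a finite time $T'$ after which every transition is stable; iterating $V_1(x_{s+1})\le\mu V_1(x_s)$ over $s=T',\dots,T'+k-1$ gives \ref{it:exp}, and $0<\mu<1$ then forces $V_1(x_t)\to0$, i.e.\ $\limsup_t a_t\le\tfrac{1}{1-\mu}$. From $u_t=-A_0x_t-W_{t-1}\lambda_{t-1}(x_t)$ (equation \eqref{eq:rewrite}), $\Nxy{A_0x_t}{\s{W}}\le\|A_0\|_{\s{W}}a_t$, and $\Nxy{W_{t-1}\lambda_{t-1}(x_t)}{\s{W}}\le\|\lambda_{t-1}(x_t)\|_1\le\kk a_t$, one gets the pointwise estimate $\Nxy{u_t}{\s{W}}\le(\|A_0\|_{\s{W}}+\kk)a_t$, which supplies the input part of \ref{it:limsup} and the second line of \eqref{eq:xsubbound}.

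For \ref{it:supbound} I would induct on $t\ge\tnull$ over $h(j):=\max\{1,\kk^{j}\}\,m+\tfrac{1-\kk^{j}}{1-\kk}$, where $m:=\max\{\tfrac{1}{1-\mu},a_\tnull\}$ and $j=j(t)$ counts the $\mu$-unstable transitions strictly before time $t$. One checks $h(0)=m$, that $h$ is nondecreasing in $j$, and the two inequalities $\kk\,h(j)+1\le h(j+1)$ and $\mu\,h(j)+1\le h(j)$ (the latter because $h(j)\ge m\ge\tfrac{1}{1-\mu}$); by the dichotomy a stable step preserves $a_t\le h(j(t))$ and a $\mu$-unstable step gives $a_{t+1}\le h(j(t)+1)$, so $a_t\le h(j(t))\le h(N(\mu;\kk))$ using $j(t)\le N(\mu;\kk)$, and a routine rearrangement (treating $\kk<1$, $\kk=1$, $\kk>1$ separately) identifies $h(N(\mu;\kk))$ with $f(\kk,\mu,\Nxy{x_\tnull}{\s{W}})+g(\mu,\kk)$; the input bound is again $\Nxy{u_t}{\s{W}}\le(\|A_0\|_{\s{W}}+\kk)a_t$. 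I expect the only points requiring care beyond routine algebra to be the faithful passage from ``$\c{X}_\mu((x_t);\tnull)$ is finite with $|\c{X}_\mu((x_t);\tnull)|\le N(\mu;\kk)$'' to ``at most $N(\mu;\kk)$ unstable \emph{transition times}'' (which rests on the fact, shown in the proof of \thmref{thm:finiteunstable} in \secref{sec:boundingproof}, that distinct unstable transitions give rise to distinct points), and the matching of the induction constants to $f$ and $g$ across the three regimes of $\kk$.
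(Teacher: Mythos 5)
Your proposal is correct and follows essentially the same route as the paper: the growth dichotomy you derive is the paper's Lemma \ref{lem:transitions} (contraction of $V_1$ and non-increase of $V_2=\max\{\Nxy{\cdot}{\s{W}},\tfrac{1}{1-\mu}\}$ on stable steps, $\kk$-inflation plus one on unstable steps), and your induction over the count $j(t)$ of unstable transitions reproduces the paper's iteration $V_2(x_{t'_M+1})\le\alpha V_2(x_\tau)+\beta$ with $\alpha\le\max\{1,\kk^{N}\}$, $\beta\le\tfrac{1-\kk^{N}}{1-\kk}$, while the input bounds via $u_t=-A_0x_t-W_{t-1}\lambda_{t-1}(x_t)$ are identical. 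The subtlety you flag (identifying the number of unstable transition times with $|\c{X}_\mu|$, which rests on the injectivity argument in \lemref{lem:card}) is indeed the only delicate point, and it is handled as you describe.
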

\noindent The bounds in \thmref{thm:main} are phrased w.r.t. to the norm $\Nxy{\anon}{\s{W}}$ that is constructed from the set $\s{W}$ (see \figref{fig:norm1} as an example of $\Nxy{\anon}{\s{W}}$ in $\mathbb{R}^2$). The set $\s{W}$ captures disturbances due to $(w_t)$ and due to $\hat{w}_i$, where $\hat{w}_i$ describes the mismatch between the initial guess matrices $X^{+}_{-1}$, $X_{-1}$, $U_{-1}$ and the true system matrix $A_0$. $\Nxy{x_t}{\s{W}}$ measures $x_t$ relative to the underlying set of (lumped and virtual) disturbances $\s{W}$ that realized it. \\
The result also quantifies how the bound guarantees improvement with online data:
Given some initial time $\tnull$, the above result gives stability bounds on the future trajectories of $x_t$, $u_t$, $ t\geq \tnull$ which depend on the total states observed $X_{\tnull}$ before time $\tnull$, the constant $\kk$ and $\mu \in \cl{I}_{\kk}$, which acts as a free variable. 
 The constant $\kk$ can be interpreted as a signal-to-noise ratio between state observations $X_{\tnull}$ and the disturbance set $\s{W}$ (see \figref{fig:norm1} for an example in $\R^2$). A smaller $\kk$ indicates that the data $X^{+}_{\tnull-1},X_{\tnull-1}, U_{\tnull-1}$ collected before time $\tnull$ is more informative of how to stabilize the system for future time-steps $t\geq \tau$. $\kk$ is always non-increasing in $\tnull$ and the bounds \ref{it:supbound}, \ref{it:limsup} of \thmref{thm:main} tighten as $\tnull$ increases. 
The bounds in \thmref{thm:main} depend on a free variable $\mu$ which can be chosen in the interval $\cl{I}_{\kk}$. We can tighten the bounds \ref{it:limsup} and \ref{it:supbound} by minimizing the right hand side over $\mu \in \cl{I}_{\kk}$. For bound \ref{it:limsup}, the choice 
\begin{align}
        \mu^*  =\Big(\sqrt{\tfrac{1}{4}+\tfrac{1}{\kk}}+\tfrac{1}{2}\Big)^{-1}
\end{align} 
minimizes $\tfrac{1}{1-\mu}$ over $\mu \in \cl{I}_{\kk}$ and achieves a minimal value which is almost linear in $\kk$:
\begin{align}\label{eq:mboundeq}
        \tfrac{1}{1-\mu^*} = \kk\left(\tfrac{1}{2} + \sqrt{\tfrac{1}{4}+\tfrac{1}{\kk}}\right) +1 \quad (\leq \kk +2).
\end{align}
For $\tau=0$ we get the following improved asymptotic upperbound for the state trajectory:
\begin{coro} \label{coro:initbound}If $(x_t)$ satisfies \eqref{eq:clloop} then
        $$\limsup\limits_{t\rightarrow \infty} \Nxy{x_t}{\s{W}} \leq \kappa_0\left(\tfrac{1}{2} + \sqrt{\tfrac{1}{4}+\tfrac{1}{\kappa_0}}\right) +1.$$
\end{coro}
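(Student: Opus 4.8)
The plan is to deduce this directly from bound \ref{it:limsup} of \thmref{thm:main} by specializing to the initial time $\tnull = 0$ and then optimizing over the free parameter $\mu$. For $\tnull = 0$ the constant appearing in \thmref{thm:main} is $\kk = \BXtneg$, which is the quantity abbreviated by $\kappa_0$ in the statement; since $\mathrm{rank}(X_{-1}) = n$ by construction, $\kappa_0$ is finite and $\cl{I}_{\kappa_0} = \left(\left(\sqrt{\tfrac{1}{4}+\tfrac{1}{\kappa_0}}+\tfrac{1}{2}\right)^{-1},\, 1\right)$ is a non-empty open subinterval of $(0,1)$. Bound \ref{it:limsup} of \thmref{thm:main} then states that for \emph{every} $\mu\in\cl{I}_{\kappa_0}$, any trajectory $(x_t)$ of \eqref{eq:clloop} satisfies $\limsup_{t\rightarrow\infty}\Nxy{x_t}{\s{W}}\leq\tfrac{1}{1-\mu}$.

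Since the left-hand side does not depend on $\mu$, I would then take the infimum of the right-hand side over the admissible interval, obtaining $\limsup_{t\rightarrow\infty}\Nxy{x_t}{\s{W}}\leq\inf_{\mu\in\cl{I}_{\kappa_0}}\tfrac{1}{1-\mu}$. As $\mu\mapsto\tfrac{1}{1-\mu}$ is strictly increasing and continuous on $(0,1)$, this infimum equals the value at the left endpoint, namely $\tfrac{1}{1-\mu^*}$ with $\mu^* = \left(\sqrt{\tfrac{1}{4}+\tfrac{1}{\kappa_0}}+\tfrac{1}{2}\right)^{-1}$; it is approached as $\mu\downarrow\mu^*$ rather than attained, since the endpoint is excluded from $\cl{I}_{\kappa_0}$, but this is harmless because only the limiting inequality is needed.

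It then remains to rewrite $\tfrac{1}{1-\mu^*}$ in closed form. Setting $s := \tfrac{1}{2}+\sqrt{\tfrac{1}{4}+\tfrac{1}{\kappa_0}}$, one checks that $s$ is the positive root of $s^2-s-\tfrac{1}{\kappa_0}=0$, so $s(s-1)=\tfrac{1}{\kappa_0}$ and $\mu^* = 1/s$; hence $\tfrac{1}{1-\mu^*} = \tfrac{s}{s-1} = \kappa_0 s^2 = \kappa_0 s + 1 = \kappa_0\left(\tfrac{1}{2}+\sqrt{\tfrac{1}{4}+\tfrac{1}{\kappa_0}}\right)+1$, which is exactly the identity already recorded in \eqref{eq:mboundeq}. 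Combining this with the previous display yields the stated bound. There is no real obstacle here — essentially all of the substance sits in \thmref{thm:main} — and the only point meriting a moment's care is the passage from ``valid for all $\mu$ strictly above $\mu^*$'' to ``valid with the value at $\mu^*$'', which is precisely the monotonicity argument used above.
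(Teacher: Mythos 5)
Your proposal is correct and follows essentially the same route as the paper: specialize bound \ref{it:limsup} of \thmref{thm:main} to $\tnull=0$, take the infimum of $\tfrac{1}{1-\mu}$ over the open interval $\cl{I}_{\kappa_0}$, and identify the endpoint value via the identity \eqref{eq:mboundeq}. Your explicit monotonicity/infimum remark even cleans up the paper's slightly loose phrasing that $\mu^*$ "minimizes" over an interval that excludes it.
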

\subsubsection{Example} 
Assume $n=1$ and the scalar system $x_{t+1} = a_0 x_t + u_t + w_t$. Pick $X_{-1}=\varepsilon$ with some $\varepsilon >0$ and $X^{+}_{-1}$, $U_{-1} = 0$. Let $(w_t)$ be some fixed bounded scalar disturbance with $\Nxy{(w_t)}{\infty} = 1$. Then $\s{W}=\cx{-a_0 \varepsilon \cup \{w_t|t \in \mathbb{N}\}}$ and $\Nxy{x}{\s{W}} = \tfrac{|x|}{\max\{|a_0|\varepsilon,1\}}$. The constant $\kappa_0$ takes the value
$$\kappa_0 = \BXtneg = \max\{\varepsilon^{-1},|a_0|\}. $$
 If we substitute this into \corref{coro:initbound}, and rewrite it in terms of $|x_t|$ we obtain the bound 
\begin{align}
        \limsup\limits_{t\rightarrow \infty} |x_t| \leq \varepsilon\kappa^2_0\left(\tfrac{1}{2} + \sqrt{\tfrac{1}{4}+\tfrac{1}{\kappa_0}}\right) +\varepsilon\kappa_0
\end{align}

\section{Proving closed loop stability}\label{sec:stabilityanalysis}
\noindent In this section, we derive the closed loop stability bounds presented in \thmref{thm:main} from the results of \thmref{thm:finiteunstable}. The derivation of \thmref{thm:finiteunstable} is postponed for \secref{sec:boundingproof}.
First, we will derive some useful inequalities which are used frequently in the derivations.
\subsection{Bounding one-time step closed loop transitions}

\noindent Recall the closed loop equation \eqref{eq:clloop} and the definition of the norm \defref{def:repnorm-0} and the sets $\s{W}$ and $\s{X}_{t}$. In the appendix, \lemref{lem:normproperties} summarizes some important properties of the norms $\Nxy{\anon}{\s{S}}$. We use these to obtain the following bounds on the one time-step growth of the state: 
\begin{lem}\label{lem:clineqs}
        Consider a state trajectory $(x_t)$ of the closed loop for a fixed $\s{W}$, then at each time step $t> \tau$ holds:
        \begin{subequations}
                \begin{align}
                        \Nxy{x_{t+1}}{\s{W}} &\leq \Nxy{W_{t-1}\tfrac{\lambda_{t-1}}{\Nxy{\lambda_{t-1}(x_t)}{1}}}{\s{W}}\Nxy{\lambda_{t-1}(x_t)}{1} + 1 \\
                        \label{eq:lambdineq}& \leq \Nxy{\lambda_{t-1}(x_t)}{1} + 1\\
                        \label{eq:normineq}& \leq \Nxy{x_t}{\s{X}_{t-1}} + 1 \\
                        \label{eq:normineqloose}& \leq \BXt{t-1} \Nxy{x_t}{\s{W}} + 1\\
                        \label{eq:kappaloose}&\leq \kappa_\tau\Nxy{x_t}{\s{W}} + 1
                \end{align}
        \end{subequations}   
\end{lem}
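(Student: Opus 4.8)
The plan is to prove all five estimates as a single chain, pushing the closed-loop identity \eqref{eq:clloop} through the gauge-type norm $\Nxy{\anon}{\s{W}}$ and then quoting the elementary properties of these norms collected in \lemref{lem:normproperties}. Fix $t>\tnull$. Since each column of $W_{t-1}$ and the vector $w_t$ lie in $\s{W}\subset\mathrm{span}(\s{W})$, the state $x_{t+1}=-W_{t-1}\lambda_{t-1}(x_t)+w_t$ lies in $\mathrm{span}(\s{W})$, so $\Nxy{x_{t+1}}{\s{W}}<\infty$ and the claimed inequalities make sense. I would also dispose of the degenerate case $\lambda_{t-1}(x_t)=0$ at the outset: then $x_t=X_{t-1}\lambda_{t-1}(x_t)=0$, hence $x_{t+1}=w_t$, $\Nxy{x_{t+1}}{\s{W}}\leq 1$, and every right-hand side is at least $1$ (reading the first expression, whose scalar factor $\Nxy{\lambda_{t-1}(x_t)}{1}$ vanishes, as $0+1$). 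So from now on assume $\lambda_{t-1}(x_t)\neq 0$.

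For the first inequality I would apply the triangle inequality and positive homogeneity of $\Nxy{\anon}{\s{W}}$ to the closed-loop identity to get $\Nxy{x_{t+1}}{\s{W}}\leq \Nxy{W_{t-1}\lambda_{t-1}(x_t)}{\s{W}}+\Nxy{w_t}{\s{W}}$, then factor out $\Nxy{\lambda_{t-1}(x_t)}{1}$ from the first term and use $\Nxy{w_t}{\s{W}}\leq 1$ (because $w_t\in\s{W}\subset\cx{\s{W}}$) to obtain exactly the stated bound. For \eqref{eq:lambdineq} I would note that $\nu:=\lambda_{t-1}(x_t)/\Nxy{\lambda_{t-1}(x_t)}{1}$ has $\Nxy{\nu}{1}=1$, so $W_{t-1}\nu$ is an absolute convex combination of the columns of $W_{t-1}$, each of which lies in $\s{W}$; hence $W_{t-1}\nu\in\cx{\s{W}}$ and $\Nxy{W_{t-1}\nu}{\s{W}}\leq 1$, which removes that factor.

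The remaining three links use only properties of the norms, all recorded in \lemref{lem:normproperties}. Estimate \eqref{eq:normineq} is the bridge from the controller's $\ell_1$-objective to the gauge of $\cx{\s{X}_{t-1}}$: since $\s{X}_{t-1}$ is finite, $r\,\cx{\s{X}_{t-1}}=\{X_{t-1}\lambda:\Nxy{\lambda}{1}\leq r\}$, so taking $r=\Nxy{x_t}{\s{X}_{t-1}}$ produces a feasible $\lambda'$ for $X_{t-1}\lambda=x_t$ with $\Nxy{\lambda'}{1}\leq r$, and minimality of $\lambda_{t-1}(x_t)$ gives $\Nxy{\lambda_{t-1}(x_t)}{1}\leq\Nxy{x_t}{\s{X}_{t-1}}$ (in fact an equality). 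Estimate \eqref{eq:normineqloose} is submultiplicativity: if $\rho:=\Nxy{x_t}{\s{W}}>0$ then $x_t/\rho\in\cx{\s{W}}$, so $\Nxy{x_t/\rho}{\s{X}_{t-1}}\leq\max_{z\in\cx{\s{W}}}\Nxy{z}{\s{X}_{t-1}}=\BXt{t-1}$, and homogeneity yields $\Nxy{x_t}{\s{X}_{t-1}}\leq\BXt{t-1}\Nxy{x_t}{\s{W}}$ (the case $\rho=0$ forces $x_t=0$). Finally \eqref{eq:kappaloose} is monotonicity of $\BXt{\anon}$ in the amount of data: for $t>\tnull$ the matrix $X_{t-1}$ contains every column of $X_{\tnull-1}$, so $\cx{\s{X}_{\tnull-1}}\subseteq\cx{\s{X}_{t-1}}$, whence $\Nxy{z}{\s{X}_{t-1}}\leq\Nxy{z}{\s{X}_{\tnull-1}}$ pointwise, and taking the maximum over $z\in\cx{\s{W}}$ gives $\BXt{t-1}\leq\BXtnull=\kk$.

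I do not anticipate a genuine obstacle here: the argument is essentially bookkeeping against \lemref{lem:normproperties}. The points that need care are confirming $x_{t+1}\in\mathrm{span}(\s{W})$ so the norm is finite, the $\lambda_{t-1}(x_t)=0$ convention, and matching the column counts of $W_{t-1}$ and $X_{t-1}$ so that $W_{t-1}\lambda_{t-1}(x_t)$ is well-defined. Conceptually the load-bearing links are \eqref{eq:normineq}, which converts the $\ell_1$-optimization defining $\CC$ into the convex-geometric norm $\Nxy{\anon}{\s{X}_{t-1}}$, and \eqref{eq:kappaloose}, which exploits the nestedness of the data sets $\s{X}_t$ to make the bound uniform over all $t>\tnull$.
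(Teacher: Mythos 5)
Your proof is correct and follows essentially the same route as the paper: triangle inequality plus homogeneity on the closed-loop identity with $\Nxy{w_t}{\s{W}}\leq 1$, the normalized combination of columns of $W_{t-1}$ lying in $\cx{\s{W}}$, the identity $\Nxy{\lambda_{t-1}(x_t)}{1}=\Nxy{x_t}{\s{X}_{t-1}}$ from property \ref{it:norm-2} of \lemref{lem:normproperties}, normalizing $x_t$ by $\Nxy{x_t}{\s{W}}$ to get the factor $\BXt{t-1}$, and monotonicity of $\BXt{t}$ from the nestedness of the data sets. Your extra care with the degenerate cases ($\lambda_{t-1}(x_t)=0$, span membership) is sound bookkeeping that the paper omits but does not change the argument.
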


\noindent Recall, that the vector $\lambda_{t-1}(x_t)$ poses as a linear decomposition of $x_t$ in terms of the previous observations $X_{t-1}$, which is obtained through the minimization in \eqref{eq:cc2}. The right hand side of inequality \eqref{eq:normineq} and \eqref{eq:lambdineq} are equivalent. This follows from the equivalence relation
\begin{align}
        \Nxy{\lambda_{t-1}(\anon)}{1} = \Nxy{\anon}{\s{X}_{t-1}},
\end{align}
which follows from property \ref{it:norm-2} of \lemref{lem:normproperties} and is discussed in the appendix.

\noindent The inequality \eqref{eq:normineq} offers valuable insight into the closed loop behavior: The smaller $x_t$ is relative to the absolute convex hull of all previous observations $\s{X}_{t-1}$, the tighter the bound is on $\Nxy{x_{t+1}}{\s{W}}$. Hence, $\Nxy{x_t}{\s{X}_{t-1}}$ captures how well we can control a certain state $x_t$ given the observations made up until time $t$.

 If we rewrite $\Nxy{x_t}{\s{X}_{t-1}}$ as $\Nxy{x_t}{\s{W}}\Nxy{x_t/{\Nxy{x_t}{\s{W}}} }{\s{X}_{t-1}}$ and use the fact that the normalized vector $x_t/{\Nxy{x_t}{\s{W}}}$ lies in the set $\s{W}$, we obtain the looser upper-bound \eqref{eq:normineqloose}. Finally, \eqref{eq:kappaloose} is obtained by recalling that per definition $\Nxy{\s{W}}{\s{X}_t}$ is non-increasing in $t$ and therefore for all $t > \tau$ holds $\Nxy{\s{W}}{\s{X}_{t-1}} \leq \Nxy{\s{W}}{\s{X}_{\tau-1}} = \kappa_\tau$.

\subsection{Obtaining bounds on closed loop trajectories }

\noindent Recall the definition of a $\mu$-unstable transition in \defref{def:unstabletransition} and consider \lemref{lem:transitions}:
 If an $\mu$-unstable transition does not occur, \eqref{eq:V1leq} and \eqref{eq:V2leq} show that the quantities $V_1(x_t;\mu)$ and $V_2(x_t;\mu)$ do not increase for that time-step; On the other hand, \eqref{eq:V2geq} provides a bound on the increase of $V_2(x_t;\mu)$ if a $\mu$-unstable transition does occur.

\begin{lem}\label{lem:transitions}
        Let $(x_t)$ be a trajectory of \eqref{eq:clloop} with $t\geq 0$ and define the scalar functions $V_1(x;\mu) := \max\{0,\Nxy{x}{\s{W}}-\tfrac{1}{1-\mu}\}$ and $V_2(x;\mu) := \max\{\Nxy{x}{\s{W}},\tfrac{1}{1-\mu}\}$. Then,
        \begin{enumerate}[label = (\roman*)]
                \item \label{it:ineqstab} if $(x_{t+1},x_t) \notin \cl{U}_{\mu}$, then 
                \
        \begin{align} 
                \label{eq:V1leq}V_1(x_{t+1};\mu)  &\leq \mu V_1(x_{t};\mu)\\
                \label{eq:V2leq}V_2(x_{t+1};\mu) &\leq V_2(x_{t};\mu) 
        \end{align}
        \item \label{it:inequnstab}  if $(x_{t+1},x_t) \in \cl{U}_{\mu}$, then
        \begin{align} \label{eq:V2geq}
                V_2(x_{t+1};\mu) &\leq \kappa_t V_2(x_{t};\mu) + 1\\
                \label{eq:V1geq}
                V_1(x_{t+1};\mu) &> \mu V_1(x_{t};\mu) 
        \end{align}
        \end{enumerate}
        \end{lem}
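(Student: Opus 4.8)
The plan is to reduce the four inequalities to a short case split resting on one algebraic identity plus a single invocation of \lemref{lem:clineqs}. First I would set $c:=\tfrac{1}{1-\mu}$ (note $c>1$ since $0<\mu<1$) and abbreviate $a:=\Nxy{x_t}{\s{W}}$, $b:=\Nxy{x_{t+1}}{\s{W}}$. The one fact that makes everything go is the identity $V_1(x;\mu)=V_2(x;\mu)-c$, valid for all $x$: both sides equal $\Nxy{x}{\s{W}}-c$ when $\Nxy{x}{\s{W}}\geq c$ and both equal $0$ otherwise. I would also record the elementary equivalence $\mu a+1\geq c\iff a\geq c$ (divide $\mu a\geq\tfrac{\mu}{1-\mu}$ by $\mu>0$), so that the threshold in \defref{def:unstabletransition} satisfies $\max\{c,\mu a+1\}=c$ when $a<c$ and $=\mu a+1$ when $a\geq c$; by definition $(x_{t+1},x_t)\notin\c{U}_\mu$ means $b\leq\max\{c,\mu a+1\}$, and $(x_{t+1},x_t)\in\c{U}_\mu$ is the strict reverse.

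For part \ref{it:ineqstab}, I split on $a<c$ versus $a\geq c$. If $a<c$ then $b\leq c$, whence $V_1(x_{t+1};\mu)=0\leq\mu V_1(x_t;\mu)$ and $V_2(x_{t+1};\mu)=c=V_2(x_t;\mu)$, giving \eqref{eq:V1leq} and \eqref{eq:V2leq}. If $a\geq c$ then $b\leq\mu a+1$; using $1-c=-\mu c$ one computes $b-c\leq\mu(a-c)=\mu V_1(x_t;\mu)$, and taking the $\max$ with $0$ yields \eqref{eq:V1leq}; moreover $a\geq c$ is equivalent to $(1-\mu)a\geq1$, i.e. $\mu a+1\leq a$, so $b\leq a$ and taking the $\max$ with $c$ yields \eqref{eq:V2leq}.

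For part \ref{it:inequnstab}, from $b>\max\{c,\mu a+1\}$ we get $b>c$, hence $V_1(x_{t+1};\mu)=b-c$ and $V_2(x_{t+1};\mu)=b$. If $a<c$ then $V_1(x_t;\mu)=0<b-c$; if $a\geq c$ then $b>\mu a+1$ gives $b-c>\mu(a-c)=\mu V_1(x_t;\mu)$; either way \eqref{eq:V1geq} holds. For \eqref{eq:V2geq} I would apply the one-step estimate \eqref{eq:normineqloose} of \lemref{lem:clineqs} to the closed loop \eqref{eq:clloop}, namely $b\leq\kappa_t a+1$; since $\kappa_t\geq0$ and $a\leq\max\{a,c\}=V_2(x_t;\mu)$, this gives $V_2(x_{t+1};\mu)=b\leq\kappa_t V_2(x_t;\mu)+1$.

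There is no genuine obstacle here — the content is bookkeeping. The point to get right is that only \eqref{eq:V2geq} uses the closed-loop dynamics (through \lemref{lem:clineqs}); the other three inequalities are purely consequences of the definition of $\c{U}_\mu$ and of $V_1,V_2$. One small care: \lemref{lem:clineqs} was phrased for $t>\tau$, while the current lemma allows $t\geq0$, but what is actually used is the one-step bound $\Nxy{x_{t+1}}{\s{W}}\leq\kappa_t\Nxy{x_t}{\s{W}}+1$ with $\kappa_t=\BXt{t-1}$, which holds for every $t$ at which the closed-loop recursion is defined — guaranteed since $\mathrm{rank}(X_{-1})=n$ makes every $\kappa_t$ finite — so the restriction is immaterial.
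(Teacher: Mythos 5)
Your proof is correct and follows essentially the paper's own argument: both rest on the identity $\mu\cdot\tfrac{1}{1-\mu}+1=\tfrac{1}{1-\mu}$ together with elementary manipulations of the max (your explicit case split on $\Nxy{x_t}{\s{W}}\gtrless\tfrac{1}{1-\mu}$ is just a pedestrian rendering of the paper's convex-combination rewriting), and both obtain \eqref{eq:V2geq} from the one-step bound \eqref{eq:normineqloose} of \lemref{lem:clineqs}. Your closing remark that only \eqref{eq:V2geq} uses the closed-loop dynamics, and that the $t>\tau$ phrasing of \lemref{lem:clineqs} is immaterial, matches the paper's own use of that inequality "for all time $t$."
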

        \begin{proof}
                see appendix.
        \end{proof}

\noindent The bounds of \thmref{thm:main} follows by combining the result of \thmref{thm:finiteunstable} with the above Lemma. To highlight the main proof techniques, we only focus on the derivation of \ref{it:limsup} and \ref{it:exp} of \thmref{thm:main} and refer to the appendix for a detailed proof of the remaining statements.\\

\noindent Consider some arbitrary closed loop trajectory $(x_t)$, fix $\tnull=0$ and some choice $\mu \in \cl{I}_{\kappa_0}$, where $\kk$ depends on the set $\s{W}$ and the initial guess matrix $X_{-1}=[\hat{x}_{1},\dots, \hat{x}_{n_0}]$. Recall, that $\kk$ measures the relative size between the disturbance set $\s{W}$ and the set $\cx{\s{X}_{-1}}$.
According to \thmref{thm:finiteunstable} the trajectory $(x_t)$ is guaranteed to have at most $N(\mu;\kappa_0)$-many $\mu$-unstable transitions. 
Hence, there is some finite time, call it $T'((x_t))$, such that for all time $t>T'((x_t))$ it holds $(x_{t+1},x_t) \notin \cl{U}_\mu$ and therefore the reverse inequality of \eqref{eq:unstabdef} holds, i.e.:
\begin{align}
        \Nxy{x_{t+1}}{\s{W}} \leq \max \left\{ \tfrac{1}{1-\mu}, \mu \Nxy{x_t}{\s{W}} +1  \right\},\quad \forall t>T'((x_t))
\end{align}
Now, apply the statement \ref{it:ineqstab} of \lemref{lem:transitions}, to conclude that for all $t>T'((x_t))$ holds $V_1(x_{t+1};\mu) \leq \mu V_1(x_{t};\mu)$. Therefore we get the convergence bound
\begin{align}\label{eq:exppreview}
        V_1(x_{T'((x_t))+k};\mu) \leq \mu^k V_1(x_{T'((x_t))};\mu), \quad k\geq 0
\end{align}
which proves that the trajectory $(x_t)$ has to be bounded. We also conclude that $\lim\limits_{t\rightarrow \infty}V_1(x_{t};\mu) = 0$ which leads to the asymptotic bound \begin{align}\label{eq:limsuppreview}
        \limsup\limits_{t \rightarrow \infty} \Nxy{x_t}{\s{W}} \leq \limsup\limits_{t \rightarrow \infty} (V_1(x_{t};\mu)+\tfrac{1}{1-\mu}) = \tfrac{1}{1-\mu}.
\end{align}

\noindent Similar type of arguments are used to derive the other statements of \thmref{thm:main} and are presented in appendix.

\section{Proving Finite Occurrence of Unstable transitions}\label{sec:boundingproof}
\noindent Here, we will discuss the key steps of proving \thmref{thm:finiteunstable}. The general idea will be to first argue that if an unstable transition occurred at time $t'$ and state $x_{t'}$, (i.e. $(x_{t'+1},x_{t'}) \in \c{U}_\mu$) then any future unstable transitions $(x_{t+1},x_t)\in \c{U}_\mu$, $t>t'$ must originate from some state $x_t$ which is significantly different from $x_{t'}$; In a second step, we then prove that there is a finite upper bound on how many significantly "different" unstable transitions can occur in the same trajectory, which leads to the result presented in \thmref{thm:finiteunstable}. In the following derivations we will make use of various simple facts from convex geometry, which are summarized in the appendix, \secref{sec:normballs}.

Matching the presentation of the theorem, in the derivations we will use the constant $\kk:=\BXtnull$ corresponding to some fixed set $\s{W}$, trajectory $(x_t)$ of the closed loop \eqref{eq:clloop} and initial time $\tau$. All throughout the discussion, $\mu$ will represent some fixed value in the open interval 
\begin{align}\label{eq:muinterval}
        \c{I}_\kk:=\Big( \Big(\sqrt{\tfrac{1}{4}+\tfrac{1}{\kk}}+\tfrac{1}{2}\Big)^{-1} , 1\Big)
\end{align}
and $\dd$ will refer to the corresponding transformed variable $\dd:=\tfrac{\mu^2}{1-\mu}\tfrac{1}{\kk}$, which always satisfies $\dd>1$. The following one-to-one relationship between both constants $\mu$ and $\dd$ will be frequently used and can be easily verified:
\begin{subequations}
        \label{eq:deltaandmu}
\begin{align}
        &&\dd &= \tfrac{\mu^2}{1-\mu}\tfrac{1}{\kk}, \quad\text{for }\mu \in \Big( \Big(\sqrt{\tfrac{1}{4}+\tfrac{1}{\kk}}+\tfrac{1}{2}\Big)^{-1} , 1\Big)\\
        \Leftrightarrow&&\mu &= \Big(\sqrt{\tfrac{1}{4} + \tfrac{1}{\dd \kk}} + \tfrac{1}{2} \Big)^{-1},\quad 
        \text{for }\dd \in (1,\infty) 
\end{align}
\end{subequations}
Our argument can be structured into the following three statements, which we prove separately in the next sections:

\begin{enumerate}[label=(\alph*)]
        \item We can radially project the set $\c{X}_\mu$ onto the ball $\tfrac{\dd}{\mu} \s{W}$ and show that the resulting set, called $\c{P}_\mu$, has the same cardinality as $\c{X}_\mu$.
        \item The set $\c{P}_\mu$ forms a $\dd$-separated subset of $\tfrac{\dd}{\mu} \s{W}$ with respect to a particularly chosen distance function $\dxyK{\anon}{\anon}{\s{X}_{\tnull-1}}$.
        \item There are some constants $c$ and $C$, such that for any $\dd$-separated subset $\s{P}$ of $\tfrac{\dd}{\mu} \s{W}$ we can construct a superset $\s{P} \subset \c{N}(\s{P})$ in $\R^n$ whose volume can be bounded above and below as $|\s{P}|c_\mathrm{in} \leq \vol(\c{N}(\s{P})) \leq C_{\mathrm{out}}$; Hence, the cardinality of any $\dd$-separated set, $\c{P}_\mu$ included, is bounded above by $\tfrac{C_{\mathrm{out}}}{c_\mathrm{in}}$.
\end{enumerate}

\subsection{Projection onto the ball $\tfrac{\dd}{\mu} \s{W}$}
\noindent Define the projection $\Pi_{\mu}:\R^n\mapsto\tfrac{\dd}{\mu} \s{W}$ as $\Pi_{\mu}(p):= \tfrac{\dd}{\mu\Nxy{p}{\s{W}}}p$ and define $\c{P}_\mu((x_t);\tnull)$ as the set resulting from applying $\Pi_{\mu}$ to every point in $\c{X}_\mu\left( (x_t);\tnull\right)$:
\begin{align}\label{eq:Pdef}
        \c{P}_\mu((x_t);\tnull):= \left\{\; \Pi_{\mu}(x_t) \;\left|\;x_t\in \c{X}_\mu( (x_t);\tnull)  \right. \right\}.
\end{align}
\begin{rem*}
To limit the notational burden, we will state the explicit dependency on the trajectory $(x_t)$ and $\tnull$ only in lemmas and theorems. For derivations, we will just write $\c{X}_\mu$, $\c{P}_\mu$ instead of $\c{P}_\mu((x_t);\tnull)$, $\c{X}_\mu((x_t);\tnull)$. 
\end{rem*}
\noindent Per construction, for every point $p \in \c{P}_\mu$ holds $\Nxy{p}{\s{W}} = \tfrac{\dd}{\mu}$ and therefore each $p \in \c{P}_\mu$ lies on the surface of the ball $\tfrac{\dd}{\mu} \s{W}$. 

\noindent Recall that for a time instance $t$, where $x_t \in \c{X}_\mu$ holds 
\begin{subequations}
        \begin{align}
                \label{eq:lbnxy}\Nxy{x_{t+1}}{\s{W}} &> \max \left\{ \tfrac{1}{1-\mu}, \mu \Nxy{x_t}{\s{W}} +1  \right\}\\
                \notag \Nxy{x_{t+1}}{\s{W}} &\leq \Nxy{x_t}{\s{X_{t-1}}} + 1, \\
                \label{eq:ubnxy}&\leq \BXt{t-1} \Nxy{x_t}{\s{W}} + 1
        \end{align}   
\end{subequations}
where \eqref{eq:lbnxy} is due to the definition of the set $\c{X}_\mu$ and \eqref{eq:ubnxy} follows from \lemref{lem:clineqs}. Combining the above inequalities, we can further establish that any $x_t \in \c{X}_\mu$ also satisfies the inequalities \eqref{eq:xtlb1}: 
\begin{lem}\label{lem:lb}
        \begin{subequations}
        \begin{align}
        \label{eq:xtlb1} &\mu\Nxy{x_t}{\s{W}} > \tfrac{\mu^2 }{(1-\mu)} \tfrac{1}{\BXt{t-1}}\\
        \label{eq:xtlb2}&\Nxy{\tfrac{1}{\mu\Nxy{x_{t}}{\s{W}}} {x_t}}{\s{X}_{t-1}} > 1. 
        \end{align}
        \end{subequations}
\end{lem}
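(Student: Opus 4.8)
The plan is to obtain both inequalities by simply chaining together the two one-step bound blocks \eqref{eq:lbnxy}--\eqref{eq:ubnxy} that are recalled immediately above the statement; no new convex-geometry input is required, so this is essentially a bookkeeping step.

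For \eqref{eq:xtlb1} I would combine the lower bound $\Nxy{x_{t+1}}{\s{W}} > \tfrac{1}{1-\mu}$ coming from \eqref{eq:lbnxy} with the loose upper bound $\Nxy{x_{t+1}}{\s{W}} \leq \BXt{t-1}\Nxy{x_t}{\s{W}} + 1$ from \eqref{eq:ubnxy}. This gives $\BXt{t-1}\Nxy{x_t}{\s{W}} > \tfrac{1}{1-\mu} - 1 = \tfrac{\mu}{1-\mu}$; dividing by $\BXt{t-1}$ (which is finite since $\mathrm{rank}(X_{-1})=n$) and multiplying through by $\mu$ gives $\mu\Nxy{x_t}{\s{W}} > \tfrac{\mu^2}{1-\mu}\tfrac{1}{\BXt{t-1}}$, i.e. \eqref{eq:xtlb1}.

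For \eqref{eq:xtlb2} I would instead pair the lower bound $\Nxy{x_{t+1}}{\s{W}} > \mu\Nxy{x_t}{\s{W}} + 1$ from \eqref{eq:lbnxy} with the tighter upper bound $\Nxy{x_{t+1}}{\s{W}} \leq \Nxy{x_t}{\s{X}_{t-1}} + 1$ (inequality \eqref{eq:normineq} of \lemref{lem:clineqs}). Cancelling the common $+1$ yields $\Nxy{x_t}{\s{X}_{t-1}} > \mu\Nxy{x_t}{\s{W}}$. Combining that same upper bound with $\Nxy{x_{t+1}}{\s{W}} > \tfrac{1}{1-\mu}$ also shows $\Nxy{x_t}{\s{X}_{t-1}} > \tfrac{\mu}{1-\mu} > 0$, so $x_t \neq 0$ and hence $\mu\Nxy{x_t}{\s{W}} > 0$. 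Dividing $\Nxy{x_t}{\s{X}_{t-1}} > \mu\Nxy{x_t}{\s{W}}$ by this strictly positive scalar and using positive homogeneity of $\Nxy{\anon}{\s{X}_{t-1}}$ produces $\Nxy{\tfrac{1}{\mu\Nxy{x_t}{\s{W}}}x_t}{\s{X}_{t-1}} > 1$, which is \eqref{eq:xtlb2}.

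I do not expect a genuine obstacle here: the lemma merely repackages the one-step growth estimates into the form needed for the radial-projection and $\dd$-separation arguments of parts (a) and (b). The only points requiring a word of justification are the well-posedness of the two rescalings — finiteness of $\BXt{t-1}$ and strict positivity of $\Nxy{x_t}{\s{W}}$ — both of which are dispatched above.
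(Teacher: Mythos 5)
Your proof is correct and follows essentially the same route as the paper: pair the lower bound $\Nxy{x_{t+1}}{\s{W}} > \tfrac{1}{1-\mu}$ with \eqref{eq:ubnxy} to get \eqref{eq:xtlb1}, and pair $\Nxy{x_{t+1}}{\s{W}} > \mu\Nxy{x_t}{\s{W}}+1$ with \eqref{eq:normineq} to get $\Nxy{x_t}{\s{X}_{t-1}} > \mu\Nxy{x_t}{\s{W}}$ and hence \eqref{eq:xtlb2} by homogeneity. The only difference is that you spell out the well-posedness of the rescalings, which the paper leaves implicit.
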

\begin{proof}
Combining the lower-bound \eqref{eq:lbnxy} and the upper-bound \eqref{eq:ubnxy} yields
        \begin{align*}
                \Nxy{x_t}{\s{X}_{t-1}} + 1 &> \mu \Nxy{x_t}{\s{W}} +1 \\
                 \BXt{t-1} \Nxy{x_t}{\s{W}} &> \tfrac{1}{1-\mu}-1 = \mu \tfrac{1}{1-\mu}.
        \end{align*}  
\end{proof}

\noindent Using the above inequalities we can show in \lemref{lem:card} that $\c{P}_\mu$ has the same cardinality as $\c{X}_\mu$. Hence, instead of reasoning about the size of $\c{X}_\mu$ directly, we can equivalently study the size of the set $\c{P}_\mu$. As will become apparent in the following sections, the main advantage of analyzing the projected set $\c{P}_\mu$ rather than $\c{X}_\mu$ is that we can leverage $\c{P}_\mu$ as a subset of $\tfrac{\dd}{\mu} \s{W}$.
\begin{lem}\label{lem:card}
 $|\cl{X}_\mu| = |\cl{P}_\mu|$.
\end{lem}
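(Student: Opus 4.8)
The plan is to show that the projection map $\Pi_\mu$ is injective when restricted to $\c{X}_\mu$; since $\c{P}_\mu$ is by definition the image of $\c{X}_\mu$ under $\Pi_\mu$, injectivity immediately gives $|\c{X}_\mu| = |\c{P}_\mu|$. The map $\Pi_\mu(p) = \tfrac{\dd}{\mu \Nxy{p}{\s{W}}} p$ is radial: it sends $p$ to the unique positive scalar multiple of $p$ lying on the surface $\Nxy{\cdot}{\s{W}} = \tfrac{\dd}{\mu}$. Two distinct points $x, x' \in \c{X}_\mu$ can therefore have the same image only if they are positive scalar multiples of each other, i.e. $x' = \alpha x$ for some $\alpha > 0$, $\alpha \neq 1$. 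So the whole lemma reduces to ruling out that $\c{X}_\mu$ contains two such radially-aligned points.

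First I would set up the contradiction: suppose $x_s, x_t \in \c{X}_\mu$ with $x_t = \alpha x_s$, and without loss of generality $\alpha \geq 1$ (so $\Nxy{x_t}{\s{W}} \geq \Nxy{x_s}{\s{W}}$) and, say, $s < t$ in the trajectory. The key observation is that $x_s \in \c{X}_\mu$ means $x_s$ is among the columns of $X_{s}$, hence $x_s \in \s{X}_{t-1}$ (since $s \le t-1$), so $\cx{\s{X}_{t-1}}$ contains $x_s$ and therefore contains the segment to $-x_s$; this forces $\Nxy{x_s}{\s{X}_{t-1}} \leq 1$, and more generally $\Nxy{\beta x_s}{\s{X}_{t-1}} \leq |\beta|$ for any scalar $\beta$. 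Applied with $\beta x_s = \tfrac{1}{\mu \Nxy{x_t}{\s{W}}} x_t = \tfrac{1}{\mu \Nxy{x_s}{\s{W}}} x_s$ (using $x_t = \alpha x_s$ and $\Nxy{x_t}{\s{W}} = \alpha \Nxy{x_s}{\s{W}}$), this yields
\begin{align*}
        \Nxy{\tfrac{1}{\mu\Nxy{x_t}{\s{W}}} x_t}{\s{X}_{t-1}} \;=\; \Nxy{\tfrac{1}{\mu\Nxy{x_s}{\s{W}}} x_s}{\s{X}_{t-1}} \;\leq\; \tfrac{1}{\mu\Nxy{x_s}{\s{W}}}.
\end{align*}
On the other hand, \eqref{eq:xtlb2} of \lemref{lem:lb} applied at time $t$ gives $\Nxy{\tfrac{1}{\mu\Nxy{x_t}{\s{W}}} x_t}{\s{X}_{t-1}} > 1$, so $\tfrac{1}{\mu \Nxy{x_s}{\s{W}}} > 1$, i.e. $\Nxy{x_s}{\s{W}} < \tfrac{1}{\mu}$. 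But $x_s \in \c{X}_\mu$ and $(x_{s+1},x_s)\in\c{U}_\mu$ implies via \eqref{eq:unstabdef} at time $s$ that $\Nxy{x_{s+1}}{\s{W}} > \tfrac{1}{1-\mu} > 1$, and combining the two halves of \lemref{lem:lb} at time $s$ (or directly \eqref{eq:xtlb1}) should force $\Nxy{x_s}{\s{W}}$ to be bounded below by something incompatible with $\Nxy{x_s}{\s{W}} < 1/\mu$ once $\dd>1$ is used — this is where the contradiction closes.

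I expect the main obstacle to be the bookkeeping in the last step: getting a clean lower bound on $\Nxy{x_s}{\s{W}}$ from the unstable-transition inequality and \lemref{lem:lb} that contradicts $\Nxy{x_s}{\s{W}} < 1/\mu$, and in particular handling the case distinctions in the $\max$ appearing in \eqref{eq:unstabdef} and whether $s$ or $t$ comes first in the trajectory (the argument is symmetric but the inequality $\Nxy{x_s}{\s{X}_{t-1}} \le 1$ only works with the earlier index playing the role of "already observed data"). One also needs to confirm that a point of $\c{X}_\mu$ is genuinely a column of the data matrix at its own time index — this is immediate from the construction \eqref{subeq:matrix} of $X_t$ together with \defref{def:Tu}, but it should be stated explicitly since it is the linchpin of the injectivity argument.
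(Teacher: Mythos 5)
Your proposal is correct and follows essentially the same route as the paper's proof: equality of the radial projections forces the normalized states to coincide, \eqref{eq:xtlb2} at the later time is played against $\Nxy{x_s}{\s{X}_{t-1}}\leq 1$ for the earlier state, and \eqref{eq:xtlb1} together with the monotonicity of $\BXt{t}$ and $\mu\in\c{I}_{\kk}$ (i.e.\ $\tfrac{\mu^2}{1-\mu}>\kk$) closes the contradiction exactly as you anticipate. The step you flagged as the remaining obstacle does go through as sketched (and the normalization $\alpha\geq 1$ you assume is never actually used), so nothing essential is missing.
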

\begin{proof}
From the definition of $\c{P}_\mu$ it is clear that $\c{P}_\mu$ has at most as many elements as $\c{X}_\mu$, hence trivially we have $|\cl{P}_\mu| \leq |\cl{X}_\mu|$. To establish $|\cl{P}_\mu| \geq |\cl{X}_\mu|$, we have to show that there are no two time instances $t_1 \neq t_2$ for which $x_{t_1}, x_{t_2} \in \c{X}_\mu$ gets mapped to the same point $p\in \cl{P}_\mu$. For the sake of proof by contradiction, assume for some $x_{t_1}, x_{t_2} \in \c{X}_\mu$ where w.l.o.g. $t_1<t_2$, holds $\dd (\mu \Nxy{x_{t_1}}{\s{W}})^{-1} x_{t_1} = \dd(\mu \Nxy{x_{t_2}}{\s{W}})^{-1} x_{t_2}$. Then, using \lemref{lem:lb} it follows:
\begin{align}
        \notag &\Nxy{\tfrac{1}{\mu \Nxy{x_{t_1}}{\s{W}}} x_{t_1}}{\s{X}_{t_2-1}} = \Nxy{\tfrac{1}{\mu \Nxy{x_{t_2}}{\s{W}}} x_{t_2}}{\s{X}_{t_2-1}}\overset{\eqref{eq:xtlb2}}{>} 1 \\
        \label{eq:ineqproofcard}\Rightarrow& \Nxy{x_{t_1}}{\s{X}_{t_2-1}} > \mu \Nxy{x_{t_1}}{\s{W}} \overset{\eqref{eq:xtlb1}}{>} \tfrac{\mu^2 }{(1-\mu)} \tfrac{1}{\BXt{t_1-1}}
\end{align}
Now, since $t_2>t_1$, it is clear that $x_{t_1} \in \cx{\s{X}_{t_2-1}}$ and therefore $\Nxy{x_{t_1}}{\s{X}_{t_2-1}} \leq 1$. Moreover, with \eqref{eq:ineqproofcard} and since $\mu$ is in the interval $\c{I}_{\kk}$, we are forced to conclude:
\begin{align}
        \BXt{t_1-1}> \frac{\mu^2 }{(1-\mu)}  \geq \BXtnull 
\end{align}
which is a contradiction, since $t_1\geq \tnull$ and we know that $\BXt{t}$ is non-increasing in $t$, 
\end{proof}

\subsection{Separateness of the set $\c{P}_\mu$ }

\noindent The previous section established, that the bounded set $\c{P}_\mu \subset \tfrac{\dd}{\mu} \s{W}$ has equal number of elements as $\c{X}_\mu$. Here, we will show that the points in the set $\cl{P}_\mu$ are "evenly spread out" across the surface of $\tfrac{\dd}{\mu} \s{W}$. Formally, we will term $\cl{P}_\mu$ to be a $\dd$-\textit{separated} subset of $\tfrac{\dd}{\mu} \s{W}$. This property will ultimately lead to the cardinality bound derived in the next section \ref{sec:volbound}.

The next lemma shows that any two points $p, p' \in \cl{P}_\mu$, $p \neq p'$ respect the inequality \eqref{eq:sep-p-ineq}.

\begin{lem}\label{lem:sepcondition-nometric}
        Let $\cl{P}_\mu((x_t);\tnull)$ be the projected set \eqref{eq:Pdef} and recall the definitions of the variables $\dd$, $\mu$ and $\kk$ in \eqref{eq:deltaandmu}. Then, for any two distinct points $p_1, p_2 \in  \cl{P}_\mu((x_t);\tnull)$, $p_1\neq p_2$ holds:
        \begin{align}\label{eq:sep-p-ineq}
                \max\{\Nxy{p_2}{\s{X}_{\tnull-1} \cup \s{p}_1}, \Nxy{p_1}{\s{X}_{\tnull-1} \cup \s{p}_2}\}  > \rad
        \end{align}
\end{lem}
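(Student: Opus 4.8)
The plan is to fix two distinct points $p_1, p_2 \in \cl{P}_\mu((x_t);\tnull)$, say $p_i = \Pi_\mu(x_{t_i})$ with $t_1 \neq t_2$, and without loss of generality assume $t_1 < t_2$. The key observation is that by \lemref{lem:lb}, specifically inequality \eqref{eq:xtlb2}, each point $x_{t_i}$ satisfies $\Nxy{\tfrac{1}{\mu\Nxy{x_{t_i}}{\s{W}}}x_{t_i}}{\s{X}_{t_i-1}} > 1$, i.e. the rescaled vector $y_i := \tfrac{1}{\mu\Nxy{x_{t_i}}{\s{W}}}x_{t_i}$ lies \emph{outside} $\cx{\s{X}_{t_i-1}}$. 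Since $p_i = \Pi_\mu(x_{t_i})$ is a positive multiple of $x_{t_i}$, and $y_i$ is too, the points $p_i$, $x_{t_i}$, $y_i$ all lie on the same ray from the origin; I will translate the ``$\Nxy{\cdot}{\s{X}_{t_i-1}} > 1$'' statement about $y_i$ into a lower bound on $\Nxy{p_i}{\s{X}_{t_i-1}}$ using the fact that $\Nxy{p_i}{\s{W}} = \tfrac{\dd}{\mu}$ together with the relation $\dd = \tfrac{\mu^2}{1-\mu}\tfrac{1}{\kk}$ from \eqref{eq:deltaandmu} and the positive homogeneity of the norm.

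The core of the argument is to exploit the asymmetry $t_1 < t_2$. The larger-index point $x_{t_2}$ has $x_{t_1}$ already available in its data: since $t_1 \le t_2 - 1$ we have $x_{t_1} \in \cx{\s{X}_{t_2-1}}$, and moreover $p_1$ (being a scalar multiple of $x_{t_1}$) satisfies $\Nxy{p_1}{\s{X}_{t_2-1}} = \Nxy{p_1}{\s{W}}\Nxy{x_{t_1}/\Nxy{x_{t_1}}{\s{W}}}{\s{X}_{t_2-1}} \le \Nxy{p_1}{\s{W}}\cdot\tfrac{1}{\mu\Nxy{x_{t_1}}{\s{W}}}\cdot(\text{something})$ — more precisely, I want to show $\Nxy{p_1}{\s{X}_{\tnull-1}\cup\s{p}_2}$ or $\Nxy{p_2}{\s{X}_{\tnull-1}\cup\s{p}_1}$ exceeds $\dd$. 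The idea is: the point $p_2$ lies outside $\cx{\s{X}_{t_2-1}}$ by a factor coming from \lemref{lem:lb}; if it \emph{also} lay within $\dd\cdot\cx{\s{X}_{\tnull-1}\cup\s{p}_1}$, then since $p_1 \in \cx{\s{X}_{t_1-1}} \subseteq \cx{\s{X}_{t_2-1}}$ up to a controlled scaling, one could write $p_2$ as a small combination of $\s{X}_{\tnull-1}$-vectors and $p_1$, contradicting that $p_2$ is too far from $\cx{\s{X}_{t_2-1}} \supseteq \cx{\s{X}_{\tnull-1}}$. I will make this precise by the following chain: assume for contradiction that both $\Nxy{p_2}{\s{X}_{\tnull-1}\cup\s{p}_1} \le \dd$ and $\Nxy{p_1}{\s{X}_{\tnull-1}\cup\s{p}_2} \le \dd$; then $p_2 \in \dd\cx{\s{X}_{\tnull-1}\cup\s{p}_1} \subseteq \dd\big(\cx{\s{X}_{\tnull-1}} + \tfrac{1}{\dd}\|p_1\|_{?}\cdots\big)$, and combining with $p_1 \in \cx{\s{X}_{\tnull-1}}$ scaled appropriately, deduce $\Nxy{p_2}{\s{X}_{\tnull-1}} \le \dd^2/(\text{factor from }p_1)$, which I will contradict against the lower bound $\Nxy{p_2}{\s{X}_{\tnull-1}} \ge \Nxy{p_2}{\s{X}_{t_2-1}} > $ (a quantity $\ge \dd$) obtained from \lemref{lem:lb} and the monotonicity $\BXt{t}$ decreasing in $t$.

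Concretely the steps are: (1) From \eqref{eq:xtlb2} in \lemref{lem:lb} applied at $t = t_i$, deduce $\Nxy{p_i}{\s{X}_{t_i-1}} = \tfrac{\dd}{\mu}\cdot\tfrac{1}{\Nxy{x_{t_i}}{\s{W}}}\Nxy{x_{t_i}}{\s{X}_{t_i-1}} > \dd \cdot \tfrac{1}{\mu\Nxy{x_{t_i}}{\s{W}}}\cdot\mu\Nxy{x_{t_i}}{\s{W}}$... — recompute carefully: since $p_i = \tfrac{\dd}{\mu\Nxy{x_{t_i}}{\s{W}}}x_{t_i}$, by homogeneity $\Nxy{p_i}{\s{X}_{t_i-1}} = \tfrac{\dd}{\mu\Nxy{x_{t_i}}{\s{W}}}\Nxy{x_{t_i}}{\s{X}_{t_i-1}}$, and \eqref{eq:xtlb2} says exactly $\tfrac{1}{\mu\Nxy{x_{t_i}}{\s{W}}}\Nxy{x_{t_i}}{\s{X}_{t_i-1}} > 1$, so $\Nxy{p_i}{\s{X}_{t_i-1}} > \dd$. (2) Since $\BXt{t}$ is non-increasing in $t$ and $t_i - 1 \ge \tnull - 1$, we get $\Nxy{p_i}{\s{X}_{\tnull-1}} \ge \Nxy{p_i}{\s{X}_{t_i-1}}$? — need care with direction: larger data set gives \emph{smaller} norm, so $\Nxy{p_i}{\s{X}_{t_i-1}} \ge \Nxy{p_i}{\s{X}_{\tnull-1}}$ is the wrong way; instead I use that $p_1 \in \cx{\s{X}_{t_1-1}} $ scaled gives $\Nxy{p_1}{\s{X}_{t_2-1}}$ small since $t_1 - 1 < t_2 - 1$ means $\s{X}_{t_1-1} \subseteq \s{X}_{t_2-1}$ hence $\Nxy{p_1}{\s{X}_{t_2-1}} \le \Nxy{p_1}{\s{X}_{t_1-1}}$. (3) Assume $\Nxy{p_2}{\s{X}_{\tnull-1}\cup\s{p}_1} \le \dd$. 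Write $p_2 = \dd(\sum_j \alpha_j \hat x_j^{(\tnull-1)} + \beta p_1)$ with $\sum|\alpha_j| + |\beta| \le 1$. Substitute $p_1$ as an element with $\Nxy{p_1}{\s{X}_{t_2-1}} \le \Nxy{p_1}{\s{X}_{t_1-1}} = \tfrac{\dd}{\mu\Nxy{x_{t_1}}{\s{W}}}\Nxy{x_{t_1}}{\s{X}_{t_1-1}} \le \tfrac{\dd}{\mu\Nxy{x_{t_1}}{\s{W}}}$ (as $\Nxy{x_{t_1}}{\s{X}_{t_1-1}} \le 1$). (4) Conclude $\Nxy{p_2}{\s{X}_{t_2-1}} \le \dd(\Nxy{\s{X}_{\tnull-1}}{\s{X}_{t_2-1}} + |\beta|\Nxy{p_1}{\s{X}_{t_2-1}})$; bound $\Nxy{\s{X}_{\tnull-1}}{\s{X}_{t_2-1}} \le 1$ and plug in, getting $\Nxy{p_2}{\s{X}_{t_2-1}} \le \dd(1 + \tfrac{\dd}{\mu\Nxy{x_{t_1}}{\s{W}}})$; then compare with $\Nxy{p_2}{\s{X}_{t_2-1}} > \dd$ from step (1). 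This alone is not a contradiction, so the symmetric assumption on $\Nxy{p_1}{\s{X}_{\tnull-1}\cup\s{p}_2} \le \dd$ must be combined to eliminate the cross terms — the genuine difficulty is organizing this two-sided bound so the $p_1$-in-terms-of-$p_2$ substitution closes the loop, likely by tracking the $\Nxy{x_{t_i}}{\s{W}}$ factors and using $\dd > 1$ plus the interval constraint $\mu \in \c{I}_\kk$. I expect the bookkeeping of these scalar factors $\Nxy{x_{t_i}}{\s{W}}$ and the correct exploitation of $t_1 < t_2$ to be the main obstacle; everything else is homogeneity of $\Nxy{\cdot}{\cdot}$ and the triangle-type inequality $\Nxy{z}{\s{S}} \le \Nxy{\s{S}'}{\s{S}}\Nxy{z}{\s{S}'}$ from \lemref{lem:normproperties}.
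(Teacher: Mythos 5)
There is a genuine gap: your argument stalls exactly where you say it does, and the missing ingredient is a quantitative fact you never establish, namely that the projection coefficient satisfies $\tfrac{\dd}{\mu\Nxy{x_{t_1}}{\s{W}}}<1$. This follows from \eqref{eq:xtlb1} together with the definition of $\dd$ and the monotonicity $\BXt{t_1-1}\leq\kk$: indeed $\mu\Nxy{x_{t_1}}{\s{W}}>\tfrac{\mu^2}{1-\mu}\tfrac{1}{\BXt{t_1-1}}\geq\tfrac{\mu^2}{1-\mu}\tfrac{1}{\kk}=\dd$ (this is the paper's inequality \eqref{eq:auxineq}). Once you have it, no contradiction bookkeeping is needed at all: since $x_{t_1}$ is a column of $X_{t_2-1}$ and $p_1$ is a sub-unit multiple of $x_{t_1}$, you get $p_1\in\cx{\s{X}_{t_2-1}}$ (a convex combination of $0$ and $x_{t_1}$), hence $\cx{\s{X}_{\tnull-1}\cup\s{p}_1}\subset\cx{\s{X}_{t_2-1}}$, and by property \ref{it:norm-4} of \lemref{lem:normproperties}, $\Nxy{p_2}{\s{X}_{\tnull-1}\cup\s{p}_1}\geq\Nxy{p_2}{\s{X}_{t_2-1}}>\dd$ directly from your (correct) step (1). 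Only one of the two terms in the max needs to exceed $\dd$, determined by which unstable transition came first, so the symmetric assumption and the ``two-sided'' elimination of cross terms you were hoping to organize are unnecessary. Your crude bound $\Nxy{p_2}{\s{X}_{t_2-1}}\leq\dd\bigl(1+\tfrac{\dd}{\mu\Nxy{x_{t_1}}{\s{W}}}\bigr)$ fails precisely because you did not know the second factor is below one; with it, the same substitution gives $\Nxy{p_2}{\s{X}_{t_2-1}}\leq\dd$, the desired contradiction.

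A secondary but real error: in step (3) you justify $\Nxy{p_1}{\s{X}_{t_1-1}}\leq\tfrac{\dd}{\mu\Nxy{x_{t_1}}{\s{W}}}$ by claiming $\Nxy{x_{t_1}}{\s{X}_{t_1-1}}\leq 1$. That is false — by \eqref{eq:xtlb2} and \eqref{eq:xtlb1}, $\Nxy{x_{t_1}}{\s{X}_{t_1-1}}>\mu\Nxy{x_{t_1}}{\s{W}}>\dd>1$, since $x_{t_1}$ is an unstable-transition state and is \emph{not} a column of $X_{t_1-1}$. The statement you actually need is $\Nxy{x_{t_1}}{\s{X}_{t_2-1}}\leq 1$, which holds because $t_1\leq t_2-1$; combined with the coefficient bound above this is exactly the paper's chain of inclusions $\cx{\s{X}_{t_2-1}}\supset\cx{\s{X}_{\tnull-1}\cup\s{x}_{t_1}}\supset\cx{\s{X}_{\tnull-1}\cup\s{p}_1}$. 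So your overall strategy is the right one, but as written the proof is incomplete and contains one false intermediate claim.
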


\begin{proof}
Fix two arbitrary and distinct points $p_1,p_2 \in \c{P}_\mu$, $p_1\neq p_2$, then per definition of $\c{P}_\mu$ there are two corresponding elements $x_{t_1}, x_{t_2} \in \c{X}_\mu((x_t);\tnull)$ with $t_1 \neq t_2$ such that $p_1 = \dd(\mu \Nxy{x_{t_1}}{\s{W}})^{-1} x_{t_1}$ and $p_2 = \dd(\mu \Nxy{x_{t_2}}{\s{W}})^{-1} x_{t_2}$. We will prove the desired statement, by showing that depending on which unstable transition occurred first, i.e. $t_2>t_1$ or $t_1<t_2$, either $\Nxy{p_2}{\s{X}_{\tnull-1} \cup \s{p}_1}>\dd$ or $\Nxy{p_1}{\s{X}_{\tnull-1} \cup \s{p}_2}>\dd$ has to be satisfied. Inequality \eqref{eq:sep-p-ineq} then follows by taking the maximum of both cases. Hence, to complete our argument, w.l.o.g. we will assume the case $t_2>t_1$ and proceed to prove  $\Nxy{p_2}{\s{X}_{\tnull-1} \cup \s{p}_1}>\dd$; The case $t_1<t_2$ then follows by interchanging $t_1$ and $t_2$:\\
First, notice that since $\BXt{t_1-1} \leq \BXtnull$, we can conclude that for any $x_t\in \c{X}_{\mu}((x_t);\tnull)$, the following inequality is satisfied:
\begin{align}\label{eq:auxineq}
        \frac{\rad}{\mu \Nxy{x_t}{\s{W}}} \leq \frac{\mu^2 }{(1-\mu)} \frac{1}{\BXt{t-1}}\frac{1}{ \mu\Nxy{x_t}{\s{W}}} <1
\end{align}
Now, for $x_{t_2}$ recall from \eqref{eq:xtlb2} that 
\begin{align}\label{eq:sep-step1}
        \Nxy{p_2}{\s{X}_{t_2-1}} = \Nxy{\tfrac{\dd}{\mu\Nxy{x_{t_2}}{\s{W}}} {x_{t_2}}}{\s{X}_{t_2-1}} > \dd.
\end{align}
We will now use repeatedly the property \ref{it:norm-4} of \lemref{lem:normproperties}, to bound the left hand side of \eqref{eq:sep-step1} from above. To this end, consider first the following chain of inclusions:
\begin{align}
\notag\cx{\s{X}_{t_2-1}} \overset{a)}{\supset} \cx{\s{X}_{t_1}} &\overset{b)}{\supset} \cx{\s{X}_{\tnull-1} \cup \s{x}_{t_1}} \dots \\ 
\label{eq:sep-step2}&\quad \quad \dots\overset{c)}{\supset} \cx{\s{X}_{\tnull-1} \cup  \s{p}_1}
\end{align}
The inclusions $a)$, $b)$ follow directly from the definition of $X_{t}$. For inclusion $c)$, observe that $ p_1 = \rad (\mu \Nxy{x_t}{\s{W}})^{-1} x_t$ and recall from \eqref{eq:auxineq} that the scalar constant $\rad (\mu \Nxy{x_t}{\s{W}})^{-1}$ is less than one. Now, since $\cx{X_{\tnull} \cup \s{x}_{t_1}}$ is a symmetric convex body we know that it contains $0$. Hence we can view $ p_1$ as a convex combination of $0$ and $x_t$, which proves that $ p_ 1 \in \cx{\s{X}_{\tnull-1} \cup \s{x}_t}$ and therefore the set inclusion $c)$.

Now, using property \ref{it:norm-4} of \lemref{lem:normproperties} we can translate the inclusion \eqref{eq:sep-step2} into a chain of corresponding inequalities to bound the left hand side of \eqref{eq:sep-step1} and ultimately obtain:
$$\Nxy{p_2}{\s{X}_{\tnull-1} \cup \s{p}_1}>\dd.$$
\end{proof}
\noindent The term on the left hand side of inequality \eqref{eq:sep-p-ineq} can be seen as a binary operation $\dxyK{\anon}{\anon}{\s{X}_{\tnull-1}}$ on the points $p_1$ and $p_2$ which measures a particular notion of distance characterized by the symmetric convex body $\cx{\s{X}_{\tnull-1}}$. We will define this operation more generally for some set $\s{B}$ below and can use it to restate inequality \eqref{eq:sep-p-ineq} as $$\dxyK{p_1}{p_2}{\s{X}_{\tnull-1}}>\dd$$
\begin{defn}\label{def:xyK}
        Let $\s{B}$ be some bounded set in $\R^n$ and define the map $\dxyK{\anon}{\anon}{\s{B}} : \R^n \times \R^n \mapsto \R_{\geq 0}$ for each $x,y \in \R^n$ as 
        \begin{align}\label{eq:equivdxyK}
                \dxyK{x}{y}{\s{B}} := \min\left\{\;r\;\left|\begin{array}{c} \cx{\s{B} \cup \s{x}} \subset r \cx{\s{B} \cup \s{y}}\\
                        \cx{\s{B} \cup \s{y}} \subset r \cx{\s{B} \cup \s{x}} 
                \end{array} \right.\right\} 
        \end{align}
        or equivalently as
        \begin{align} 
                \dxyK{x}{y}{\s{B}} := \max\{\Nxy{\s{B} \cup \s{x}}{\s{B} \cup \s{y}},\Nxy{\s{B} \cup \s{y}}{\s{B} \cup \s{x}} \}
        \end{align}
\end{defn}
\noindent The definition of $d(\anon,\anon;\s{B})$ in the form of equation \eqref{eq:equivdxyK} gives a geometric intuition as to why the value $d(x,y;\s{B})$ can be viewed as a notion of distance between $x$ and $y$. As an example, consider in \figref{fig:distance} the two points $x,y \in\R^2$ which satisfy $d(x,y;\s{B})>r$ and where $\s{B}$ is taken as the box $[-1,1]\times [-1,1]$ in $\R^2$; Equation \eqref{eq:equivdxyK} then implies that $x$ lies outside of the set $r \cx{\s{B} \cup \s{y}}$ and $y$ lies outside of $r \cx{\s{B} \cup \s{x}}$. This scenario is presented in \figref{fig:distance} for $r=1.3$ and illustrates how the condition $d(x,y;\s{B})>r$ enforces a separation between $x$ and $y$. 
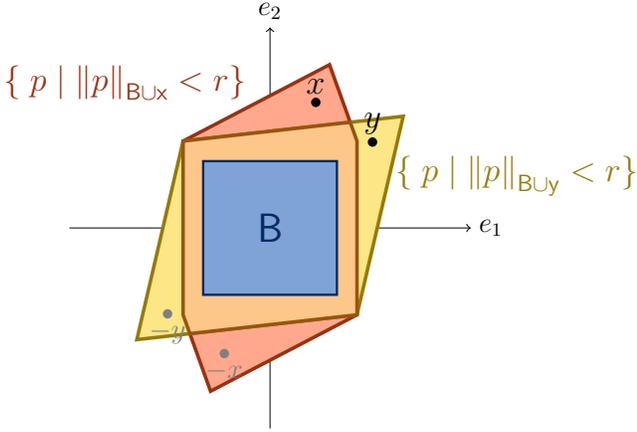
\begin{figure}
        \centering
        \resizebox{!}{0.35\textwidth}{
        \begin{tikzpicture}[font=\large] 

    \definecolor{K}{HTML}{0047AB}
    \definecolor{Kcopy}{HTML}{66B2DD}
    \definecolor{blowup}{HTML}{FF511C}
    \definecolor{cone}{HTML}{FACC0E}
    \definecolor{ballpair}{HTML}{00C855}
    \colorlet{ball}{blowup!50!cone}

    \def \light{ 50 }
    \def \dark{ 60 }
    \def \lthick{ 1.5pt }
    \def \lthin{ 1pt }
    
    \def \tta{ -10.00000000000000 } 
    \def \k{    -3.00000000000000 } 
    \def \l{     6.00000000000000 } 
    \def \d{     5.00000000000000 } 
    \def \h{     7.00000000000000 } 
    \def \r{     1.0000 } 
    \def \lam{   2.0 } 
    \def \xlen{   6.0 }
    \def \ylen{   6.0 }
    \def \pang{    70.0 } 
    \def \prad{   2.0}
    \def \qang{    40.0 } 
    \def \qrad{   2.0}
    \def \del{   1.3 } 
    \pgfmathparse{1/\del}
                \let\invdel\pgfmathresult

    \def \beta{ 20.0}

    \clip (-4.0,-3.5) rectangle (6.0,3.5);
    \coordinate (A) at (-\r,-\r); 
    \coordinate (B) at (\r,-\r); 
    \coordinate (C) at (\r,\r); 
    \coordinate (D) at (-\r,\r); 
    \coordinate (O) at (0,0);
    \coordinate (P) at ({\prad*cos(\pang)},{\prad*sin(\pang)});
    \coordinate (Pn) at ({-\prad*cos(\pang)},{-\prad*sin(\pang)});
    \coordinate (Q) at ({\qrad*cos(\qang)},{\qrad*sin(\qang)});
    \coordinate (Qn) at ({-\qrad*cos(\qang)},{-\qrad*sin(\qang)});
    \coordinate (A2) at ({-\lam*\r},{-\lam*\r}); 
    \coordinate (B2) at ({\lam*\r},{-\lam*\r}); 
    \coordinate (C2) at ({\lam*\r},{\lam*\r}); 
    \coordinate (D2) at ({-\lam*\r},{\lam*\r}); 
    \coordinate (xleft) at ({-0.5*\xlen},0);
    \coordinate (xright) at ({0.5*\xlen},0);
    \coordinate (ydown) at (0,{-0.5*\ylen});
    \coordinate (yup) at (0,{0.5*\ylen});
    \coordinate (P1) at (intersection of O--P and C--D);
    \coordinate (P2) at (intersection of O--P and C2--D2);
    \coordinate (Pn1) at (intersection of O--Pn and A--B);
    \coordinate (Pn2) at (intersection of O--Pn and A2--B2);
    \coordinate (Pinv) at ($\invdel*(P2)$);
    \coordinate (Ctop) at ($(C)!10!(Pinv)$);
    \coordinate (Dtop) at ($(D)!10!(Pinv)$);

    \fill[gray]    (Q) circle [radius=2pt]; 

    \draw[->] (xleft)--(xright) node[right]{$e_1$};
    \draw[->] (ydown)--(yup) node[above]{$e_2$};





    \coordinate (IX) at (intersection of Q--D and C--P);

    \filldraw[line width = \lthick, fill = blowup!\light, draw = blowup!\dark!black] 
          ($\del*(A)$) -- ($\del*(Pn)$) -- 
          ($\del*(Pn)$) -- ($\del*(B)$) -- 
          ($\del*(B)$) -- ($\del*(C)$) -- 
          ($\del*(C)$) -- ($\del*(P)$) -- 
          ($\del*(P)$) -- ($\del*(D)$) -- cycle;

    \filldraw[line width = \lthick, fill = cone!\light, draw = cone!\dark!black ] 
          ($\del*(D)$) -- ($\del*(Qn)$) -- 
          ($\del*(Qn)$) -- ($\del*(B)$) -- 
          ($\del*(B)$) -- ($\del*(Q)$) -- 
          ($\del*(Q)$) -- ($\del*(D)$);

      \filldraw[line width = \lthick, fill = ball!\light, draw = ball!\dark!black] 
      ($\del*(A)$) -- ($-\del*(IX)$) -- 
      ($-\del*(IX)$) -- ($\del*(B)$) -- 
      ($\del*(B)$) -- ($\del*(C)$) -- 
      ($\del*(C)$) -- ($\del*(IX)$) -- 
      ($\del*(IX)$) -- ($\del*(D)$) -- cycle;

    \draw[line width = \lthin, fill = K!\light, draw=K!\dark!black]
    (A) -- (B) -- (C) -- (D) -- cycle;


  \fill[black]    (P) circle [radius=2pt]; 
  \draw (P) node [above,black]     {\Large $x$};
  \fill[black]    (Q) circle [radius=2pt];
  \draw (Q) node [above,black]     {\Large $y$};
  \fill[gray]    (Pn) circle [radius=2pt]; 
  \draw (Pn) node [below,gray]     {$-x$};
  \fill[gray]    (Qn) circle [radius=2pt]; 
  \draw (Qn) node [below,gray]     {$-y$};
    \begin{scope}
    \draw (O) node [line width = \lthick, K!\dark!black]     {\LARGE $\s{B}$};
    \end{scope}

    \draw ($(Q)+(0.2,-0.5)$) node [line width = \lthick, right,cone!\dark!black] {\Large $\{\;p\;|\; \Nxy{p}{\s{B} \cup \s{y}} < r \}$};

    \draw ($(P)+(-0.9,0.3)$) node [line width = \lthick, left,blowup!\dark!black] {\Large $\{\;p\;|\; \Nxy{p}{\s{B} \cup \s{x}} < r \}$};

\end{tikzpicture}
        }
        \caption{Geometry of the distance function $d(\anon,\anon;\s{B})$: The points $x$ and $y$ satisfy the inequality $d(x,y;\s{B})>r$ in $n=2$, where $\s{B}$ is taken as the two dimensional cube $\s{B}:=\{(x_1,x_2)|{\;|x_i|\leq 1 }\}$ and $r=1.3$.}
        \label{fig:distance}
\end{figure}
We will call $x$ and $y$ to be $(r;\s{B})$-separated.
More generally, we will introduce the following terminology:
\begin{defn}
        A set $\s{P} \subset \R^n$ is $(\eps;\s{B})$-separated (with suitable set $\s{B}$), if $d(p,p';\s{B})>\eps$ holds for any $p,p'\in \s{P}$, $p\neq p'$.
\end{defn}
In terms of the above definition, \lemref{lem:sepcondition-nometric} states that $\c{P}_\mu$ is a $(\dd;\s{X}_{\tnull-1})$-\textit{separated} subset of $\tfrac{\dd}{\mu} \s{W}$. 

\subsection{Bounding the cardinality of $\c{P}_\mu$ through volume bounds }\label{sec:volbound}
 \noindent In this section we will complete the proof of \thmref{thm:finiteunstable}, by showing that any $(\eps,\s{B})$-separated subset of some bounded set $\s{S}$ has to be a finite set. This argument then leads to the results in \thmref{thm:finiteunstable}, since $\c{P}_\mu$ is a $(\dd;\s{X}_{\tnull-1})$-\textit{separated} subset of $\tfrac{\dd}{\mu} \s{W}$.
 
To illustrate the general idea, assume we would like construct a $(\eps;\s{B})$-separated set $\s{P}=\{p_1,p_2,\dots,\}$, contained within some larger bounded set $\s{S}\in \R^2$. In particular, assume we start with some $p_1\in \s{S}$, pick $p_2 \in \s{S}$ such that $d(p_1,p_2;\s{B}) > \eps$ and proceed to select each $p_n$ s.t. $d(p_n,p_{k};\s{B}) > \eps$ holds for all previous $k<n$. As illustrated in \figref{fig:distance}, it becomes intuitively clear that any constructed $(\eps;\s{B})$-separated subset $\s{P}$ in $\s{S}$ has to have finite cardinality, as it becomes increasingly harder to find "enough" room for a new point $p_n\in\s{S}$  which respects the separation condition w.r.t to previous points $d(p_n,p_{k};\s{B}) > \eps$, $k<n$.


 In the next section we will show by means of a volumetric argument that this intuition extends to $n$-dimensions and leads to a cardinality bound on the set $\c{P}_\mu$.
Denote $\s{P}$ to represent some $(\dd;\s{X}_{\tnull-1})$-\textit{separated} subset of $\tfrac{\dd}{\mu} \s{W}$, i.e. not necessarily $\c{P}_\mu$. We will bound $|\s{P}|$ by first constructing a corresponding covering set $\c{N}(\s{P}) \supset \s{P}$ and then showing that the volume of $\c{N}(\s{P})$ is bounded below and above as
\begin{align*}
        |\s{P}|c_\mathrm{in} \leq \vol(\c{N}(\s{P})) \leq C_{\mathrm{out}}.
\end{align*}
with some constants $c_\mathrm{in}$, $C_{\mathrm{out}}$ independent of $\s{P}$. The desired cardinality bound then takes the form $|\s{P}| \leq \tfrac{C_{\mathrm{out}}}{c_\mathrm{in}}$. 

The next sections will discuss: 1) the set $\c{N}(\s{P})$, 2) establishing the lower bound $|\s{P}|c_\mathrm{in}$, 3) proving the upper bound $C_{\mathrm{out}}$ and 4) formulating the statement of \thmref{thm:finiteunstable}.

\subsubsection{Covering set $\c{N}(\s{P})$}

\noindent For some point $p\in\R^n$ and $\eps>1$, define $\s{N}(p;\eps,\s{B})$ to stand for the set 
\begin{align}\label{eq:NpdK}
        \s{N}(p;\eps,\s{B}):= \left\{p' \in \R^n\left|\dxyK{p}{p'}{\s{B}}\leq \eps  \right. \right\}.
\end{align} 
See \figref{fig:sketch2} as an example for the geometry of the set $\s{N}(p;\eps,\s{B})$ in $\R^2$ with $\s{B} = [-1,1]\times[-1,1]$.
For a set $\s{P}$, correspondingly define the set $\c{N}(\s{P})$ as the following union of sets
\begin{align}\label{eq:collection}
        \c{N}(\s{P}) := \medcup\limits_{p \in \s{P}} \s{N}(p;\rad^{\frac{1}{2}},\s{X}_{\tnull-1}).
\end{align}
$\c{N}(\s{P})$ is a cover of $\s{P}$, since it can be easily verified that $\c{N}(\s{P})\subset \s{P}$.
It can be easily seen that the map $\dxyK{\anon}{\anon}{\s{B}}$ inherits the following properties from \lemref{lem:K1K2prop}:
\begin{lem}\label{lem:multdist} For all $x,y,z \in \R^n$ holds:
        \begin{enumerate}[label=(\roman*)]
                \item\label{it:met-1} $\dxyK{x}{x}{\s{B}}  = 1$
                \item\label{it:met-2} $\dxyK{x}{y}{\s{B}}  =\dxyK{y}{x}{\s{B}} = \dxyK{y}{-x}{\s{B}}$ 
                \item\label{it:met-3} $\dxyK{x}{y}{\s{B}} \leq \dxyK{x}{z}{\s{B}} \dxyK{z}{y}{\s{B}} $
        \end{enumerate}
\end{lem}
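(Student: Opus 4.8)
The plan is to read off each of the three properties from the corresponding property of the set-to-set functional $\Nxy{\anon}{\anon}$ collected in \lemref{lem:K1K2prop}, using that by \defref{def:xyK} we have $\dxyK{x}{y}{\s{B}} = \max\{\Nxy{\s{B}\cup\s{x}}{\s{B}\cup\s{y}},\,\Nxy{\s{B}\cup\s{y}}{\s{B}\cup\s{x}}\}$, equivalently the least $r$ with both $\cx{\s{B}\cup\s{x}}\subseteq r\,\cx{\s{B}\cup\s{y}}$ and $\cx{\s{B}\cup\s{y}}\subseteq r\,\cx{\s{B}\cup\s{x}}$. I would switch freely between these two equivalent forms depending on which is more convenient for the property at hand.

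For property \ref{it:met-1}, the maximum in the definition collapses to the single term $\Nxy{\s{B}\cup\s{x}}{\s{B}\cup\s{x}}$, and since $\cx{\s{S}}$ is precisely the closed unit ball of $\Nxy{\anon}{\s{S}}$ on $\sn{\s{S}}$ (a fact already used throughout and recorded in \lemref{lem:normproperties}), we have $\Nxy{\s{S}}{\s{S}} = \max_{z\in\cx{\s{S}}}\Nxy{z}{\s{S}} = 1$; taking $\s{S} = \s{B}\cup\s{x}$ gives $\dxyK{x}{x}{\s{B}} = 1$. For property \ref{it:met-2}, symmetry $\dxyK{x}{y}{\s{B}} = \dxyK{y}{x}{\s{B}}$ is immediate because the defining expression is symmetric in its two arguments; and $\dxyK{y}{-x}{\s{B}} = \dxyK{y}{x}{\s{B}}$ follows from $\cx{\s{B}\cup\{-x\}} = \cx{\s{B}\cup\s{x}}$, which holds because flipping the sign of a generator only flips the sign of its coefficient in $\sum_i\lambda_i v_i$ and leaves $\sum_i|\lambda_i|$ unchanged, so the norms $\Nxy{\anon}{\s{B}\cup\{-x\}}$ and $\Nxy{\anon}{\s{B}\cup\s{x}}$ agree and hence so do the values of $\dxyK{\anon}{\anon}{\s{B}}$ built from them; combined with symmetry this gives \ref{it:met-2}.

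For property \ref{it:met-3} I would use the containment form: writing $r_1 := \dxyK{x}{z}{\s{B}}$ and $r_2 := \dxyK{z}{y}{\s{B}}$, we have $\cx{\s{B}\cup\s{x}}\subseteq r_1\,\cx{\s{B}\cup\s{z}}\subseteq r_1 r_2\,\cx{\s{B}\cup\s{y}}$ and, symmetrically, $\cx{\s{B}\cup\s{y}}\subseteq r_2 r_1\,\cx{\s{B}\cup\s{x}}$, so minimality forces $\dxyK{x}{y}{\s{B}}\le r_1 r_2$; alternatively this is just the submultiplicativity $\Nxy{\s{S}_1}{\s{S}_3}\le\Nxy{\s{S}_1}{\s{S}_2}\Nxy{\s{S}_2}{\s{S}_3}$ of \lemref{lem:K1K2prop} applied with $\s{S}_2 = \s{B}\cup\s{z}$ in both directions, followed by taking the maximum over the two orderings. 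The whole argument is bookkeeping; the closest thing to a subtlety, and the one point I would make sure is genuinely in place, is the equivalence of the two descriptions of $\dxyK{\anon}{\anon}{\s{B}}$ in \defref{def:xyK} together with attainment of the relevant infima (so the ``least $r$'' wording is legitimate) and the fact that $\cx{\s{B}\cup\s{x}}$ is a symmetric convex body in the cases of interest — all supplied by \lemref{lem:normproperties} and \lemref{lem:K1K2prop} — so I do not anticipate a real obstacle.
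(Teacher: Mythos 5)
Your proposal is correct and follows essentially the same route as the paper, which simply notes that $\dxyK{x}{y}{\s{B}}$ inherits the three properties from \lemref{lem:K1K2prop} applied to the sets $\s{B}\cup\s{x}$, $\s{B}\cup\s{y}$, $\s{B}\cup\s{z}$; your containment chain for \ref{it:met-3} is exactly the paper's proof of \lemref{lem:K1K2prop}\ref{it:dist-3}, and your explicit observations ($\Nxy{\s{S}}{\s{S}}=1$ via the unit ball, and $\cx{\s{B}\cup\{-x\}}=\cx{\s{B}\cup\s{x}}$ for the sign flip) just fill in details the paper leaves implicit.
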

\noindent As shown in corollary \ref{coro:seppack}, the property \ref{it:met-3} of lemma \ref{lem:multdist} can be used to show that for $(\dd,\s{B})$-separated sets $\s{P}$, the sets in the union \eqref{eq:collection} are pairwise disjoint and we can therefore evaluate the volume $\vol(\c{N}(\s{P}))$ as the sum:
\begin{align}\label{eq:volfam}
        \vol(\c{N}(\s{P}))= \sum_{p \in \s{P}} \vol\big(\s{N}(p;\rad^{\frac{1}{2}},\s{X}_{\tnull-1}) \big).
\end{align}
\begin{coro}[of \lemref{lem:multdist}]\label{coro:seppack}
        If for some $x,y \in \R^n$ holds $\dxyK{x}{y}{\s{B}} > \eps $, then $\s{N}(x;\eps^{\frac{1}{2}},\s{B}) \cap \s{N}(y;\eps^{\frac{1}{2}},\s{B}) = \emptyset$
        \end{coro}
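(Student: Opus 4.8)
The plan is to obtain \corref{coro:seppack} as an immediate consequence of the multiplicative triangle inequality, property \ref{it:met-3} of \lemref{lem:multdist}, by a one-line proof by contradiction — exactly the multiplicative analogue of the classical fact that in a metric space, balls of radius $\eps/2$ around two points at distance $>\eps$ are disjoint. Concretely, I would suppose for contradiction that the intersection $\s{N}(x;\eps^{\frac{1}{2}},\s{B}) \cap \s{N}(y;\eps^{\frac{1}{2}},\s{B})$ is nonempty and pick a point $z$ lying in both sets.

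By the definition \eqref{eq:NpdK} of $\s{N}(\anon;\anon,\s{B})$, the membership $z \in \s{N}(x;\eps^{\frac{1}{2}},\s{B})$ gives $\dxyK{x}{z}{\s{B}} \leq \eps^{\frac{1}{2}}$, and $z \in \s{N}(y;\eps^{\frac{1}{2}},\s{B})$ gives $\dxyK{y}{z}{\s{B}} \leq \eps^{\frac{1}{2}}$; using the symmetry property \ref{it:met-2} of \lemref{lem:multdist} I rewrite the latter as $\dxyK{z}{y}{\s{B}} \leq \eps^{\frac{1}{2}}$. Then property \ref{it:met-3} of \lemref{lem:multdist} yields
\[
        \dxyK{x}{y}{\s{B}} \;\leq\; \dxyK{x}{z}{\s{B}}\,\dxyK{z}{y}{\s{B}} \;\leq\; \eps^{\frac{1}{2}}\cdot \eps^{\frac{1}{2}} \;=\; \eps,
\]
which contradicts the hypothesis $\dxyK{x}{y}{\s{B}}>\eps$. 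Hence no such $z$ exists and the two sets are disjoint.

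I do not expect any real obstacle here: the entire content is the submultiplicativity \ref{it:met-3} together with symmetry \ref{it:met-2}, and the passage $\eps^{\frac{1}{2}}\cdot\eps^{\frac{1}{2}}=\eps$ is the whole reason the exponent $\tfrac{1}{2}$ appears in the definition \eqref{eq:collection} of $\c{N}(\s{P})$. The only point worth a sentence of care is that $\eps^{\frac{1}{2}}$ be a sensible "radius", i.e. that these sublevel sets behave like balls — but since in the application $\eps=\dd>1$ and $\dxyK{\anon}{\anon}{\s{B}}\geq 1$ always (by \ref{it:met-1}), there is nothing degenerate, and the corollary then feeds directly into the disjointness needed to evaluate $\vol(\c{N}(\s{P}))$ as the sum \eqref{eq:volfam}.
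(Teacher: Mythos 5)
Your proof is correct and is essentially the paper's own argument: assume a common point $z$, use the membership conditions $\dxyK{x}{z}{\s{B}} \leq \eps^{\frac{1}{2}}$, $\dxyK{z}{y}{\s{B}} \leq \eps^{\frac{1}{2}}$ (with symmetry \ref{it:met-2} invoked where the paper leaves it implicit), and apply the multiplicative triangle inequality \ref{it:met-3} to contradict $\dxyK{x}{y}{\s{B}}>\eps$. No gaps; your closing remark about why the exponent $\tfrac{1}{2}$ appears matches the role the corollary plays in establishing \eqref{eq:volfam}.
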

        \begin{proof}
        For the sake of proving the statement through contradiction, assume that there was some point $z\in\s{N}(x;\eps^{\frac{1}{2}},\s{B}) \cap \s{N}(y;\eps^{\frac{1}{2}},\s{B})$. Then, we know that $z$ satisfies both $\dxyK{x}{z}{\s{B}} \leq \eps^{\frac{1}{2}}$ and $\dxyK{y}{z}{\s{B}} \leq \eps^{\frac{1}{2}}$. But from the property \ref{it:met-3} of \lemref{lem:multdist}, we also have to conclude $$\dxyK{x}{y}{\s{B}} \leq \dxyK{x}{z}{\s{B}}\dxyK{z}{y}{\s{B}}\leq \eps $$ which leads to the intended contradiction.
\end{proof}

\subsubsection{Lower bound on volume of $\c{N}(\s{P})$}
To lower-bound the quantity \eqref{eq:volfam}, we will make use of the following lemma:
\begin{lem}\label{lem:ball}
Let $x,y\in \R^n$, then if $y = x + (\eps-1)p$ for some $p\in \s{B}$ and $\eps>1$, then it holds $d(x,y;\s{B}) \leq \eps$.
\end{lem}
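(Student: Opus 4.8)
The plan is to unwind \defref{def:xyK}: showing $\dxyK{x}{y}{\s{B}}\le\eps$ amounts to verifying the two inclusions $\cx{\s{B}\cup\s{x}}\subset\eps\,\cx{\s{B}\cup\s{y}}$ and $\cx{\s{B}\cup\s{y}}\subset\eps\,\cx{\s{B}\cup\s{x}}$, since then $\eps$ is a feasible value for the minimization defining $\dxyK{x}{y}{\s{B}}$ and the bound follows. Because $\cx{\cdot}$ is (the closure of) an absolutely convex hull and multiplication by $\eps>0$ preserves both closedness and absolute convexity, it suffices to check that each \emph{generator} of the inner set lies in the outer set; the inclusion of the whole closed absolutely convex hull is then automatic, as $\eps\,\cx{\cdot}$ is itself closed and absolutely convex.

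Consider first $\cx{\s{B}\cup\s{y}}\subset\eps\,\cx{\s{B}\cup\s{x}}$, whose inner set is generated by the points of $\s{B}$ together with $y$. Every $b\in\s{B}$ already satisfies $b\in\cx{\s{B}}\subset\cx{\s{B}\cup\s{x}}\subset\eps\,\cx{\s{B}\cup\s{x}}$, using $0\in\cx{\cdot}$ and $\eps\ge1$ (basic properties of $\cx{\cdot}$, see \lemref{lem:normproperties}). For the generator $y$, I would use the hypothesis $y=x+(\eps-1)p$ to write $\tfrac1\eps\,y=\tfrac1\eps\,x+\tfrac{\eps-1}{\eps}\,p$, which is a convex combination (nonnegative weights summing to one) of $x\in\s{B}\cup\s{x}$ and $p\in\s{B}\subset\s{B}\cup\s{x}$; hence $\tfrac1\eps\,y\in\cx{\s{B}\cup\s{x}}$, i.e. $y\in\eps\,\cx{\s{B}\cup\s{x}}$.

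For the reverse inclusion $\cx{\s{B}\cup\s{x}}\subset\eps\,\cx{\s{B}\cup\s{y}}$ the points of $\s{B}$ are handled exactly as above, and the only nontrivial generator is $x=y-(\eps-1)p$, for which I would write $\tfrac1\eps\,x=\tfrac1\eps\,y-\tfrac{\eps-1}{\eps}\,p$; this is an \emph{absolutely} convex combination of $y\in\s{B}\cup\s{y}$ and $p\in\s{B}\subset\s{B}\cup\s{y}$ (the coefficient on $p$ is negative, but $\tfrac1\eps+\tfrac{\eps-1}{\eps}=1$), so $\tfrac1\eps\,x\in\cx{\s{B}\cup\s{y}}$ and hence $x\in\eps\,\cx{\s{B}\cup\s{y}}$. (Alternatively, this inclusion follows from the first one via the symmetry $\dxyK{x}{y}{\s{B}}=\dxyK{y}{x}{\s{B}}$ in \lemref{lem:multdist}.) With both inclusions established, $\dxyK{x}{y}{\s{B}}\le\eps$.

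The one point that needs care — essentially the only obstacle — is that $\s{B}$ is assumed merely bounded, not symmetric, so one cannot replace $-p$ by an element of $\s{B}$. This is exactly why the reverse inclusion must be argued using the \emph{absolute} convex hull, which accommodates the combination $\tfrac1\eps\,y-\tfrac{\eps-1}{\eps}\,p$ with a negative weight on $p\in\s{B}$ rather than an honest convex combination; everything else is routine bookkeeping with the defining properties of $\cx{\cdot}$ collected in the appendix.
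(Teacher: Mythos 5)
Your proposal is correct and follows essentially the same route as the paper's proof: the paper likewise verifies $\Nxy{y}{\s{B}\cup\s{x}}\leq\eps$ via the decomposition $y=x+(\eps-1)p$ (triangle inequality) and $\Nxy{x}{\s{B}\cup\s{y}}\leq\eps$ by writing $x=\eps\bigl(\tfrac1\eps y+\tfrac{\eps-1}{\eps}(-p)\bigr)$, using the symmetry of the absolute convex hull to absorb the negative weight on $p$. Your explicit remark that checking the generators suffices (since the elements of $\s{B}$ are trivially contained) is a minor tidying of a step the paper leaves implicit; otherwise the arguments coincide.
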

\begin{proof}
We need to prove $\Nxy{x}{\s{B} \cup \s{y}} \leq \eps$ and $\Nxy{y}{\s{B} \cup \s{x}} \leq \eps$. 
\underline{$\Nxy{y}{\s{B} \cup \s{x}} \leq \eps$}: From the triangle inequality, we obtain
$$\Nxy{y}{\s{B} \cup \s{x}}=\Nxy{x + (\eps-1)p}{\s{B} \cup \s{x}} \leq \Nxy{x}{\s{B} \cup \s{x}} + (\eps-1)\Nxy{p}{\s{B} \cup \s{x}}  $$
and using the fact that $x,p \in \cx{\s{B} \cup \s{x}}$ by the norm definition \eqref{eq:defrepnorm} we get $\Nxy{y}{\s{B} \cup \s{x}}\leq 1+(\eps-1) = \eps$.\\
\underline{$\Nxy{x}{\s{B} \cup \s{y}} \leq \eps$}: Rewrite $x$ as $x = \eps\left(\tfrac{1}{\eps}(y) + \tfrac{\eps-1}{\eps}(-p) \right)$ and notice that $-p,y \in \cx{\s{B}\cup\s{y}}$, which shows that $x \in \eps \cx{\s{B}\cup\s{y}}$. Hence, via the norm definition \eqref{eq:defrepnorm} we conclude $\Nxy{x}{\s{B} \cup \s{y}}\leq \eps$. 
\end{proof}
\begin{figure*}
        \centering
        \resizebox{!}{0.3\textwidth}{
        \begin{tikzpicture}[font=\large] 


    \definecolor{K}{HTML}{0047AB}
    \definecolor{Kcopy}{HTML}{66B2DD}
    \definecolor{blowup}{HTML}{FF511C}
    \definecolor{cone}{HTML}{FFBE00}
  
    \colorlet{ball}{blowup!50!cone}
    \def \light{ 50 }
    \def \dark{ 60 }
    \def \lthick{ 1.5pt }
    \def \tta{ -10.00000000000000 } 
    \def \k{    -3.00000000000000 } 
    \def \l{     6.00000000000000 } 
    \def \d{     5.00000000000000 } 
    \def \h{     7.00000000000000 } 
    \def \r{     1.0000 } 
    \def \lam{   2.0 } 
    \def \xlen{   11.0 }
    \def \ylen{   6.0 }
    \def \pang{    70.0 } 
    \def \prad{   1.0}
    \def \del{   1.6 } 
    \pgfmathparse{1/\del}
                \let\invdel\pgfmathresult

    \def \beta{ 20.0}

    \clip (-6,-3.5) rectangle (6,3.5);
    \coordinate (A) at (-\r,-\r); 
    \coordinate (B) at (\r,-\r); 
    \coordinate (C) at (\r,\r); 
    \coordinate (D) at (-\r,\r); 
    \coordinate (O) at (0,0);
    \coordinate (P) at ({\prad*cos(\pang)},{\prad*sin(\pang)});
    \coordinate (Pn) at ({-\prad*cos(\pang)},{-\prad*sin(\pang)});
    \coordinate (A2) at ({-\lam*\r},{-\lam*\r}); 
    \coordinate (B2) at ({\lam*\r},{-\lam*\r}); 
    \coordinate (C2) at ({\lam*\r},{\lam*\r}); 
    \coordinate (D2) at ({-\lam*\r},{\lam*\r}); 
    \coordinate (xleft) at ({-0.5*\xlen},0);
    \coordinate (xright) at ({0.5*\xlen},0);
    \coordinate (ydown) at (0,{-0.5*\ylen});
    \coordinate (yup) at (0,{0.5*\ylen});
    \coordinate (P1) at (intersection of O--P and C--D);
    \coordinate (P2) at (intersection of O--P and C2--D2);
    \coordinate (Pn1) at (intersection of O--Pn and A--B);
    \coordinate (Pn2) at (intersection of O--Pn and A2--B2);
    \coordinate (Q) at ($(O)!0.75!\beta:(\lam,0)$);
    \coordinate (Pinv) at ($\invdel*(P2)$);
    \coordinate (Ctop) at ($(C)!10!(Pinv)$);
    \coordinate (Dtop) at ($(D)!10!(Pinv)$);
    \coordinate (ConeLabelPlus) at (1.5,2.5);
    \coordinate (blowuplabel) at (1.8,-0.5);
    \coordinate (ballstartarrow) at ($(P2)-(\del-1,0)-(0.4,0) $);
    \coordinate (ballendarrow) at (-3,1);
    \coordinate (ballstartarrow2) at ($(Pn2)-(\del-1,0)+(-0.2,0.4) $);
    \coordinate (ballendarrow2) at ($ (ballendarrow) -(1.0,0.5)$);
    \coordinate (Kcopyarrowstart) at  ($(P2)+0.7*(\del-1,0) $);
    \coordinate (Kcopyarrowstart2) at  ($(Pn2)+0.5*(\del-1,0) $);
    \coordinate (Kcopylabel) at (2.8, 1);
    \coordinate (Kcopylabel2) at (2.8, -2);

    \fill[black]  (A) circle [radius=2pt]; 
    \fill[black]    (B) circle [radius=2pt]; 
    \fill[black]  (C) circle [radius=2pt]; 
    \fill[black]    (D) circle [radius=2pt]; 
    \fill[gray]    (P1) circle [radius=2pt]; 
    \fill[gray]    (Pn1) circle [radius=2pt]; 
    \fill[gray]    (Q) circle [radius=2pt]; 



    \filldraw[line width = \lthick, fill= cone!\light, draw = cone!\dark!black] 
      (Ctop) -- (Pinv) -- (Dtop);

    \filldraw[line width = \lthick, fill= cone!\light, draw = cone!\dark!black] 
      ($-1*(Ctop)$) -- ($-1*(Pinv)$) -- ($-1*(Dtop)$);

    \filldraw[line width = \lthick, fill = blowup!\light, draw = blowup!\dark!black] 
          ($\del*(A)$) -- ($\del*(Pn2)$) -- 
          ($\del*(Pn2)$) -- ($\del*(B)$) -- 
          ($\del*(B)$) -- ($\del*(C)$) -- 
          ($\del*(C)$) -- ($\del*(P2)$) --
          ($\del*(P2)$) -- ($\del*(D)$) -- cycle;

    \begin{scope}
      \clip ($\del*(A)$) -- ($\del*(Pn2)$) -- 
          ($\del*(Pn2)$) -- ($\del*(B)$) -- 
          ($\del*(B)$) -- ($\del*(C)$) -- 
          ($\del*(C)$) -- ($\del*(P2)$) --
          ($\del*(P2)$) -- ($\del*(D)$) -- cycle;

      \fill[line width = \lthick, fill = ball!\light, draw = ball!\dark!black ] 
      (Ctop) -- (Pinv) -- (Dtop);
      \fill[line width = \lthick, fill = ball!\light, draw = ball!\dark!black ] 
      ($-1*(Ctop)$) -- ($-1*(Pinv)$) -- ($-1*(Dtop)$);
      
    \end{scope}
    \draw[->] (xleft)--(xright) node[right]{$e_1$};
    \draw[->] (ydown)--(yup) node[above]{$e_2$};
    \filldraw[line width = \lthick, fill = K!\light, draw=K!\dark!black] 
          (A) -- (B) -- (C) -- (D) -- cycle;



    \newcommand{\transK}[2]{
      \fill[line width = \lthick, fill = Kcopy!\light, draw = Kcopy!\dark!black]
          ($#1-(#2,#2)$) rectangle ($#1+(#2,#2)$);
      }

    \transK{(P2)}{ {\r*(\del-1)} };
    \transK{-1*(P2)}{ {\r*(\del-1)} };


  \fill[black]    (P2) circle [radius=2pt]; 
  \draw (P2) node [above,black]     {\Large $x$};
  \fill[gray!50!black]    (Pn2) circle [radius=2pt]; 
  \draw (Pn2) node [below,gray!50!black]     { $-x$};

  \draw (O) node [K!\dark!black]     {\LARGE $\s{B}$};
  \draw (ConeLabelPlus) node [right, cone!\dark!black] 
     {\Large $\{p\;|\;\Nxy{\s{x}}{\s{B}\cup \s{p}} \leq \eps  \} $};
  \draw ($-1*(ConeLabelPlus)$) node [left, cone!\dark!black] 
  {\Large $\{p\;|\;\Nxy{x}{\s{B}\cup \s{p}} \leq \eps  \} $};
  \draw (blowuplabel) node [right, blowup!\dark!black] 
  {\Large $\{p\;|\;\Nxy{p}{\s{B}\cup \s{x}} \leq \eps  \} $};

  \draw (ballendarrow) node (Nr) [left,ball!\dark!black] {\Large $\s{N}(x;\eps,\s{B})$ };
  \draw [->, ball!\dark!black, line width = \lthick] (ballstartarrow2) 
  to [out=180, in=260] (Nr.south);
  \draw [->, ball!\dark!black, line width = \lthick] (ballstartarrow) 
  to [out=180, in=45] (Nr.north east);

 \draw (Kcopylabel) node (Kcr) [right ,Kcopy!\dark!black, line width = \lthick] {\Large $(\eps-1)\s{B}$ };
 \draw (Kcopylabel2) node (Kcr2) [right ,Kcopy!\dark!black, line width = \lthick] {\Large $(\eps-1)\s{B}$ };
 \draw [->, Kcopy!\dark!black, line width = \lthick] (Kcopyarrowstart) 
 to [out=0, in=135] (Kcr.north west);
 \draw [->, Kcopy!\dark!black, line width = \lthick] (Kcopyarrowstart2) 
 to [out=0, in=180] (Kcr2.west);
 
    \end{tikzpicture}
        }
        
        \caption{$\s{N}(x;\eps,\s{B})$ is the intersection of the sets $\{p|\: \Nxy{x}{\s{B} \cup \s{p}} \leq \eps \}$ and $\{p|\:\Nxy{p}{\s{B} \cup \s{x}} \leq \eps \}$ and contains two translates of the set $(\eps-1)\s{B}$. In the picture, $\s{B}$ is taken as the two dimensional cube $\s{B}:=\{(x_1,x_2)|{\;|x_i|\leq 1 }\}$ and $\eps = 1.6$.}
        \label{fig:sketch2}
\end{figure*}

\noindent If we use the property \ref{it:met-2} of \lemref{lem:multdist}, then \lemref{lem:ball} tells us that each set $\s{N}(p;\eps,\s{B})$ contains the sets $\s{x}\oplus(\eps-1)\s{B}$ and $-\s{x}\oplus(\eps-1)\s{B}$, where the operator $\oplus$ denotes the Minkowski sum of two sets. For $\R^2$, \figref{fig:sketch2} illustrates the geometric relationship between the set $\s{N}(x;\eps,\s{B})$ and the set $\s{B}$ which is taken again to be the $\infty$-norm unit ball. We can see that the set $\s{N}(p;\eps,\s{B})$ is a union of two symmetrical polytopes and contains two non-overlapping translations (by the vector $x$ and $-x$) of the set $(\eps-1)\s{B}$. Now, using the fact that $n$-dimensional volume $\vol(\anon)$ is a homogenous function of degree $n$, we can obtain the following lower bound on the volume of any set $\s{N}(p;\eps,\s{B})$:
\begin{align}\label{eq:lbstep}
 \vol(\s{N}(p;\eps,\s{B})) \geq 2(\eps-1)^n \vol(\s{B}).
\end{align}
Combining this observation with our previous finding \eqref{eq:volfam}, we obtain the following lower bound on the volume of $\vol(\c{N}(\s{P}))$:
\begin{lem}\label{lem:lowerbound}
Let $\c{N}(\s{P})$ be the collection \eqref{eq:collection} corresponding to a $(\dd,\s{X}_{\tnull-1})$-separated ($\dd>1$) set $\s{P}$, then the volume $\vol(\c{N}(\s{P}))$ is bounded below by
\begin{align}\label{eq:Volnlower}
\vol(\c{N}(\s{P})) \geq 2(\rad^{\frac{1}{2}}-1)^n \vol(\cx{\s{X}_{\tnull-1}}) |\s{P}|,
\end{align}
 where $|\s{P}|$ denotes the cardinality of the set $\s{P}$.
\end{lem}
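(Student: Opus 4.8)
The plan is to assemble this bound from two ingredients already in place: the pairwise disjointness of the sets in the union \eqref{eq:collection}, and the per-point volume estimate \eqref{eq:lbstep}. Throughout I would abbreviate $\eps:=\rad^{\frac12}$.

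First I would invoke Corollary~\ref{coro:seppack}: because $\s{P}$ is $(\rad;\s{X}_{\tnull-1})$-separated with $\rad>1$, any two distinct $p,p'\in\s{P}$ satisfy $\dxyK{p}{p'}{\s{X}_{\tnull-1}}>\rad$, so the corollary (applied with $\rad$ in the role of its ``$\eps$'', hence with the square root $\rad^{\frac12}=\eps$) gives $\s{N}(p;\eps,\s{X}_{\tnull-1})\cap\s{N}(p';\eps,\s{X}_{\tnull-1})=\emptyset$. Since Lebesgue volume is additive over disjoint sets, this turns the union \eqref{eq:collection} into the sum \eqref{eq:volfam}, $\vol(\c{N}(\s{P}))=\sum_{p\in\s{P}}\vol\big(\s{N}(p;\eps,\s{X}_{\tnull-1})\big)$.

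Next I would lower-bound each summand by $2(\eps-1)^n\vol(\cx{\s{X}_{\tnull-1}})$. Reading \lemref{lem:ball} with $\cx{\s{X}_{\tnull-1}}$ in place of its ``$\s{B}$'' — legitimate since $\Nxy{\anon}{\s{X}_{\tnull-1}}$, and hence $\dxyK{\anon}{\anon}{\s{X}_{\tnull-1}}$, sees $\s{X}_{\tnull-1}$ only through its absolute convex hull (the proof of \lemref{lem:ball} uses only $x,p\in\cx{\s{B}\cup\s{x}}$ and $-p,y\in\cx{\s{B}\cup\s{y}}$, which persist when $p$ ranges over $\cx{\s{X}_{\tnull-1}}$) — one gets $\{p\}\oplus(\eps-1)\cx{\s{X}_{\tnull-1}}\subseteq\s{N}(p;\eps,\s{X}_{\tnull-1})$. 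Property \ref{it:met-2} of \lemref{lem:multdist} makes $\s{N}(p;\eps,\s{X}_{\tnull-1})$ symmetric about the origin, so it also contains $\{-p\}\oplus(\eps-1)\cx{\s{X}_{\tnull-1}}$. These are two translates of the symmetric body $(\eps-1)\cx{\s{X}_{\tnull-1}}$, each of volume $(\eps-1)^n\vol(\cx{\s{X}_{\tnull-1}})$ by $n$-homogeneity of $\vol$, and I would argue they are disjoint by checking $p\notin(\eps-1)\cx{\s{X}_{\tnull-1}}$, i.e. $\Nxy{p}{\s{X}_{\tnull-1}}>\eps-1$; for the set of interest $\s{P}=\c{P}_\mu\subset\tfrac{\rad}{\mu}\s{W}$ this holds, since if $p$ is the image of $x_t\in\c{X}_\mu$ then \eqref{eq:sep-step1} gives $\Nxy{p}{\s{X}_{t-1}}>\rad$, and $\s{X}_{\tnull-1}\subseteq\s{X}_{t-1}$ (as $t\geq\tnull$) gives $\Nxy{p}{\s{X}_{\tnull-1}}\geq\Nxy{p}{\s{X}_{t-1}}>\rad>\eps-1$. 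Summing the resulting inequality $\vol\big(\s{N}(p;\eps,\s{X}_{\tnull-1})\big)\geq 2(\eps-1)^n\vol(\cx{\s{X}_{\tnull-1}})$ over the $|\s{P}|$ terms of \eqref{eq:volfam} then yields \eqref{eq:Volnlower}.

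I expect the only point needing real care — everything else being a direct citation of Corollary~\ref{coro:seppack} and \lemref{lem:ball} — to be the factor $2$ in the per-point estimate: one must verify that the two antipodal translates $\{\pm p\}\oplus(\eps-1)\cx{\s{X}_{\tnull-1}}$ genuinely fail to overlap. This is exactly where the structure of $\c{P}_\mu$ enters, namely that it lies on the boundary sphere of the blown-up ball $\tfrac{\rad}{\mu}\s{W}$ and is $\rad$-separated, which keeps its points bounded away from the origin in the $\Nxy{\anon}{\s{X}_{\tnull-1}}$-norm.
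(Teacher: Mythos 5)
Your proof is correct and takes essentially the same route as the paper's, which likewise combines the disjointness supplied by Corollary~\ref{coro:seppack} (turning \eqref{eq:collection} into the sum \eqref{eq:volfam}) with the per-point estimate \eqref{eq:lbstep} obtained from \lemref{lem:ball} together with property \ref{it:met-2} of \lemref{lem:multdist}. The only difference is that you explicitly justify the factor $2$ by checking that the antipodal translates $\pm p\oplus(\rad^{\frac{1}{2}}-1)\cx{\s{X}_{\tnull-1}}$ are disjoint (via $\Nxy{p}{\s{X}_{\tnull-1}}>\rad>\rad^{\frac{1}{2}}-1$ for points of $\c{P}_\mu$), a step the paper asserts only informally by appeal to \figref{fig:sketch2}.
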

\begin{proof}
        Apply \eqref{eq:lbstep} to every term in the sum \eqref{eq:volfam}.
\end{proof}

\subsubsection{Upper bound on volume of $\c{N}(\s{P})$}

Consider some arbitrary point $q\in \s{N}(p;\dd^{\frac{1}{2}}, \s{X}_{\tnull-1})$ for some $p$ in the $\dd$-separated set $\s{P}$ and recall that $p \in \tfrac{\dd}{\mu}\s{W}$. Then, from the construction of the sets $\s{N}$ as \eqref{eq:NpdK} we can conclude that 
\begin{align}
\Nxy{q}{\s{X}_{\tnull-1} \cup p} \leq \dxyK{q}{p}{\s{X}_{\tnull-1}} \leq \dd^{\tfrac{1}{2}}.
\end{align}
Moreover, since we can upper bound $ \s{W}$ as $\s{W} \subset \kk \cx{\s{X}_{\tnull-1}}$, we also obtain  $\s{X}_{\tnull-1} \cup p \subset \max\{1, \tfrac{\kk\dd}{\mu}\}  \cx{\s{X}_{\tnull-1}}$ and therefore the point $q$ satisfies
\begin{align}
      \notag &&\Nxy{q}{\max\{1, \tfrac{\kk\dd}{\mu}\}  \s{X}_{\tnull-1}} \leq \Nxy{q}{\s{X}_{\tnull-1} \cup p} \leq \dd^{\frac{1}{2}} \\
      \label{eq:qpointin}\Leftrightarrow && q \in \dd^{\frac{1}{2}}\max\{1, \tfrac{\kk\dd}{\mu}\} \cx{\s{X}_{\tnull-1}}.
\end{align} 
Hence, \eqref{eq:qpointin} shows that the collection $\c{N}(\s{P})$ is a subset of $\dd^{\frac{1}{2}}\max\{1, \tfrac{\kk\dd}{\mu}\} \cx{\s{X}_{\tnull-1}}$ which proves the following upperbound on the volume $\vol(\c{N}(\s{P}))$:
\begin{lem}
        Let $\c{N}(\s{P})$ be the collection \eqref{eq:collection} corresponding to a $(\dd,\s{X}_{\tnull-1})$-separated ($\dd>1$) set $\s{P}$, then the volume $\vol(\c{N}(\s{P}))$ is bounded above by
        \begin{align}\label{eq:Volnupper}
                \vol(\c{N}(\s{P})) \leq \dd^{\frac{n}{2}}\max\{1,\tfrac{\kk\dd}{\mu}\}^n \vol(\cx{\s{X}_{\tnull-1}})
        \end{align}
\end{lem}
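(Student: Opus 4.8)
The plan is to reduce the volume estimate to a single set inclusion, namely
$\c{N}(\s{P}) \subset \dd^{\frac{1}{2}}\max\{1,\tfrac{\kk\dd}{\mu}\}\,\cx{\s{X}_{\tnull-1}}$,
and then conclude by monotonicity of Lebesgue volume together with the fact that $\vol(\anon)$ is homogeneous of degree $n$. Essentially all of the content is the inclusion, and the skeleton of its proof is already contained in the paragraph preceding the lemma; what remains is to spell out which norm/distance properties are being invoked at each step.

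First I would fix an arbitrary $p\in\s{P}$ and an arbitrary $q\in\s{N}(p;\dd^{\frac{1}{2}},\s{X}_{\tnull-1})$. By the defining equation \eqref{eq:NpdK} of $\s{N}(\anon;\anon,\anon)$ this means $\dxyK{q}{p}{\s{X}_{\tnull-1}}\leq\dd^{\frac{1}{2}}$, and since by \defref{def:xyK} the quantity $\dxyK{q}{p}{\s{X}_{\tnull-1}}$ upper-bounds the one-sided norm $\Nxy{q}{\s{X}_{\tnull-1}\cup\s{p}}$, we obtain $\Nxy{q}{\s{X}_{\tnull-1}\cup\s{p}}\leq\dd^{\frac{1}{2}}$. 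Next I would locate $p$: every point of $\s{P}$ lies on (or inside) the ball $\tfrac{\dd}{\mu}\s{W}$, and from the definition of $\kk=\BXtnull$ one has $\s{W}\subset\kk\,\cx{\s{X}_{\tnull-1}}$, so $p\in\tfrac{\kk\dd}{\mu}\cx{\s{X}_{\tnull-1}}$. Together with $\s{X}_{\tnull-1}\subset\cx{\s{X}_{\tnull-1}}$ this yields $\s{X}_{\tnull-1}\cup\s{p}\subset\max\{1,\tfrac{\kk\dd}{\mu}\}\cx{\s{X}_{\tnull-1}}$ and hence, passing to absolute convex hulls, $\cx{\s{X}_{\tnull-1}\cup\s{p}}\subset\max\{1,\tfrac{\kk\dd}{\mu}\}\cx{\s{X}_{\tnull-1}}$.

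Then I would apply the norm-monotonicity property (property \ref{it:norm-4} of \lemref{lem:normproperties}, i.e. a larger generating body gives a pointwise smaller norm) to the last inclusion to get $\Nxy{q}{\max\{1,\tfrac{\kk\dd}{\mu}\}\s{X}_{\tnull-1}}\leq\Nxy{q}{\s{X}_{\tnull-1}\cup\s{p}}\leq\dd^{\frac{1}{2}}$, which rearranges to $q\in\dd^{\frac{1}{2}}\max\{1,\tfrac{\kk\dd}{\mu}\}\cx{\s{X}_{\tnull-1}}$. Since $p\in\s{P}$ and $q\in\s{N}(p;\dd^{\frac{1}{2}},\s{X}_{\tnull-1})$ were arbitrary, the union \eqref{eq:collection} defining $\c{N}(\s{P})$ is contained in $\dd^{\frac{1}{2}}\max\{1,\tfrac{\kk\dd}{\mu}\}\cx{\s{X}_{\tnull-1}}$. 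Finally, monotonicity of volume gives $\vol(\c{N}(\s{P}))\leq\vol\!\big(\dd^{\frac{1}{2}}\max\{1,\tfrac{\kk\dd}{\mu}\}\cx{\s{X}_{\tnull-1}}\big)$, and degree-$n$ homogeneity of $\vol$ turns the right-hand side into $\big(\dd^{\frac{1}{2}}\max\{1,\tfrac{\kk\dd}{\mu}\}\big)^n\vol(\cx{\s{X}_{\tnull-1}})=\dd^{\frac{n}{2}}\max\{1,\tfrac{\kk\dd}{\mu}\}^n\vol(\cx{\s{X}_{\tnull-1}})$, which is exactly \eqref{eq:Volnupper}.

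I do not expect a genuine obstacle here: the only points requiring care are the direction of the norm-monotonicity inequality (bigger generating set $\Rightarrow$ smaller norm) and the elementary observation that $\dxyK{\anon}{\anon}{\s{B}}$ dominates each of the two one-sided norms in \defref{def:xyK} — both immediate from the definitions. Note in particular that this upper bound does not need the pairwise-disjointness of the sets $\s{N}(p;\dd^{\frac{1}{2}},\s{X}_{\tnull-1})$; that fact (needed only to combine this bound with the lower bound of \lemref{lem:lowerbound}) is supplied separately by \corref{coro:seppack} via the submultiplicativity property \ref{it:met-3} of \lemref{lem:multdist}.
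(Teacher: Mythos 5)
Your proposal is correct and follows essentially the same route as the paper: the paper's own argument also bounds $\Nxy{q}{\s{X}_{\tnull-1}\cup \s{p}}$ by $\dxyK{q}{p}{\s{X}_{\tnull-1}}\leq \dd^{1/2}$, uses $p\in\tfrac{\dd}{\mu}\s{W}\subset \tfrac{\kk\dd}{\mu}\cx{\s{X}_{\tnull-1}}$ to get the inclusion $\c{N}(\s{P})\subset \dd^{\frac{1}{2}}\max\{1,\tfrac{\kk\dd}{\mu}\}\cx{\s{X}_{\tnull-1}}$, and then concludes by monotonicity and degree-$n$ homogeneity of the volume. Your added remark that disjointness of the $\s{N}(p;\dd^{1/2},\s{X}_{\tnull-1})$ is not needed for this upper bound is also consistent with the paper, which invokes it only for the lower bound via \corref{coro:seppack}.
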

\subsubsection{Cardinality bound for $(\dd,\s{X}_{\tnull-1})$-separated sets $\s{P}$} 
Finally, the lower bound \eqref{eq:Volnlower} and upper bound \eqref{eq:Volnupper} imply the following bound on the cardinality of any $(\dd,\s{X}_{\tnull-1})$-separated set $\s{P}\subset\tfrac{\dd}{\mu} \s{W}$ with $\dd>1$ :
\begin{align}\label{eq:Pbound1}
|\s{P}| \leq \tfrac{1}{2}\left(\tfrac{\sqrt{\dd}}{\sqrt{\dd}-1}\right)^n \max\{1,\tfrac{\dd \kk}{\mu}\}^n
\end{align}
\figref{fig:sketch4} shows a pictorial summary of our derivation of the above inequality in $\R^2$. A $(\dd,\cx{\s{X}_{\tnull-1}})$-separated set $\s{P}=\{p_1,\dots,p_5\}$ is defined to satisfy $p_i \notin \s{N}(p_j,\dd,\s{X}_{\tnull-1})$, for all $i\neq j$ and as a consequence, we showed in \corref{coro:seppack} that the sets in the cover $\c{N}(\s{P})=\cup_{j}\s{N}(p_j,\sqrt{\dd},\s{X}_{\tnull-1})$,  are all disjoint. Then, \lemref{lem:ball} helped us establish that $\c{N}(\s{P})$ contains $2|\s{P}|$ many translations of the set $(\sqrt{\dd}-1) \cx{\s{X}_{\tnull-1}}$, which lead to the volume bound \eqref{eq:lbstep}. We obtain the upperbound \eqref{eq:Volnupper} by showing that $\c{N}(\s{P})$ has to be contained in the bigger box $\dd^{\frac{1}{2}}\max\{1, \tfrac{\kk\dd}{\mu}\} \cx{\s{X}_{\tnull-1}}$. So, in the context of the picture \figref{fig:sketch4}, we obtained our final cardinality bound \eqref{eq:Pbound1} by dividing the volume of the outer larger box by the volume of the smaller boxes.\\
\begin{figure}
        \resizebox{!}{0.33\textwidth}{
                \begin{tikzpicture}[font=\large] 

    \tikzset{invclip/.style={clip,insert path={{[reset cm]
    (-16383.99999pt,-16383.99999pt) rectangle (16383.99999pt,16383.99999pt)}}}}

    \definecolor{K}{HTML}{0047AB}
    \definecolor{Kcopy}{HTML}{66B2DD}
    \definecolor{blowup}{HTML}{FF511C}
    \definecolor{cone}{HTML}{FACC0E}
    \definecolor{ballpair}{HTML}{00C855}
    \colorlet{ball}{blowup!50!cone}

    \def \light{ 50 }
    \def \dark{ 60 }
    \def \lthick{ 1.5pt }
    \def \tta{ -10.00000000000000 } 
    \def \k{    -3.00000000000000 } 
    \def \l{     6.00000000000000 } 
    \def \d{     5.00000000000000 } 
    \def \h{     7.00000000000000 } 
    \def \r{     1.0000 } 
    \def \lam{   2.0 } 
    \def \xlen{   8.0 }
    \def \ylen{   7.0 }
    \def \pang{    70.0 } 
    \def \prad{   1.0}
    \def \del{   1.6 } 
    \pgfmathparse{1/\del}
                \let\invdel\pgfmathresult

    \def \beta{ 20.0}
    
    \clip (-3.65,-4) rectangle (8,4);
    \coordinate (A) at (-\r,-\r); 
    \coordinate (B) at (\r,-\r); 
    \coordinate (C) at (\r,\r); 
    \coordinate (D) at (-\r,\r); 
    \coordinate (O) at (0,0);
    \coordinate (P) at ({\prad*cos(\pang)},{\prad*sin(\pang)});
    \coordinate (Pn) at ({-\prad*cos(\pang)},{-\prad*sin(\pang)});
    \coordinate (A2) at ({-\lam*\r},{-\lam*\r}); 
    \coordinate (B2) at ({\lam*\r},{-\lam*\r}); 
    \coordinate (C2) at ({\lam*\r},{\lam*\r}); 
    \coordinate (D2) at ({-\lam*\r},{\lam*\r}); 
    \coordinate (xleft) at ({-0.5*\xlen},0);
    \coordinate (xright) at ({0.5*\xlen},0);
    \coordinate (ydown) at (0,{-0.5*\ylen});
    \coordinate (yup) at (0,{0.5*\ylen});
    \coordinate (P1) at (intersection of O--P and C--D);
    \coordinate (P2) at (intersection of O--P and C2--D2);
    \coordinate (Pn1) at (intersection of O--Pn and A--B);
    \coordinate (Pn2) at (intersection of O--Pn and A2--B2);
    \coordinate (Q) at ($(O)!0.75!\beta:(\lam,0)$);
    \coordinate (Pinv) at ($\invdel*(P2)$);
    \coordinate (Ctop) at ($(C)!10!(Pinv)$);
    \coordinate (Dtop) at ($(D)!10!(Pinv)$);
    \coordinate (ConeLabelPlus) at (1.5,2.5);
    \coordinate (blowuplabel) at (1.8,-0.5);
    \coordinate (ballstartarrow) at ($(P2)-(\del-1,0)-(0.4,0) $);
    \coordinate (ballendarrow) at (-3,1);
    \coordinate (ballstartarrow2) at ($(Pn2)-(\del-1,0)+(-0.2,0.4) $);
    \coordinate (ballendarrow2) at ($ (ballendarrow) -(1.0,0.5)$);
    \coordinate (Kcopyarrowstart) at  ($(P2)+0.7*(\del-1,0) $);
    \coordinate (Kcopyarrowstart2) at  ($(Pn2)+0.5*(\del-1,0) $);
    \coordinate (Kcopylabel) at (2.8, 1);
    \coordinate (Kcopylabel2) at (2.8, -2);
    
    \newcommand{\transK}[2]{
        \fill[thin, fill = Kcopy!\light, draw = Kcopy!\dark!black, fill opacity = 1]
            ($#1-(#2,#2)$) rectangle ($#1+(#2,#2)$);
        }

\def \dela{ 1.6}
\def \rpos{2.4}
\pgfmathparse{sqrt(\dela)}\let\sqrtdel\pgfmathresult
\pgfmathparse{sqrt(\dela)*\rpos}\let\normdel\pgfmathresult

\newcommand{\NrK}[6]{
\begin{scope}
\coordinate (X) at #1;
\coordinate (rX) at ($#4*(X)$);
\coordinate (Xinv) at (${1/#4}*(X)$);
\coordinate (L) at (intersection of rX--$#4*(#2)$ and Xinv--#3);
\coordinate (R) at (intersection of rX--$#4*(#3)$ and Xinv--#2);
\filldraw[#6] (Xinv) -- (L) -- (rX) -- (R) -- cycle ;
\filldraw[#6] ($-1*(Xinv)$) -- ($-1*(L)$) -- ($-1*(rX)$) -- ($-1*(R)$) -- cycle ;
\fill[black]    (X) circle [radius=2pt];  
\draw (X) node [above,black]     {\Large #5};
\end{scope}
}

\draw[->] (xleft)--(xright) node[right]{$e_1$};
\draw[->] (ydown)--(yup) node[above]{$e_2$};
\filldraw[thin, fill = K!\light, draw=K!\dark!black, fill opacity = 0.05]
($\normdel*(A)$) -- ($\normdel*(B)$)  -- ($\normdel*(C)$)
  -- ($\normdel*(D)$) -- cycle;

\filldraw[line width = \lthick, fill = K!\light, draw=K!\dark!black] 
(A) -- (B) -- (C) -- (D)  -- cycle;
\draw (O) node [K!\dark!black]     {\large $\cx{\s{X}_{\tnull-1}}$};

\draw ($(\normdel,\normdel)+(-1.5,0.5)$ ) node (outerboxnode) [right ,K!\dark!black, line width = \lthick] {\Large $\dd^{\frac{1}{2}}\max\{1, \tfrac{\kk\dd}{\mu}\} \cx{\s{X}_{\tnull-1}}$ };

\def \anga{ 160.0}
\def \angb{ 130}
\def \angc{ 90}
\def \angd{ 50}
\def \ange{ 20}

\NrK{(\anga:\rpos)}{A}{D}{\sqrtdel}{$p_1$}{line width = \lthick, fill= ballpair!\light, draw = ballpair!\dark!black};
\NrK{(\angb:\rpos)}{C}{A}{\sqrtdel}{$p_2$}{line width = \lthick, fill= ball!\light, draw = ball!\dark!black};
\NrK{(\angc:\rpos)}{C}{D}{\sqrtdel}{$p_3$}{line width = \lthick, fill= ball!\light, draw = ball!\dark!black};
\NrK{(\angd:\rpos)}{B}{D}{\sqrtdel}{$p_4$}{line width = \lthick, fill= ball!\light, draw = ball!\dark!black};
\NrK{(\ange:\rpos)}{B}{C}{\sqrtdel}{$p_5$}{line width = \lthick, fill= ball!\light, draw = ball!\dark!black};

\transK{(\anga:\rpos)}{ \sqrtdel-1 }; \transK{(\anga:-\rpos)}{ \sqrtdel-1 };
\transK{(\angb:\rpos)}{ \sqrtdel-1 }; \transK{(\angb:-\rpos)}{ \sqrtdel-1 };
\transK{(\angc:\rpos)}{ \sqrtdel-1 }; \transK{(\angc:-\rpos)}{ \sqrtdel-1 };
\transK{(\angd:\rpos)}{ \sqrtdel-1 }; \transK{(\angd:-\rpos)}{ \sqrtdel-1 };
\transK{(\ange:\rpos)}{ \sqrtdel-1 }; \transK{(\ange:-\rpos)}{ \sqrtdel-1 };

\NrK{(\anga:\rpos)}{A}{D}{\dela}{$p_1$}{ thin, fill= ballpair!\light, draw = ballpair!\dark!black, fill opacity = 0.1};
\NrK{(\angb:\rpos)}{C}{A}{\dela}{$p_2$}{ thin, fill= ball!\light, draw = ball!\dark!black, fill opacity = 0.1};
\NrK{(\angc:\rpos)}{C}{D}{\dela}{$p_3$}{ thin, fill= ball!\light, draw = ball!\dark!black, fill opacity = 0.1};
\NrK{(\angd:\rpos)}{B}{D}{\dela}{$p_4$}{ thin, fill= ball!\light, draw = ball!\dark!black, fill opacity = 0.1};
\NrK{(\ange:\rpos)}{B}{C}{\dela}{$p_5$}{ thin, fill= ball!\light, draw = ball!\dark!black, fill opacity = 0.1};

\coordinate (aoutlabelstart) at ($(\anga:-\rpos) + (0.7, +0.1)$);
\coordinate (aoutlabelend) at (4.4,-1);
\draw (aoutlabelend) node (balloutnode) [right, ballpair!\dark!black, line width = \lthick] {\Large $\s{N}(p_1;\dd,\s{X}_{\tnull-1})$ };
\draw [->, ballpair!\dark!black, line width = \lthick] (aoutlabelstart) 
to [out=-45, in=180] (balloutnode.west);

\coordinate (alabelstart) at ($(\anga:-\rpos) + (0.3, 0)$);
\coordinate (alabelend) at (3.8,-1.8);
\draw (alabelend) node (ballnode) [right ,ballpair!\dark!black, line width = \lthick] {\Large $\s{N}(p_1;\sqrt{\dd},\s{X}_{\tnull-1})$ };
\draw [->, ballpair!\dark!black, line width = \lthick] (alabelstart) 
to [out=-60, in=180] (ballnode.west);

\coordinate (alabelboxstart) at ($(\anga:-\rpos) + (0.0, 0)$);
\coordinate (alabelboxend) at (3.2,-2.7);
\draw (alabelboxend) node (ballboxnode) [right ,Kcopy!\dark!black, line width = \lthick] {\Large $(\sqrt{\dd}-1)\cx{\s{X}_{\tnull-1}}$ };
\draw [->, Kcopy!\dark!black, line width = \lthick] (alabelboxstart) 
to [out=-70, in=180] (ballboxnode.west);



\newcommand{\test}[3]{
\begin{scope}
    \def\myarr{($#1*(A)$)/($#1*(B)$), ($#1*(B)$)/($#1*(C)$), ($#1*(C)$)/($#1*(D)$), ($#1*(D)$)/($#1*(A)$) }
    \foreach \x/\y in \myarr{
        \fill [ fill = #3] \x -- ($#1*#2$) -- \y -- cycle;
        \fill [ fill = #3]  \x -- ($-#1*#2$) -- \y -- cycle;
    }
\end{scope}
}

\end{tikzpicture}
                }
                
                \caption{The geometry of a $(\dd,\cx{\s{X}_{\tnull-1}})$-separated set $\s{P}=\{p_1,\dots,p_5\}$. All sets are depicted geometrically accurate, assuming $\cx{\s{X}_{\tnull-1}}$ is the box $[-1,1]\times[-1,1]$ and $\dd=1.6$. }
                \label{fig:sketch4}
\end{figure}

Recalling the relationship between the variables $\dd$, $\mu$ and $\kk$ in \eqref{eq:deltaandmu}, we can express $\mu$ in terms of some $\dd>1$ and $\kk$ as
\begin{align}
\mu = \dd \kk  \left(\sqrt{\tfrac{1}{4}+\tfrac{1}{\dd \kk}} - \tfrac{1}{2} \right) = \left(\sqrt{\tfrac{1}{4} + \tfrac{1}{\dd \kk}} + \tfrac{1}{2} \right)^{-1}
\end{align}
and can equivalently rewrite \eqref{eq:Pbound1} in terms of constant $\kk$ and $\dd>1$ as a free variable :
\begin{align}\label{eq:Pbound2}
        |\s{P}| \leq \tfrac{1}{2}\left(\tfrac{\sqrt{\dd}}{\sqrt{\dd}-1}\right)^n \max\{\tfrac{1}{\dd \kk}, \sqrt{\tfrac{1}{4} + \tfrac{1}{\dd \kk}} + \tfrac{1}{2} \}^n (\dd \kk)^{n}
        \end{align}

\noindent In summary, \eqref{eq:Pbound2} establishes a bound on the cardinality of $|\s{P}|$ which serves as an upper-bound on $|\c{P}_\mu((x_t),\tnull)| = |\c{X}_\mu((x_t),\tnull)|$, thus the total number of unstable transitions $\c{U}_\mu$ that can occur in the interval $[\tnull,\infty)$ of any closed loop trajectory $(x_t)$. We conclude by restating the results \thmref{thm:finiteunstable} again in terms of $\dd$:
\begin{thm*}
        For any trajectory $(x_t)$ of the closed loop \eqref{eq:clloop} and any $\tnull \geq 0$, the cardinality $|\c{X}_\mu\left((x_t);\tnull\right)|$ of the set $\c{X}_\mu\left((x_t);\tnull\right)$ is finite for any $\mu$ chosen as
        \begin{align}\label{eq:murange-thm-2}
                \mu = \left(\sqrt{\tfrac{1}{4} + \tfrac{1}{\dd \kk}} + \tfrac{1}{2} \right)^{-1}, \dd>1
        \end{align} 
        for some $\dd>1$ and bounded above as $|\c{X}_\mu\left((x_t);\tnull\right)|\leq N(\dd;\kk)$, where $N$ stands for the function
        \begin{align}\label{eq:Nbound-thm-2}
                {N}(\dd;\kk):=  \tfrac{1}{2}\left(\tfrac{\sqrt{\dd}}{\sqrt{\dd}-1}\right)^n \max\{\tfrac{1}{\dd \kk}, \sqrt{\tfrac{1}{4} + \tfrac{1}{\dd \kk}} + \tfrac{1}{2} \}^n (\dd \kk)^{n}
        \end{align}
        and $\kk$ is a constant computed from $\s{X}_{\tnull-1}$ as:
        \begin{align}\label{eq:kappa-2}
        \kk = \BXtnull := \max\limits_{z \in \s{W}} \Nxy{z}{\s{X}_{\tnull-1}}
        \end{align}
\end{thm*}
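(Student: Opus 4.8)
The plan is to chain together the lemmas of \secref{sec:boundingproof} and then perform the change of variable $\mu\leftrightarrow\dd$ recorded in \eqref{eq:deltaandmu}. Fix a closed-loop trajectory $(x_t)$, an initial time $\tnull\geq 0$, and $\dd>1$; set $\kk:=\BXtnull$ and $\mu:=\big(\sqrt{\tfrac{1}{4}+\tfrac{1}{\dd\kk}}+\tfrac{1}{2}\big)^{-1}$, which by \eqref{eq:deltaandmu} is the same as saying $\mu\in\c{I}_\kk$ with $\dd=\tfrac{\mu^2}{1-\mu}\tfrac{1}{\kk}$. The first step is to reduce bounding $|\c{X}_\mu((x_t);\tnull)|$ to bounding $|\c{P}_\mu((x_t);\tnull)|$: by \lemref{lem:card} these cardinalities are equal, and by \lemref{lem:sepcondition-nometric} the set $\c{P}_\mu$ is a $(\dd;\s{X}_{\tnull-1})$-separated subset of $\tfrac{\dd}{\mu}\s{W}$. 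Hence it suffices to prove a cardinality bound valid for \emph{every} $(\dd;\s{X}_{\tnull-1})$-separated set $\s{P}\subset\tfrac{\dd}{\mu}\s{W}$ with $\dd>1$, and then specialize to $\s{P}=\c{P}_\mu$.

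The second step is the volumetric argument for a generic such $\s{P}$. Form the cover $\c{N}(\s{P})$ of \eqref{eq:collection} out of the sets $\s{N}(p;\dd^{1/2},\s{X}_{\tnull-1})$, $p\in\s{P}$. Since $\s{P}$ is $\dd$-separated, the multiplicative triangle inequality (property~\ref{it:met-3} of \lemref{lem:multdist}) and \corref{coro:seppack} make these sets pairwise disjoint, so $\vol(\c{N}(\s{P}))=\sum_{p\in\s{P}}\vol\big(\s{N}(p;\dd^{1/2},\s{X}_{\tnull-1})\big)$. The lower bound \eqref{eq:Volnlower} of \lemref{lem:lowerbound} — which uses \lemref{lem:ball} and the fact that each $\s{N}(p;\dd^{1/2},\s{X}_{\tnull-1})$ contains two disjoint translates of $(\dd^{1/2}-1)\cx{\s{X}_{\tnull-1}}$, valid because $\dd^{1/2}>1$ — gives $\vol(\c{N}(\s{P}))\geq 2(\dd^{1/2}-1)^n\vol(\cx{\s{X}_{\tnull-1}})\,|\s{P}|$; and the inclusion $\s{X}_{\tnull-1}\cup\s{p}\subset\max\{1,\tfrac{\kk\dd}{\mu}\}\cx{\s{X}_{\tnull-1}}$ (using $\s{W}\subset\kk\cx{\s{X}_{\tnull-1}}$) together with \eqref{eq:NpdK} shows $\c{N}(\s{P})\subset\dd^{1/2}\max\{1,\tfrac{\kk\dd}{\mu}\}\cx{\s{X}_{\tnull-1}}$, which is the upper bound \eqref{eq:Volnupper}. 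Dividing upper by lower and cancelling $\vol(\cx{\s{X}_{\tnull-1}})$ — strictly positive because $\mathrm{rank}(X_{\tnull-1})=n$ — yields \eqref{eq:Pbound1}, namely $|\s{P}|\leq\tfrac{1}{2}\big(\tfrac{\sqrt{\dd}}{\sqrt{\dd}-1}\big)^n\max\{1,\tfrac{\dd\kk}{\mu}\}^n$.

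The third step is to put the bound into the stated form. Applying \eqref{eq:Pbound1} with $\s{P}=\c{P}_\mu$ and using $|\c{X}_\mu|=|\c{P}_\mu|$ gives the same bound on $|\c{X}_\mu((x_t);\tnull)|$. Because $\mu$ and $\dd$ are in one-to-one correspondence via \eqref{eq:deltaandmu}, a short rationalization of $\tfrac{\dd\kk}{\mu}=\dd\kk\big(\sqrt{\tfrac{1}{4}+\tfrac{1}{\dd\kk}}+\tfrac{1}{2}\big)$ rewrites $\max\{1,\tfrac{\dd\kk}{\mu}\}^n$ as $\max\{\tfrac{1}{\dd\kk},\sqrt{\tfrac{1}{4}+\tfrac{1}{\dd\kk}}+\tfrac{1}{2}\}^n(\dd\kk)^n$, so \eqref{eq:Pbound1} becomes exactly $N(\dd;\kk)$ as defined in \eqref{eq:Nbound-thm-2}. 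Finiteness follows at once: $\dd>1$ gives $\sqrt{\dd}-1>0$ and $\kk=\BXtnull<\infty$ by the rank assumption, so $N(\dd;\kk)<\infty$ and $\c{X}_\mu((x_t);\tnull)$ is a finite set.

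I do not anticipate a real mathematical obstacle: every nontrivial ingredient — the cardinality-preserving radial projection (\lemref{lem:card}), the separation property (\lemref{lem:sepcondition-nometric}), the disjointness of the radius-$\dd^{1/2}$ cover (\corref{coro:seppack}), and the two volume estimates — is already established, and what remains is assembly plus algebra. The part needing the most care is bookkeeping: threading the square root on the covering radius $\dd^{1/2}$ consistently through \corref{coro:seppack}, \lemref{lem:ball}, and the two volume bounds (the square root is precisely what turns a $\dd$-separation into \emph{disjoint} covers of radius $\dd^{1/2}$), and confirming that the final substitution reconciles the two equivalent forms of the constant — $N(\mu;\kk)$ in \eqref{eq:Nbound-thm} and $N(\dd;\kk)$ in \eqref{eq:Nbound-thm-2} — e.g.\ via $\tfrac{\sqrt{\dd}}{\sqrt{\dd}-1}=\tfrac{\mu}{\mu-\sqrt{\kk(1-\mu)}}$ and $\tfrac{\dd\kk}{\mu}=\tfrac{\mu}{1-\mu}$. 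A smaller point worth checking is that \lemref{lem:clineqs}, phrased for $t>\tnull$, still delivers the upper bound \eqref{eq:ubnxy} at $t=\tnull$ with $\kk=\BXtnull$, so that the transition at $t=\tnull$ admitted in \defref{def:Tu} is covered.
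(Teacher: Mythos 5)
Your proposal is correct and follows essentially the same route as the paper's own proof: reduction to the projected set $\c{P}_\mu$ via \lemref{lem:card} and \lemref{lem:sepcondition-nometric}, the disjoint cover $\c{N}(\s{P})$ of radius $\dd^{1/2}$ via \corref{coro:seppack} and \lemref{lem:ball}, the two volume bounds \eqref{eq:Volnlower}--\eqref{eq:Volnupper}, and the final change of variables \eqref{eq:deltaandmu} turning \eqref{eq:Pbound1} into \eqref{eq:Nbound-thm-2}. The assembly and algebra you outline (including $\tfrac{\dd\kk}{\mu}=\tfrac{\mu}{1-\mu}$) match the paper's derivation of \eqref{eq:Pbound2}, so no gap remains.
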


        

\section{A connection between metric entropy bounds and model-free stability analysis}\label{sec:metricentropy}
\noindent The notion of metric entropy\footnote{ this is to be distinguished from the Kolmogorov-Sinai entropy of a dynamical system introduced in \cite{kolmogorov1958new}} dates back to early works of A.N. Kolmogorov \cite{kolmogorov1959varepsilon} in 1959 and more recently it has been proven useful to study stochastic processes in the field of high-dimensional statistics. As an example, chap. 5 of \cite{wainwright_2019} discusses how bounds on the metric entropy of a metric space can be leveraged to obtain probabilistic bounds on the supremum of sub-gaussian processes over that same metric space; Particularly in machine learning applications, these mathematical results can then be used to derive learning theoretic guarantees of algorithms.\\

\noindent Reexamining the line of arguments that lead to our theoretical guarantees suggests that there might be a possibly fruitful connection between metric entropy bounds and worst-case performance bounds in the context of learning and control problems.
In retrospect, the main technique for stability analysis can be described as representing the collection $\c{X}_\mu$ of unstable transitions as a packing set $\s{P}_\mu$ in the \textit{totally bounded} metric space $(\tfrac{\dd}{\mu} \s{W},d(\anon,\anon;\s{X}_{\tnull-1}))$; The bound on the cardinality $\c{X}_\mu$ presented in \thmref{thm:finiteunstable} and derived in \secref{sec:boundingproof} can be viewed as the corresponding metric entropy bound. 
In hindsight, this inspires a new potential approach to algorithm design for learning and control: Synthesizing a control law for which unstable transitions (potentially as broader defined than $\c{U}_\mu$ considered here) form a packing in some totally bounded metric space. Similar to our presented result, we could also hope that smaller metric entropy translates to improved closed loop performance guarantees.\\

\noindent We will proceed by introducing metric entropy and related concepts based off of \cite{wainwright_2019} and \cite{Dudley2010}. In the later section, we will draw the connection to our stability analysis presented in \secref{sec:boundingproof}.



\subsection{Metric entropy of pseudo-metric spaces}
\noindent A pseudometric space $(\s{S},d)$ consists of a set $\s{S}$ and a pseudometric $d:\s{S}\times\s{S} \mapsto \R_{\geq 0}$, which satisfies the following properties:
\begin{enumerate}[label=(\roman*)]\label{def:metric}
\item \label{it:xx}$d(x,x) = 0$ for any $x\in\s{S}$
\item $d(x,y) = d(y,x)$ for any $x,y \in\s{S}$ 
\item $d(x,y) \leq d(x,y) + d(y,z)$ for any $x,y,z \in\s{S}$ 
\end{enumerate}
If in addition $d(x,y)=0$ holds only if $x=y$, then $d$ is called a metric and correspondingly $(\s{S},d)$ is a metric space.
\noindent The $\eps$-packing of $\s{S}$ w.r.t to $d$ is a set $\s{P} \subset \s{S}$ such that for each two distinct points $p_1,p_2 \in \s{P}$, $p_1 \neq p_2$ holds $d(p_1,p_2) >\eps$. 
Correspondingly, the $\eps$-packing number of $\s{S}$ is the cardinality of the largest $\eps$-packing set $\s{P}$ of $\s{S}$. Formally this is defined in \defref{def:packingnum} 
\begin{defn}\label{def:packingnum}
Let $(\s{S},d)$ be a metric (or pseudo-metric) space. Then the $\eps$-packing number $D(\s{S},\eps)$ (or $D(\s{S},\eps,d)$) of $\s{S}$ is defined as
\begin{align}
        D(\s{S},\eps) := \sup\left\{m\left|
        \begin{array}{c} \text{ for some } p_1,\dots,p_m \in \s{S},\\
                \;d(p_i,p_j) > \eps \text{ for }1\leq i<j\leq m 
        \end{array}\right. \right\}
\end{align}
\end{defn}
\noindent If $D(\s{S},\eps)$ is finite for any $\eps>0$, then $(\s{S},d)$ is often called \textit{totally bounded}. In this case we will define the quantity $\log(D(\s{S},\eps))$ as the \textit{metric entropy} of the set $\s{S}$ w.r.t. the metric $d$. 
\begin{rem}
An alternative definition of metric entropy as in \cite{wainwright_2019}, is $\log(N(\s{S},\eps))$ where $N(\s{S},\eps)$ is the covering number of the set $\s{S}$. The distinction between both (and other) definitions is of conventional matter, as it is well-known that packing numbers and covering numbers behave in equivalent manners. For our purpose, we will use the more fitting definition \eqref{def:packingnum}, which is for example used in the works of R.M. Dudley \cite{Dudley2010}.
\end{rem}

\subsection{Bounding occurrence of unstable transitions through metric entropy}
\noindent The key result behind our analysis in \secref{sec:boundingproof} was to show that any $(\dd,\s{X}_{\tnull-1})$-separated subset $\s{P}\subset \tfrac{\dd}{\mu} \s{W}$ respects the cardinality bound \eqref{eq:Pbound2}. From \lemref{lem:multdist} we can directly see that the operation $\log(d(\anon,\anon;\s{B})$ satisfies the properties of a pseudo metric \defref{def:metric} and therefore $(\tfrac{\dd}{\mu} \s{W}, \log(d(\anon,\anon;\s{X}_{\tnull-1}) )$ is a pseudo metric space. Correspondingly, the set $\s{P}\subset \tfrac{\dd}{\mu} \s{W}$ is $\log(\dd)$-packing in that same pseudo-metric space and our cardinality bound can be seen as an upper bound on the packing-number $D(\tfrac{\dd}{\mu} \s{W}, \log(\dd) )$ of the set $\tfrac{\dd}{\mu} \s{W}$ w.r.t. to the pseudometric $\log(d(\anon,\anon;\s{X}_{\tnull-1}) )$. Moreover, since we established the bound \eqref{eq:Pbound2} for every $\dd>1$ (or $\log(\dd)>0$), the space $(\tfrac{\dd}{\mu} \s{W}, \log(d(\anon,\anon;\s{X}_{\tnull-1}) )$ is a totally bounded pseudometric space with inequality \eqref{eq:Pbound2} implying a particular metric entropy bound. 

Hence in hindsight, our approach to stability analysis relied on mapping the set of unstable transitions $\cl{X}_\mu$ onto a fitting pseudometric space in which the metric entropy imposes a direct bound on the cardinality of the set $\cl{X}_\mu$. An interesting topic of further research is whether this general principle could be leveraged for model-free stability analysis and controller synthesis in broader learning and control problem settings.

\section{Simulation}\label{sec:sim}
\noindent We ran $N=1000$ simulations of the causal cancellation controller $\CC$ defined in \eqref{eq:cc2}. For the $k$th experiment, the trajectories $(x^k_t)$, $(u^k_t)$ are produced by the closed loop equations 
\begin{align}\label{eq:simsys}
        x^k_{t+1} &= A^k_0 x^k_t + \CCt{t}(x^k_t,X^{k+}_{t-1}, X^k_{t-1}, U^k_{t-1}) + w^k_t,
\end{align}
 and the system matrix $A^k_{0}\in \R^{3 \times 3}$, initial condition $x^k_0\in \R^3$ and disturbance $w^k_t$ is picked at random.

\begin{figure*}[ht]
        \begin{tikzpicture} 
                \tikzset{
                        small dot/.style={fill=black,circle,scale=0.3}
                }
                \definecolor{K}{HTML}{0047AB}
                \definecolor{Kcopy}{HTML}{66B2DD}
                \definecolor{blowup}{HTML}{FF511C}
                \definecolor{cone}{HTML}{FFBE00}
                \definecolor{ballpair}{HTML}{00C855}
                \colorlet{ball}{blowup!50!cone}
                \colorlet{K1}{K!10}
                \colorlet{K2}{K!20}
                \colorlet{K3}{K!30}
                \colorlet{K4}{K!40}
                \def \light{ 50 }
                \def \dark{ 60 }
                \def \opa{ 0.25 }
                \def \lin{ 0.5}
                \def \limi{30}
                 \def\iter{plotdata10000.99/10/0.5/solid,  plotdata10000.9/30/0.5/solid, plotdata10000.5/40/0.5/solid}
                \newcommand{\quant}[3]{
                        \foreach \x/\y/\l/\sty in \iter{
                                \edef \temp{
                                        \noexpand\addplot+[name path= q,
                                        const plot,
                                        draw=#2,
                                        mark = none,
                                         style = \sty,
                                        line width = \l,
                                        fill = #3!\y]
                                        table[x=t, y=#1, col sep=comma] {\x.csv}; %
                                }
                                \temp
                                }
                }
                \begin{groupplot}[
                    group style={
                        group name=my plots,
                        group size=1 by 3,
                        x descriptions at=edge bottom,
                        y descriptions at=edge left,
                        horizontal sep=0.2cm,
                        vertical sep=0.2cm,
                    },
                    footnotesize,
                    width=0.49\textwidth,
                    height=4cm,
                    xlabel=time $t$,
                    xmin = 0,
                    xmax = \limi,
                    xtick distance=5,
                ]
                \nextgroupplot[ylabel = $\Nxy{x^k_t}{\s{W}^k}$,
                         xmin = 0,
                         xmax = \limi]

                \addplot+[name path= zero,
                        draw=K,
                        mark color=K,
                        mark size = 0.0,
                        line width = 0.5,
                        domain=0:\limi ]
                coordinates {(40,0) (0,0)};

        \quant{mkappa}{blowup!\dark!black}{blowup}
        \quant{xbvec}{K}{K}
                \addlegendentry{$\Nxy{x^k_t}{\s{W}^k}$};
                \addlegendentry{$m(\kappa_{\tau})$};

                \nextgroupplot[ylabel = $\Nxy{u^k_t}{\s{W}^k}$]
                \quant{ubvec}{K}{K}

                \nextgroupplot[ylabel = $\kappa^k_{\tau}$]
                \quant{kappa}{K}{K}

                \end{groupplot}
            \end{tikzpicture}
            \begin{tikzpicture} 
                \tikzset{
                        small dot/.style={fill=black,circle,scale=0.3}
                }
                \definecolor{K}{HTML}{0047AB}
                \definecolor{Kcopy}{HTML}{66B2DD}
                \definecolor{blowup}{HTML}{FF511C}
                \definecolor{cone}{HTML}{FFBE00}
                \definecolor{ballpair}{HTML}{00C855}
                \colorlet{ball}{blowup!50!cone}
                \colorlet{K1}{K!10}
                \colorlet{K2}{K!20}
                \colorlet{K3}{K!30}
                \colorlet{K4}{K!40}
                \def \light{ 50 }
                \def \dark{ 60 }
                \def \opa{ 0.25 }
                \def \lin{ 0.5}
                \def \limi{30}
                 \def\iter{plotdata10000.99/10/0.5/solid, plotdata10000.9/30/0.5/solid, plotdata10000.5/40/0.5/solid}
                \newcommand{\quant}[3]{
                        \foreach \x/\y/\l/\sty in \iter{
                                \edef \temp{
                                        \noexpand\addplot+[name path= q,
                                        const plot,
                                        draw=#2,
                                        mark = none,
                                         style = \sty,
                                        line width = \l,
                                        fill = #3!\y]
                                        table[x=t, y=#1, col sep=comma] {\x.csv}; %
                                }
                                \temp
                                }
                }
                \begin{groupplot}[
                    group style={
                        group name=my plots,
                        group size=1 by 2,
                        x descriptions at=edge bottom,
                        y descriptions at=edge left,
                        horizontal sep=0.2cm,
                        vertical sep=0.2cm,
                    },
                    footnotesize,
                    width=0.49\textwidth,
                    height=5.3cm,
                    xlabel=time $t$,
                    xmin = 0,
                    xmax = \limi,
                    xtick distance=5,
                ]
                \nextgroupplot[ylabel = $\Nxy{x^k_t}{2}$,
                                xmin = 0,
                                xmax = \limi]

                \quant{x2norm}{K}{K}
        
                \nextgroupplot[ylabel = $\Nxy{u^k_t}{2}$]
                \quant{u2norm}{K}{K}


                \end{groupplot}
            \end{tikzpicture}
            \caption{Results of 1000 closed-loop simulations with random $A^k_0$, $x^k_0$ and disturbances $w^k_t$ drawn from $[-1,1]^3$. The plots on the left show the largest $1\%$, $10\%$, $50\%$ percentile values of 
                    $\Nxy{x^k_t}{\s{W}^k}$, $\Nxy{u^k_t}{\s{W}^k}$, $\kappa^k_\tau$, $m(\kappa_\tau)$. The right plot shows the same percentile values for the state $x^k_t$ and input $u^k_t$ measured in 2-norm.}
        \label{fig:Xcomboquant}
        \end{figure*}

\begin{figure*}[ht]
        \begin{tikzpicture} 
                        \tikzset{
                                small dot/.style={fill=black,circle,scale=0.3}
                        }
                        \definecolor{K}{HTML}{0047AB}
                        \definecolor{Kcopy}{HTML}{66B2DD}
                        \definecolor{blowup}{HTML}{FF511C}
                        \definecolor{cone}{HTML}{FFBE00}
                        \definecolor{ballpair}{HTML}{00C855}
                        \colorlet{ball}{blowup!50!cone}
                        \colorlet{K1}{K!10}
                        \colorlet{K2}{K!20}
                        \colorlet{K3}{K!30}
                        \colorlet{K4}{K!40}
                        \def \light{ 50 }
                        \def \dark{ 60 }
                        \def \opa{ 0.25 }
                        \def \lin{ 0.5}
                        \def \limi{60}
                         \def\iter{X0.99/10/0.5, X0.9/30/0.5, X0.5/40/0.5}
                        \newcommand{\quant}[3]{
                                \foreach \x/\y/\l in \iter{
                                        \edef \temp{
                                                \noexpand\addplot+[name path= q,
                                                const plot,
                                                draw=#2,
                                                mark = none,
                                                style = solid,
                                                line width = \l,
                                                fill = #3!\y]
                                                table[x=t, y=#1, col sep=comma] {nonoiseplotdata1000\x.csv}; %
                                        }
                                        \temp
                                        }
                        }
                        \begin{groupplot}[
                            group style={
                                group name=my plots,
                                group size=1 by 3,
                                x descriptions at=edge bottom,
                                y descriptions at=edge left,
                                horizontal sep=0.2cm,
                                vertical sep=0.2cm,
                            },
                            footnotesize,
                            width=0.49\textwidth,
                            height=4cm,
                            xlabel=time $t$,
                            xmin = 0,
                            xmax = \limi,
                            xtick distance=5,
                        ]
                        \nextgroupplot[ylabel = $\Nxy{x^k_t}{\s{W}^k}$,
                                 xmin = 0,
                                 xmax = \limi]

                        \addplot+[name path= zero,
                        draw=K,
                        mark color=K,
                        mark size = 0.0,
                        line width = 0.5,
                        domain=0:\limi ]
                coordinates {(40,0) (0,0)};
                \quant{mkappa}{blowup!\dark!black}{blowup}
                
                \quant{xbvec}{K}{K}
                        \addlegendentry{$\Nxy{x^k_t}{\s{W}^k}$};
                        \addlegendentry{$m(\kappa_{\tau})$};
        
                        \nextgroupplot[ylabel = $\Nxy{u^k_t}{\s{W}^k}$]
                        \quant{ubvec}{K}{K}
        
                        \nextgroupplot[ylabel = $\kappa^k_{\tau}$]
                        \quant{kappa}{K}{K}
        
                        \end{groupplot}
                \end{tikzpicture}
                \begin{tikzpicture} 
                        \tikzset{
                                small dot/.style={fill=black,circle,scale=0.3}
                        }
                        \definecolor{K}{HTML}{0047AB}
                        \definecolor{Kcopy}{HTML}{66B2DD}
                        \definecolor{blowup}{HTML}{FF511C}
                        \definecolor{cone}{HTML}{FFBE00}
                        \definecolor{ballpair}{HTML}{00C855}
                        \colorlet{ball}{blowup!50!cone}
                        \colorlet{K1}{K!10}
                        \colorlet{K2}{K!20}
                        \colorlet{K3}{K!30}
                        \colorlet{K4}{K!40}
                        \def \light{ 50 }
                        \def \dark{ 60 }
                        \def \opa{ 0.25 }
                        \def \lin{ 0.5}
                        \def \limi{60}
                         \def\iter{X0.99/10/0.5, X0.9/30/0.5, X0.5/40/0.5}
                        \newcommand{\quant}[3]{
                                \foreach \x/\y/\l in \iter{
                                        \edef \temp{
                                                \noexpand\addplot+[name path= q,
                                                const plot,
                                                draw=#2,
                                                mark = none,
                                                 style = solid,
                                                line width = \l,
                                                fill = #3!\y]
                                                table[x=t, y=#1, col sep=comma] {nonoiseplotdata1000\x.csv}; %
                                        }
                                        \temp
                                        }
                        }
                        \begin{groupplot}[
                            group style={
                                group name=my plots,
                                group size=1 by 2,
                                x descriptions at=edge bottom,
                                y descriptions at=edge left,
                                horizontal sep=0.2cm,
                                vertical sep=0.2cm,
                            },
                            footnotesize,
                            width=0.49\textwidth,
                            height=5.3cm,
                            xlabel=time $t$,
                            xmin = 0,
                            xmax = \limi,
                            xtick distance=5,
                        ]
                \nextgroupplot[ylabel = $\Nxy{x^k_t}{2}$,
                                xmin = 0,
                                xmax = \limi]

                \quant{x2norm}{K}{K}
        
                \nextgroupplot[ylabel = $\Nxy{u^k_t}{2}$]
                \quant{u2norm}{K}{K}

        
                        \end{groupplot}
                \end{tikzpicture}

                \caption{Results of 1000 closed-loop simulations with random $A^k_0$, $x^k_0$ and $w^k_t = 0$. The plots on the left show the largest $1\%$, $10\%$, $50\%^{*}$  percentile values of $\Nxy{x^k_t}{\s{W}^k}$, $\Nxy{u^k_t}{\s{W}^k}$, $\kappa^k_\tau$, $m(\kappa_\tau)$. ($^{*}$ this percentile is too small to visualize for $\Nxy{x^k_t}{\s{W}^k}$ and $\Nxy{u^k_t}{\s{W}^k}$)  The right plot shows the same percentile values for the state and input measured in 2-norm.}
                \label{fig:Xcomboquant_nonoise}
\end{figure*}

 All entries of $A^k_0$ and $x^k_0$ are picked i.i.d. from the standard gaussian distribution $ \mathcal{N}(0,1)$. In each experiment the causal cancellation controller \eqref{eq:cc2} is initialized as $X_{-1}=\eps I$, $X^+_{-1} = 0$, $U_{-1} = 0$ with the fixed choice $\varepsilon=0.1$. 

 \noindent \figref{fig:Xcombo_noise} shows simulation results of a single experiment
where $A_0$ is chosen as 
\begin{align}\label{eq:A0eig}
&A_0 = \begin{bmatrix}1.4 &0.2 &1 \\ 0.2 & 1.3 & 1 \\ 0.5 & 0.3 & 2 \end{bmatrix} & \lambda(A_0) =  \begin{bmatrix}2.7 \\ 1.13 \\ 0.86\end{bmatrix}.
\end{align}
and has a large unstable eigenvalue $\lambda_i(A_0)$. \figref{fig:Xcomboquant} and \figref{fig:Xcomboquant_nonoise} summarize the $N$ closed-loop experiments for two scenarios of disturbances. The graphs show, as a function of $t$, the largest $1\%$, $10\%$, $50\%$ percentiles of the values $\Nxy{x^k_t}{\s{W}^k}, \Nxy{u^k_t}{\s{W}^k}, \kappa^k_{t} = \Nxy{\s{W}^k}{\s{X}_{t-1}}$ among the $N$ experiments. In experiment $k$, the set $\s{W}^k$ is constructed according to equation \eqref{eq:Wdef} from the disturbance sequence\footnote{We assume that after $t>T$ the disturbance $w^k_t$ stays in the set $\cx{\s{W}^k}$} $(w^k_t)^{T-1}_{t=0}$ and the virtual disturbances $\hat{w}^k_i$. For our initialization of $X_{-1},X^+_{-1},U_{-1}$, the vectors $\hat{w}^k_i$ take the values $ -\eps A^k_0 e_i$, $1 \leq i \leq n$, where $e_i$ denotes the $i$th axis of the standard basis in $\R^n$:
\begin{align} 
        \s{W}^k = \left\{w^k_t|\: 0\leq t < T \right\} \cup \left\{-\varepsilon A^k_0 e_i|\: 1\leq i \leq n \right\}
\end{align}
 For \figref{fig:Xcomboquant}, the disturbance sequence $(w^k_t)$ of each experiment is picked i.i.d. uniformly from the interval $[-1,1]^3$. For \figref{fig:Xcomboquant_nonoise}, the initial condition $x_{0,i}$ is chosen i.i.d. according to the gaussian distribution $\mathcal{N}(0,\sigma^2)$, $\sigma=10^{-3}$ and $w^k_t = 0$. Since we have no disturbance, for this case, $\s{W}^k$ is simply the set of virtual disturbances $\{-\eps A^k_0 e_i|\: 1 \leq i \leq n \}$.

 We discussed that as a corollary of our main result (see \eqref{eq:mboundeq}), the causal cancellation controller $\CC$ guarantees for each experiment the asymptotic bound
 \begin{align}
\limsup\limits_{t \rightarrow \infty} \Nxy{x^k_t}{\s{W}^k} \leq m(\kappa^k_t), \quad \forall t.
 \end{align}
 where we take the function $m(\anon)$ to abbreviate the expression
 \begin{align} 
 m(s):= s\left(\tfrac{1}{2} + \sqrt{\tfrac{1}{4}+\tfrac{1}{s}}\right) +1.
 \end{align}
In \figref{fig:Xcomboquant} and \figref{fig:Xcomboquant_nonoise}, we overlayed the percentiles of $\Nxy{x^k_t}{\s{W}^k}$ (blue) and $m(\kappa^k_t)$ (red) to show that, qualitatively, the experiments match the above theoretical guarantee. In each experiment, the controller eventually learns to stabilize the unknown system (consistently after $10$ time-steps) and eventually (see $t>20$  in \figref{fig:Xcomboquant}) is bounded above by the asymptotic bound $m(\kappa^k_t)$. Notice also, that as more online data is observed, the asymptotic bound $m(\kappa^k_t)$ tightens. 

\figref{fig:Xcomboquant_nonoise} is showing the closed loop performance in the no disturbance regime. This is to investigate how the controller $\CC$ performs in absence of excitation by the disturbance. We see in \figref{fig:Xcomboquant_nonoise} that the controller $\CC$ stabilizes the system in all experiments, but in comparison to \eqref{fig:Xcomboquant}, we have a longer learning transient. Notice that in \figref{fig:Xcomboquant_nonoise}, the percentiles of the constant $\kappa^k_t$ do not decrease over time as much as in the experiments of \figref{fig:Xcomboquant}. Recall that for time $t$, the constant $\kappa^k_t$ can be seen to approximate the remaining uncertainty of the unknown system $A^k_0$. Hence, \figref{fig:Xcomboquant_nonoise} shows that despite remaining uncertainty in the system, the controller still manages to stabilize the system. This reflects that $\CC$ does not primarily care about identifying the unknown matrix $A^k_0$ but rather, collects \textit{only} enough data about the matrix $A^k_0$ to be able to stabilize the closed loop.

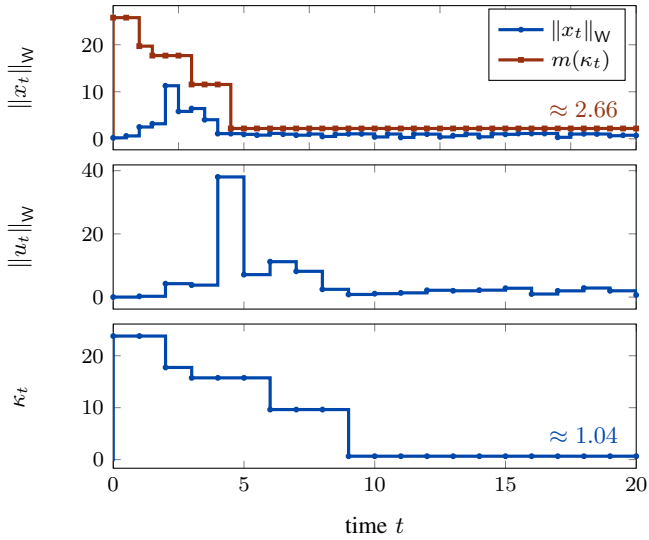
\begin{figure}[ht]
\begin{tikzpicture} 
        \tikzset{
                small dot/.style={fill=black,circle,scale=0.3}
        }
        \definecolor{K}{HTML}{0047AB}
        \definecolor{Kcopy}{HTML}{66B2DD}
        \definecolor{blowup}{HTML}{FF511C}
        \definecolor{cone}{HTML}{FFBE00}
        \definecolor{ballpair}{HTML}{00C855}
        \colorlet{ball}{blowup!50!cone}
        \def \light{ 50 }
        \def \dark{ 60 }

        \begin{groupplot}[
            group style={
                group name=my plots,
                group size=1 by 3,
                x descriptions at=edge bottom,
                y descriptions at=edge left,
                horizontal sep=0.2cm,
                vertical sep=0.2cm,
            },
            footnotesize,
            width=0.48\textwidth,
            height=3.5cm,
            xlabel=time $t$,
            xmin = 0,
            xmax = 20,
            xtick distance=5,
        ]
        \nextgroupplot[ylabel = $\Nxy{x_t}{\s{W}}$,
                 xmin = 0,
                 xmax = 40]
        \addplot+[const plot,
                draw=K,
                mark color=K,
                mark size = 0.5,
                line width = 1.2 ]
        table[x=t, y=xbvec, col sep=comma] {plotdata.csv}; 
        \addplot+[const plot,
                draw=blowup!\dark!black,
                mark color=blowup!\dark!black,
                mark size = 0.5,
                line width = 1.2]
                table[x=t, y=mkappa, col sep=comma] {plotdata.csv};
        \node at (axis description cs:0.9,0.28)  {\textcolor{blowup!\dark!black}{\small $\approx 2.66$}};
        \addlegendentry{$\Nxy{x_t}{\s{W}}$};
        \addlegendentry{$m(\kappa_{t})$};
        \nextgroupplot[ylabel = $\Nxy{u_t}{\s{W}}$]
        \addplot+[const plot,
                draw=K,
                mark color=K,
                mark size = 0.5,
                line width = 1.2]
        table[x=t, y=ubvec, col sep=comma] {plotdata.csv}; 
        \nextgroupplot[ylabel = $\kappa_{t}$]
        \addplot+[const plot,
                draw=K,
                mark color=K,
                mark size = 0.5,
                line width = 1.2]
        table[x=t, y=kappa, col sep=comma] {plotdata.csv}; 
          \node at (axis description cs:0.9,0.23)  {\textcolor{K}{\small $\approx 1.04$}};
        \end{groupplot}
    \end{tikzpicture}
    \caption{$\Nxy{x_t}{\s{W}}$ and $\Nxy{u_t}{\s{W}}$ trajectories for closed loop with uniform disturbance $w_{i,t}$ in $[-1,1]$ and $x_0 = [0.2,0,0.1]^T$.}
\label{fig:Xcombo_noise}
\end{figure}

%
\section{Conclusion}
\noindent In this paper we derive a simple model-free controller that can adaptively and robustly stabilize a linear system with full actuation without any additional knowledge on disturbance, noise or parameter bounds. The controller comes with uniform asymptotic and worst-case guarantees on the state-deviation. The control design and stability analysis is enabled by a novel approach inspired by convex geometry, and simulations show that the controller is able to simultaneously learn and control the system in an efficient manner, even when applied to an open loop system with large unstable eigenvalues. Future work will further explore how this new perspective to adaptive control can provide more learning and control algorithms with robustness guarantees and nonrestrictive assumptions in a more general setting. In addition, we will investigate how the presented ideas can help in providing robustness and performance bounds for present methods in adaptive control and reinforcement learning. 
\bibliographystyle{IEEEtran}
\bibliography{IEEEabrv,refs}

\appendices 

\section{Preliminaries of convex geometry}\label{sec:appcvx}

\subsection{Convex bodies and norms}\label{sec:normballs}
\noindent It is a well known fact in convex geometry, that there is a one-to-one relationship between symmetric convex bodies (see \defref{def:symcvxbody}) and norms in $\R^n$. Here we will discuss this equivalence and how it relates to the properties of the $\Nxy{\anon}{\s{S}}$-norms which we used in this paper.
 The following discussion is adapted from chapter 1.7 of the standard text \cite{schneider2014convex}, to which we refer for more detail.

\begin{defn}[Symmetric Convex Body]\label{def:symcvxbody}
        A set $\s{B} \subset \R^n$ is a symmetric convex body if $\s{B}$ is a closed, bounded convex set with non-empty interior 
        and $z \in \s{B} \Leftrightarrow -z \in \s{B}$. 
\end{defn}

\noindent Symmetric convex bodies and norms are equivalent in $\R^n$, in the sense that any norm on $\R^n$ is uniquely defined by their corresponding unit norm ball and the space of all possible unit norm balls in $\R^n$ is precisely the set of symmetric convex bodies in $\R^n$. We summarize this in the following Lemma:
\begin{lem}\label{lem:normballsym}
For any norm $\Nxy{\anon}{}$ in $\R^n$, the corresponding norm ball $\left\{x \left|\Nxy{x}{} \leq 1 \right. \right\}$ is a symmetric convex set in $\R^n$. Conversely, for any symmetric convex body $\s{B}$ in $\R^n$, the function $g(\s{B},\anon):\R^n \mapsto \R^+$ defined by 
\begin{align}
        g(\s{B},x) := \min \left\{r \geq 0 \left| x \in r \s{B} \right.\right\},\quad \forall x \in \R^n
\end{align} 
is a norm on $\R^n$.
\end{lem}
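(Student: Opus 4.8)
The plan is to prove the two directions separately, in each case simply checking the relevant axioms directly; the finite-dimensional equivalence of norms supplies the topological facts (closedness, boundedness, non-empty interior) with essentially no work.

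First I would take the forward direction: fix a norm $\|\cdot\|$ on $\R^n$ and set $\s{B} := \{x \mid \|x\| \le 1\}$. Symmetry $z \in \s{B} \Leftrightarrow -z \in \s{B}$ is immediate from $\|-z\| = \|z\|$, and convexity follows from the triangle inequality together with absolute homogeneity: if $\|x\|, \|y\| \le 1$ and $\theta \in [0,1]$ then $\|\theta x + (1-\theta)y\| \le \theta\|x\| + (1-\theta)\|y\| \le 1$. For the topological properties I would invoke the finite-dimensional fact that all norms on $\R^n$ are equivalent, so there are constants $c, C > 0$ with $c\|x\|_2 \le \|x\| \le C\|x\|_2$ for all $x$. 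The lower bound gives $\s{B} \subset \{x \mid \|x\|_2 \le 1/c\}$, so $\s{B}$ is bounded; the upper bound shows $\|\cdot\|$ is continuous, so $\s{B}$ is closed as the preimage of $(-\infty,1]$; and it also gives $\{x \mid \|x\|_2 < 1/C\} \subset \s{B}$, so $\s{B}$ has non-empty interior. Hence $\s{B}$ is a symmetric convex body in the sense of \defref{def:symcvxbody}.

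For the converse, fix a symmetric convex body $\s{B}$ and set $g(x) := g(\s{B},x) = \min\{r \ge 0 \mid x \in r\s{B}\}$. I would first establish well-definedness: since $\s{B}$ is symmetric and convex it contains $0$, and since it has non-empty interior it contains a Euclidean ball $\{y \mid \|y\|_2 \le \eps\}$ around $0$, so $x \in r\s{B}$ whenever $r \ge \|x\|_2/\eps$ and the set $\{r \ge 0 \mid x \in r\s{B}\}$ is non-empty. Convexity together with $0 \in \s{B}$ shows that if $x \in r\s{B}$ and $r' \ge r$ then $x/r' = (r/r')(x/r) + (1-r/r')\cdot 0 \in \s{B}$, so this set is an interval $[g(x),\infty)$, and closedness of $\s{B}$ makes the infimum $g(x)$ attained. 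Boundedness, $\s{B} \subset \{y \mid \|y\|_2 \le R\}$, gives $g(x) \ge \|x\|_2/R$, so $g(x)=0$ forces $x=0$ while $g(0)=0$ trivially. Absolute homogeneity $g(\lambda x) = |\lambda| g(x)$ follows for $\lambda>0$ from $x \in r\s{B} \Leftrightarrow \lambda x \in \lambda r\s{B}$ and for $\lambda<0$ additionally from $\s{B} = -\s{B}$. For the triangle inequality, writing $a := g(x)$, $b := g(y)$ with the minima attained, $x \in a\s{B}$ and $y \in b\s{B}$, so if $a+b>0$ then $\tfrac{x+y}{a+b} = \tfrac{a}{a+b}\cdot\tfrac{x}{a} + \tfrac{b}{a+b}\cdot\tfrac{y}{b} \in \s{B}$ by convexity (the degenerate summand being $0$ when $a$ or $b$ vanishes), so $g(x+y) \le a+b$; the case $a=b=0$ is trivial since then $x=y=0$.

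Since every step reduces to the defining properties of a symmetric convex body plus norm equivalence on $\R^n$, there is no serious obstacle here. The only places needing care are the well-definedness of $g$ — precisely the existence of the minimizing $r$, which is exactly where closedness of $\s{B}$ enters — and the separate treatment of the degenerate cases ($\lambda = 0$, and $a = 0$ or $b = 0$ in the triangle inequality), where expressions like $x/a$ must be avoided until those cases are split off.
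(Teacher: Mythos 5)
Your proposal is correct, but note that the paper itself gives no proof of this lemma: it is stated as a classical fact, with the discussion explicitly adapted from Chapter 1.7 of the cited standard text on convex bodies, so there is nothing internal to compare against. Your direct verification is exactly the standard textbook argument (norm ball axiom-check in one direction, gauge/Minkowski-functional construction in the other), and all the delicate points are handled: attainment of the minimum via closedness, positive definiteness via boundedness, and the degenerate cases in homogeneity and the triangle inequality. The only step I would tighten is the claim that non-empty interior gives a Euclidean ball $\{y \mid \Nxy{y}{2}\leq \eps\}$ \emph{around the origin}: non-empty interior alone only provides an interior point $p$, and you need symmetry and convexity to move the ball to $0$ (from $p+\eps\s{B}_2\subset\s{B}$ and $-p+\eps\s{B}_2\subset\s{B}$, convexity gives $\eps\s{B}_2=\tfrac{1}{2}(p+\eps\s{B}_2)+\tfrac{1}{2}(-p+\eps\s{B}_2)\subset\s{B}$). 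Since you already have both properties in hand, this is a one-line addition rather than a gap; with it, your proof is a complete, self-contained substitute for the external reference.
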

\noindent In convex geometry, the function $g(\s{B},\cdot)$ is often called the \textit{gauge function} or the \textit{Minkowski functional} of $\s{B}$ and describes a concrete way to evaluate a norm based on knowing its unit ball. For our purposes it will be convenient to extend the above definition to derive norms from general bounded sets $\s{S} \subset \R^n$ in the following way: Given an arbitrary bounded set $\s{S}$, we will refer to $\Nxy{\anon}{\s{S}}$ as the norm $g(\cx{\s{S}},\anon)$, obtained by the Minkowski functional of the \textit{absolute convex hull} $\cx{\s{S}}$ of the set $\s{S}$. We defined this formally in \defref{def:abscvx-0} and \defref{def:repnorm-0}:
\begin{defn*}
        Let $\s{S}$ be a set in $\R^n$, then the set of all finite linear combinations $\sum^{N}_{i=1} \lambda_i x_i$ of elements $x_i$ in $\s{S}$ with $\sum^{N}_{i=1} |\lambda_i| \leq 1$ is called the \textit{absolute convex hull} of $\s{S}$ and we will refer to its closure as $\cx{\s{S}}$:
        \begin{align}\label{eq:defcxS-app}
        \cx{\s{S}} := \mathrm{cl}\left\{\sum^{N}_{i=1} \lambda_i x_i\left|\quad \{x_i\}^{N}_{i=1} \subset \s{S}, \quad \sum^{N}_{i=1} |\lambda_i| \leq 1 \right.\right\}.
        \end{align}
\end{defn*}
\begin{rem*}
Equivalently, $\cx{\s{S}}$ is the closure of the convex hull of the set $(-\s{S})\cup \s{S}$.
\end{rem*}
\begin{defn*}
        For a fixed bounded set $\s{S} \subset \R^n$, let $\Nxy{\anon}{\s{S}}:\R^n \mapsto \R_{\geq 0}$ be the norm defined for all $x\in\R^n$ as
\begin{align*}
        \Nxy{x}{\s{S}} := \left\{\begin{array}{ll} \min \left\{r \geq 0 \left| x \in r \cx{\s{S}}\right.\right\}, & \text{for }x\in \mathrm{span}(\s{S}) \\ \infty, &\text{else}
         \end{array}\right.
\end{align*}
and for sets $\s{S}' \subset \R^n$, define $\Nxy{\s{S}'}{\s{S}}$ as the quantity
\begin{align}\label{eq:s1s2}
        \Nxy{\s{S}'}{\s{S}} &:= \max\limits_{z \in \cx{\s{S}'}} \Nxy{z}{\s{S}}
\end{align}
\end{defn*}

\noindent The definition is overloading the common notation for the norm $\Nxy{\anon}{\s{S}}$ as used in \cite{schneider2014convex} where $\s{S}$ is required to be a symmetric convex body. Applying \defref{def:repnorm-0} to the disturbance set $\s{W}$, \eqref{eq:Wdef} defines the previously introduced norm $\Nxy{\anon}{\s{W}}$ which we use to formulate the stability analysis. \figref{fig:norm1} illustrates an example of how the set $\cx{\s{W}}$ and the norm $\Nxy{x}{\s{W}}$ are related in two dimensions.

\noindent The following Lemma summarizes some key properties following from the above definitions. Note property \ref{it:norm-3} and \ref{it:norm-4}, which show that we can verify set-membership and set inclusions of symmetric convex bodies in terms of the norm. Moreover, property \ref{it:norm-2} describes a practical evaluation of $\Nxy{\anon}{\s{S}}$ for finite sets $\s{S}$ and shows 
\begin{align}\label{eq:previewopt-x}
        \begin{array}{rll}
                \Nxy{\lambda_{t-1}(x)}{1}=\min\limits_{\lambda} & \left\| \lambda\right\|_{1} &=\Nxy{x}{\s{X}_{t-1}} \\
        \mathrm{s.t.}& X_{t-1} \lambda = x 
        \end{array}
\end{align}
\begin{lem}\label{lem:normproperties}
Norms according to Definition \ref{def:repnorm-0} satisfy:
\begin{enumerate}[label=(\roman*)]
        \item\label{it:norm-1} for any $\s{S}$ suiting \defref{def:repnorm-0}, $\cx{\s{S}}$ is a symmetric convex body in $\R^n$. Moreover, $\cx{\s{S}}$ is the unit norm ball of $\Nxy{\anon}{\s{S}}$, so we can equiv. write $\Nxy{\anon}{\s{S}} = \Nxy{\anon}{\cx{\s{S}}}$.
        \item\label{it:norm-2}  if $\s{S}$ is a finite set $\s{S}=\{p_1,\dots,p_N\}$, then for any $x \in \R^n$, $\Nxy{x}{\s{S}}$ can be computed as:
        \begin{align*}
                \Nxy{x}{\s{S}} = \min\left\{\sum^{N}_{i=1} |\lambda_i|\;\left| \lambda_1, \dots, \lambda_N \text{ s.t. }\sum^{N}_{i=1} \lambda_i p_i = x  \right. \right\}
        \end{align*}
        \item \label{it:norm-3} for all $x\in \R^n$ holds $x \in \cx{\s{S}}  \Leftrightarrow \Nxy{x}{\s{S}} \leq 1$
        \item \label{it:norm-4} $\cx{\s{S}_1} \subset \cx{\s{S}_2}$ holds if and only if for all $x\in \R^n$ holds  $\Nxy{x}{\s{S}_1} \geq \Nxy{x}{\s{S}_2}$.
        \item \label{it:norm-5} for all $\gamma >  0$, holds $ \Nxy{\anon}{\frac{1}{\gamma} \s{S}} = \gamma\Nxy{\anon}{\s{S}}$
\end{enumerate}
\end{lem}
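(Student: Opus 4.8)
The plan is to establish property \ref{it:norm-1} first, since it identifies $\cx{\s{S}}$ as the unit ball of $\Nxy{\anon}{\s{S}}$ and thereby connects \defref{def:repnorm-0} to the gauge-function machinery of \lemref{lem:normballsym}; the remaining four properties then follow quickly. First I would verify that $\cx{\s{S}}$ is a symmetric convex body. Closedness is immediate because \defref{def:abscvx-0} takes a closure. Convexity and central symmetry follow directly from the defining constraint $\sum_i|\lambda_i|\le 1$: a convex combination of two admissible combinations is again admissible, and negating all $\lambda_i$ leaves the constraint intact, with both properties surviving the closure. Boundedness follows from boundedness of $\s{S}$ via $\|\sum_i \lambda_i x_i\|\le (\sup_{x\in\s{S}}\|x\|)\sum_i|\lambda_i|$. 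Nonempty interior is the one axiom that is not automatic, and is where the spanning hypothesis enters (see the obstacle below).

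To obtain the unit-ball identity I would use that $r\mapsto r\cx{\s{S}}$ is nondecreasing (since $0\in\cx{\s{S}}$ and $\cx{\s{S}}$ is convex), so $x\in r\cx{\s{S}}$ for some $r\le 1$ exactly when $x\in\cx{\s{S}}$; closedness of $\cx{\s{S}}$ guarantees the defining infimum in \eqref{eq:defrepnorm} is attained. Hence $\{x:\Nxy{x}{\s{S}}\le 1\}=\cx{\s{S}}$, which is precisely property \ref{it:norm-3}, and identifies $\Nxy{\anon}{\s{S}}$ with the Minkowski gauge $g(\cx{\s{S}},\anon)$ of \lemref{lem:normballsym}; idempotence $\cx{\cx{\s{S}}}=\cx{\s{S}}$ then yields $\Nxy{\anon}{\s{S}}=\Nxy{\anon}{\cx{\s{S}}}$. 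Property \ref{it:norm-4} is a direct consequence of \ref{it:norm-3}: the forward direction scales the inclusion $\cx{\s{S}_1}\subset\cx{\s{S}_2}$ by $r$ to compare the feasible radii in the defining minimizations, and the reverse direction tests membership of an arbitrary $x\in\cx{\s{S}_1}$ through the implication $\Nxy{x}{\s{S}_1}\le 1\Rightarrow\Nxy{x}{\s{S}_2}\le 1$.

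For property \ref{it:norm-2} I would use that for finite $\s{S}=\{p_1,\dots,p_N\}$ the absolute convex hull is already a polytope, so no closure is needed: $x\in r\cx{\s{S}}$ holds iff $x=\sum_i\lambda_i p_i$ with $\sum_i|\lambda_i|\le r$, after the reparametrization $\lambda_i=r\mu_i$. Minimizing $r$ then coincides with minimizing $\sum_i|\lambda_i|$ subject to $\sum_i\lambda_i p_i=x$, and attainment follows since the feasible set is a nonempty affine subspace (whenever $x\in\mathrm{span}(\s{S})$) on which $\|\anon\|_1$ has compact sublevel sets; this is exactly the identity \eqref{eq:previewopt-x} invoked by the controller. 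Property \ref{it:norm-5} follows from the scaling rule $\cx{\tfrac{1}{\gamma}\s{S}}=\tfrac{1}{\gamma}\cx{\s{S}}$ and the substitution $s=r/\gamma$ in the defining minimization.

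The main obstacle is the nonempty-interior requirement of \defref{def:symcvxbody}: if $\s{S}$ fails to span $\R^n$ then $\cx{\s{S}}$ lies in a proper subspace, has empty interior, and $\Nxy{\anon}{\s{S}}$ takes the value $\infty$ off $\mathrm{span}(\s{S})$, so it is only an extended norm. For the sets used in this paper this is not an issue, since the rank condition $\mathrm{rank}(X_{-1})=n$ forces $\mathrm{span}(\s{X}_{t-1})=\R^n$, and for $\s{W}$ and the remaining sets one simply takes $\mathrm{span}(\s{S})$ as the ambient space, where \lemref{lem:normballsym} applies verbatim. The only other point needing care is the attainment of the various minima, which is precisely why \defref{def:abscvx-0} closes the absolute convex hull; a short limiting argument, namely $x/r_n\to x/r^{*}\in\cx{\s{S}}$ by closedness, secures it.
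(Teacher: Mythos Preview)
Your proposal is correct and follows the same route as the paper, which simply says that \ref{it:norm-1}, \ref{it:norm-3}, \ref{it:norm-4} follow directly from \lemref{lem:normballsym} and that \ref{it:norm-2} follows by rewriting the definition of $\Nxy{\anon}{\s{S}}$ via the description \eqref{eq:defcxS} of $\cx{\s{S}}$. You have supplied the details the paper omits, including the careful handling of the nonempty-interior hypothesis and attainment of the minima; nothing in your argument deviates from the paper's intended approach.
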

\begin{proof}
        \noindent The statements of \lemref{lem:normproperties} are easy to verify: \ref{it:norm-1}, \ref{it:norm-3}, \ref{it:norm-4} follow directly from \lemref{lem:normballsym} and \ref{it:norm-2} follows by using the description of the set shown in \eqref{eq:defcxS-app} to rewrite the definition of $\Nxy{\anon}{\s{S}}$.
\end{proof}
\noindent We use the definition \eqref{eq:s1s2} of $\Nxy{\s{S}_1}{\s{S}_2}$ to measure the size of a set $\s{S}_1$ w.r.t. the norm $\Nxy{\anon}{\s{S}_2}$ of another set $\s{S}_2$. The following properties can be easily verified:
\begin{lem}\label{lem:s1s2distprop}
        Let $\s{S}_1$, $\s{S}_2$ be some bounded sets in $\R^n$ and recall definitions \defref{def:repnorm-0}. Then it holds
\begin{enumerate}[label=(\roman*)]
        \item\label{it:s1s2-1} The quantity $\Nxy{\s{S}_1}{\s{S}_2}$ can be equivalently defined as $\Nxy{\s{S}_1}{\s{S}_2}:= \min\{{\;t\;}|{\; \s{S}_1 \subset t \cx{\s{S}_2},\; t\geq 0 }\}$ 
        \item\label{it:s1s2-2} equivalence of norms in $\R^n$:
        \begin{align}
                \frac{1}{\Nxy{\s{S}_1}{\s{S}_2}}\Nxy{\anon}{\s{S_2}} \leq \Nxy{\anon}{\s{S_1}} \leq \Nxy{\s{S}_2}{\s{S}_1}\Nxy{\anon}{\s{S_2}}
        \end{align}
        \item\label{it:s1s2-3} $\Nxy{\s{S}_1}{\s{S}_2} \leq 1 \Leftrightarrow \s{S}_1 \subset \cx{\s{S}_2}$
        \item\label{it:s1s2-4} $\s{S}_1 \subset \Nxy{\s{S}_1}{\s{S}_2} \cx{\s{S}_2}$
\end{enumerate}
\end{lem}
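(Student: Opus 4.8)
The plan is to prove part~\ref{it:s1s2-1} first, since parts~\ref{it:s1s2-3} and~\ref{it:s1s2-4} then fall out of it immediately, and part~\ref{it:s1s2-2} follows by a short sandwiching argument using \lemref{lem:normproperties}. For part~\ref{it:s1s2-1}, the first step is to replace the maximum over $\cx{\s{S}_1}$ in the definition~\eqref{eq:s1s2} of $\Nxy{\s{S}_1}{\s{S}_2}$ by a supremum over $\s{S}_1$ itself. The inequality ``$\geq$'' is immediate from $\s{S}_1 \subset \cx{\s{S}_1}$. For ``$\leq$'', write an arbitrary element of the absolute convex hull as $z = \sum_i \lambda_i x_i$ with $x_i \in \s{S}_1$ and $\sum_i |\lambda_i| \leq 1$; since $\Nxy{\anon}{\s{S}_2}$ is a norm (by \lemref{lem:normballsym} and property~\ref{it:norm-1} of \lemref{lem:normproperties}), subadditivity and positive homogeneity give $\Nxy{z}{\s{S}_2} \leq \sum_i |\lambda_i|\, \Nxy{x_i}{\s{S}_2} \leq \sup_{x \in \s{S}_1} \Nxy{x}{\s{S}_2}$, and continuity of the norm lets one pass to the closure. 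Hence $\Nxy{\s{S}_1}{\s{S}_2} = \sup_{x \in \s{S}_1} \Nxy{x}{\s{S}_2}$. Finally, since $\Nxy{x}{\s{S}_2} \leq t$ is by the definition of the gauge equivalent to $x \in t\cx{\s{S}_2}$, the condition $\sup_{x \in \s{S}_1} \Nxy{x}{\s{S}_2} \leq t$ is equivalent to $\s{S}_1 \subset t\cx{\s{S}_2}$; because $\cx{\s{S}_2}$ is closed and the dilates $t\cx{\s{S}_2}$ are nested in $t$ (each contains $0$), the least admissible $t$ exists and equals $\Nxy{\s{S}_1}{\s{S}_2}$. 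This proves~\ref{it:s1s2-1}; specializing to $t = 1$ and using nestedness gives~\ref{it:s1s2-3}, and the fact that the least admissible $t$ is itself admissible gives~\ref{it:s1s2-4}.

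For part~\ref{it:s1s2-2} I would combine~\ref{it:s1s2-4} with properties~\ref{it:norm-4} and~\ref{it:norm-5} of \lemref{lem:normproperties}. Set $r := \Nxy{\s{S}_1}{\s{S}_2}$. By~\ref{it:s1s2-4}, $\s{S}_1 \subset r\cx{\s{S}_2} = \cx{r\s{S}_2}$, and since the right-hand side is closed and absolutely convex this upgrades to $\cx{\s{S}_1} \subset \cx{r\s{S}_2}$; property~\ref{it:norm-4} then yields $\Nxy{x}{\s{S}_1} \geq \Nxy{x}{r\s{S}_2} = \tfrac{1}{r}\Nxy{x}{\s{S}_2}$ for every $x$, the last equality being property~\ref{it:norm-5}. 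This is the left inequality in~\ref{it:s1s2-2}. Interchanging the roles of $\s{S}_1$ and $\s{S}_2$ gives $\Nxy{x}{\s{S}_2} \geq \tfrac{1}{\Nxy{\s{S}_2}{\s{S}_1}}\Nxy{x}{\s{S}_1}$, which rearranges to the right inequality.

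I do not anticipate a real obstacle here: this is a bookkeeping lemma whose content is carried entirely by \lemref{lem:normballsym} and \lemref{lem:normproperties}. The only steps that need a little care are the reduction from $\cx{\s{S}_1}$ to $\s{S}_1$ in~\ref{it:s1s2-1} --- which rests on subadditivity and homogeneity of the gauge together with a closure/continuity argument, and is where boundedness of $\s{S}_1$ (so that $\cx{\s{S}_1}$ is compact and the ``$\max$'' in~\eqref{eq:s1s2} is legitimate) is used --- and the handling of the extended-real-valued and degenerate cases (a set not lying in the span of the other, or $\Nxy{\s{S}_1}{\s{S}_2} \in \{0,\infty\}$), for which all four statements remain valid under the conventions of \defref{def:repnorm-0}. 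Everything else is a direct substitution of the already-established norm properties.
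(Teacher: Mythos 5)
Your proposal is correct and follows essentially the same route the paper indicates: establish \ref{it:s1s2-1} as the gauge/containment restatement of definition \eqref{eq:s1s2}, then obtain \ref{it:s1s2-4}, \ref{it:s1s2-3}, and \ref{it:s1s2-2} as consequences via the properties in \lemref{lem:normproperties} (the paper only sketches this as "easily verified" with the chain \ref{it:s1s2-1} $\Rightarrow$ \ref{it:s1s2-4} $\Rightarrow$ \ref{it:s1s2-3}, \ref{it:s1s2-2}). Your reduction of the maximum over $\cx{\s{S}_1}$ to the supremum over $\s{S}_1$ and the scaling step $\Nxy{x}{r\s{S}_2}=\tfrac{1}{r}\Nxy{x}{\s{S}_2}$ are exactly the details the paper leaves implicit, and they are handled correctly.
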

\noindent Property \ref{it:s1s2-1} states that we can equivalently define $\Nxy{\s{S}_1}{\s{S}_2}$ as the smallest factor $t$ such that $\s{S}_1$ is contained in $t\cx{\s{S}_2}$. This definition is visualized in the middle plot of \figref{fig:norm1} for some exemplary sets $\s{W}$ and $\s{X}_1$ in $\R^2$. The other properties can be derived as immediate consequences of property \ref{it:s1s2-1}: \ref{it:s1s2-1} $\Rightarrow$ \ref{it:s1s2-4} $\Rightarrow$ \ref{it:s1s2-3}, \ref{it:s1s2-2}.
\subsection{Distance between norms}\label{sec:distofnorms}
\noindent For bounded sets $\s{S}_1$, $\s{S}_2$ we define $d(\s{S}_1,\s{S}_2)$ as a multiplicative distance $d(\s{S}_1,\s{S}_2)$ between the two norms $\Nxy{\anon}{\s{S}_1}$ and $\Nxy{\anon}{\s{S}_2}$: 
\begin{defn}\label{def:K1K2}
        Let $\s{S}_1,\s{S}_2 \subset \R^n$ be sets with norms $\Nxy{\anon}{\s{S}_1}$, $\Nxy{\anon}{\s{S}_2}$ defined as in \defref{def:repnorm-0}. Then, define $d(\s{S}_1,\s{S}_2)$ as
        \begin{align}
                \label{eq:ds1s2} d(\s{S}_1,\s{S}_2) &:= \max\{\Nxy{\s{S}_1}{\s{S}_2},\Nxy{\s{S}_2}{\s{S}_1} \}
        \end{align}
\end{defn}

\begin{lem}\label{lem:K1K2prop}
The definitions \ref{def:K1K2} imply
\begin{enumerate}[label=(\roman*)]
        \item\label{it:dist-1} $d(\s{S},\s{S}) = 1$
        \item\label{it:dist-2} $d(\s{S}_1,\s{S}_2)=d(\s{S}_2,\s{S}_1)$
        \item\label{it:dist-3} $d(\s{S}_1,\s{S}_2) \leq d(\s{S}_1,\s{S}')d(\s{S}',\s{S}_2)$
\end{enumerate}
\end{lem}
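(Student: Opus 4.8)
The plan is to reduce all three claims to the containment characterisation of $\Nxy{\cdot}{\cdot}$ supplied by property \ref{it:s1s2-1} of \lemref{lem:s1s2distprop}, namely $\Nxy{\s{A}}{\s{B}}=\min\{t\ge 0:\ \s{A}\subset t\,\cx{\s{B}}\}$, together with the elementary properties of the norms $\Nxy{\cdot}{\s{S}}$ collected in \lemref{lem:normproperties}.

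For \ref{it:dist-1} I would simply note that $d(\s{S},\s{S})=\Nxy{\s{S}}{\s{S}}=\max_{z\in\cx{\s{S}}}\Nxy{z}{\s{S}}$ by the definitions in \eqref{eq:ds1s2} and \defref{def:repnorm-0}. By property \ref{it:norm-1} of \lemref{lem:normproperties} the set $\cx{\s{S}}$ is precisely the closed unit ball of the norm $\Nxy{\cdot}{\s{S}}$, so by property \ref{it:norm-3} this maximum is at most $1$, and it equals $1$ because the gauge of $\cx{\s{S}}$ is identically $1$ on its (relative) boundary, which is nonempty since $\cx{\s{S}}$ is a symmetric convex body. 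Claim \ref{it:dist-2} is immediate from \eqref{eq:ds1s2}, as the maximum of an unordered pair is symmetric.

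The only real content is \ref{it:dist-3}, for which I would first establish the sub-multiplicative inequality $\Nxy{\s{S}_1}{\s{S}_2}\le\Nxy{\s{S}_1}{\s{S}'}\,\Nxy{\s{S}'}{\s{S}_2}$ for arbitrary bounded sets. Writing $a:=\Nxy{\s{S}_1}{\s{S}'}$ and $b:=\Nxy{\s{S}'}{\s{S}_2}$, property \ref{it:s1s2-1} of \lemref{lem:s1s2distprop} gives $\s{S}_1\subset a\,\cx{\s{S}'}$ and $\s{S}'\subset b\,\cx{\s{S}_2}$. Since $b\,\cx{\s{S}_2}$ is itself closed and absolutely convex, monotonicity of the operator $\cx{\cdot}$ (from \defref{def:abscvx-0}) upgrades the second inclusion to $\cx{\s{S}'}\subset b\,\cx{\s{S}_2}$, so that $\s{S}_1\subset a\,\cx{\s{S}'}\subset ab\,\cx{\s{S}_2}$, and applying property \ref{it:s1s2-1} once more yields $\Nxy{\s{S}_1}{\s{S}_2}\le ab$. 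Using this inequality for both orderings, together with \ref{it:dist-2}, we obtain
\begin{align*}
\Nxy{\s{S}_1}{\s{S}_2}&\le\Nxy{\s{S}_1}{\s{S}'}\,\Nxy{\s{S}'}{\s{S}_2}\le d(\s{S}_1,\s{S}')\,d(\s{S}',\s{S}_2),\\
\Nxy{\s{S}_2}{\s{S}_1}&\le\Nxy{\s{S}_2}{\s{S}'}\,\Nxy{\s{S}'}{\s{S}_1}\le d(\s{S}_1,\s{S}')\,d(\s{S}',\s{S}_2),
\end{align*}
and taking the maximum of the two left-hand sides gives \ref{it:dist-3}.

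There is essentially no serious obstacle: the only step requiring a moment's care is the promotion of $\s{S}'\subset b\,\cx{\s{S}_2}$ to $\cx{\s{S}'}\subset b\,\cx{\s{S}_2}$, which relies on the fact that a scaled closed absolutely convex set is again closed and absolutely convex and that $\cx{\cdot}$ is monotone and idempotent on such sets — all immediate from \defref{def:abscvx-0}. Everything else is bookkeeping with the definitions, and indeed \ref{it:dist-1}--\ref{it:dist-3} are exactly the multiplicative analogues of the reflexivity, symmetry and triangle inequality needed for $\log d(\cdot,\cdot)$ to be a pseudometric, as invoked in \secref{sec:metricentropy}.
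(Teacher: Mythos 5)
Your proposal is correct and follows essentially the same route as the paper: items \ref{it:dist-1}--\ref{it:dist-2} are dispatched as immediate from the definition, and \ref{it:dist-3} is obtained from the chain of inclusions $\s{S}_1 \subset d(\s{S}_1,\s{S}')\cx{\s{S}'} \subset d(\s{S}_1,\s{S}')\,d(\s{S}',\s{S}_2)\cx{\s{S}_2}$ (and its mirror image) via properties \ref{it:s1s2-1} and \ref{it:s1s2-4} of \lemref{lem:s1s2distprop}, exactly as in the paper's proof. Your extra care in promoting $\s{S}'\subset b\,\cx{\s{S}_2}$ to $\cx{\s{S}'}\subset b\,\cx{\s{S}_2}$ is a detail the paper leaves implicit (it is also immediate from the definition \eqref{eq:s1s2} of $\Nxy{\s{S}'}{\s{S}_2}$ as a maximum over $\cx{\s{S}'}$), but it does not change the argument.
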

\begin{proof}
Statement \ref{it:dist-1} and \ref{it:dist-2} are trivial. Part \ref{it:dist-3} follows by using \ref{it:s1s2-4} and \ref{it:s1s2-1} of \lemref{lem:s1s2distprop}: 
Notice that 
\begin{align}
        \s{S}_1 &\subset d(\s{S}_1,\s{S}')\cx{\s{S}'} \subset d(\s{S}_1,\s{S}')d(\s{S}',\s{S}_2)\cx{\s{S}_2} \\
         \s{S}_2 &\subset d(\s{S}_2,\s{S}')\cx{\s{S}'} \subset d(\s{S}_2,\s{S}')d(\s{S}',\s{S}_1)\cx{\s{S}_1}
\end{align}
leads to $\max\{\Nxy{\s{S}_1}{\s{S}_2},\Nxy{\s{S}_2}{\s{S}_1} \}\leq d(\s{S}_2,\s{S}')d(\s{S}',\s{S}_1)$.\end{proof}
\noindent \lemref{lem:K1K2prop} shows that the map $d(\anon,\anon)$ can be viewed as a multiplicative distance between sets: The above properties imply that $\log d(\anon,\anon)$ is a pseudo-metric over the space of bounded sets in $\R^n$.

\section{Proofs}

\subsection{Theorem \ref{thm:main}}

\begin{proof}
        The proof follows by applying \lemref{lem:transitions}.The corresponding bounds for $(u_t)$ are then obtained by using the equation \eqref{eq:rewrite}.
        
        According to the setting of the theorem, consider some fixed trajectories $(x_t)$, $(u_t)$, reference time $\tnull$, $\mu \in \c{I}_\kk$ with $\kk=\BXtnull$.
        Then, as discussed before, a direct consequence of \thmref{thm:finiteunstable} is that there is some trajectory dependent finite time $T'<\infty$, such that in the time interval $[0,T']$ there are at most $N(\mu;\kk)$-many time-instances $\c{T} := \{t'_1,\dots,t'_M\}$, where
         \begin{align}
                \tnull\leq t'_1<t'_2<\dots<t'_{M}\leq T',\quad M<N(\mu;\kk)
         \end{align}
         at which $\mu$-unstable transitions occur and for all other time-instances $t \neq t'_i$ holds the opposite inequality of \eqref{eq:unstabdef}. Thus, depending on whether or not $t$ belongs to $\c{T}$, the transitions $(x_{t+1},x_t)$ of the trajectory $(x_t)$ satisfy
         \begin{subequations}
                \label{eq:summary}
         \begin{align}
                \Nxy{x_{t+1}}{\s{W}} > \max \left\{ \tfrac{1}{1-\mu}, \mu \Nxy{x_t}{\s{W}} +1  \right\},\quad \forall t\in\c{T}\\
                \Nxy{x_{t+1}}{\s{W}} \leq \max \left\{ \tfrac{1}{1-\mu}, \mu \Nxy{x_t}{\s{W}} +1  \right\},\quad \forall t\notin\c{T}.
         \end{align}
        \end{subequations}
        Moreover, combining \lemref{lem:transitions} with the above, we obtain that w.r.t. to the function $V_1(x;\mu) := \max\{0,\Nxy{x}{\s{W}}-\tfrac{1}{1-\mu}\}$, the transitions $(x_{t+1},x_t)$ respect the inequality 
        \begin{subequations}
                \label{eq:summaryV1}
         \begin{align}
                V_1(x_{t+1};\mu) &> \mu V_1(x_{t};\mu) ,\quad \forall t\in\c{T}\\
                V_1(x_{t+1};\mu) &\leq \mu V_1(x_{t};\mu) ,\quad \forall t\notin\c{T}
         \end{align}
        \end{subequations}
        and for function $V_2(x;\mu) := \max\{\Nxy{x}{\s{W}},\tfrac{1}{1-\mu}\}$, the transitions $(x_{t+1},x_t)$ satisfy
\begin{subequations}
        \label{eq:summaryV2}
        \begin{align}
        V_2(x_{t+1};\mu) &\leq \BXt{t-1}V_2(x_{t};\mu) + 1 ,\quad &\forall t\in\c{T}\\
        V_2(x_{t+1};\mu) &\leq V_2(x_{t};\mu) ,\quad &\forall t\notin\c{T}.
        \end{align}
\end{subequations}
        
        The bounds on $(x_t)$ in part \ref{it:limsup} and \ref{it:exp} were derived before from \eqref{eq:summaryV1}, (see \eqref{eq:exppreview} and \eqref{eq:limsuppreview}). For \ref{it:supbound}, notice that \eqref{eq:summaryV2} implies that for any $t$, $V_2(x_{t};\mu)$ can be bounded above by $V_2(x_{t'_M+1};\mu)$, since aside from the time intervals $[t'_i , t'_{i+1}]$, the quantity $V_2(x_{t};\mu)$ is guaranteed to be non-increasing.  Moreover \eqref{eq:summaryV2} also shows that $V_2(x_{t'_{M}+1};\mu)$ can be bounded above as
\begin{align} \label{eq:subboundstep}
& V_2(x_{t'_M+1};\mu) \leq \alpha V_2(x_{\tnull};\mu) + \beta\\
        \notag &\alpha := \prod\limits^{M}_{k=1}\BXt{t'_k-1},\quad 
        \beta := \sum^{M-1}_{k=0}\prod\limits^{k}_{j=1}\BXt{t'_{M-j}-1}.
\end{align}
        Recall that $\BXt{t}$ is non-increasing (hence $\BXt{t'_k-1}\leq \kk:=\BXt{\tnull-1}$) and the bound $M\leq N(\mu;\kk)$, to see that the constants $\alpha$ and $\beta$ are bounded above as
        \begin{align}
        &\alpha \leq \max\{1,\kk^{N(\mu;\kk)}\} &\beta \leq \frac{1-\kk^{N(\mu;\kk)}}{1-\kk}.
        \end{align}
        We then obtain the final inequality \eqref{eq:xsubbound} by substituting the above bounds into \eqref{eq:subboundstep} and observing that 
        \begin{align}
                \sup_{t\geq \tnull} \Nxy{x_t}{\s{W}}  \leq \sup_{t\geq \tnull}  V_2(x_{t};\mu) \leq V_2(x_{t'_M+1};\mu)
        \end{align}

                \noindent To obtain the corresponding bounds for the input $(u_t)$, recall that $u_t$ can be rewritten as
        \begin{align*}
                u_t  &= \left(U_{t-1} -X_{t:1} \right)\lambda_{t-1} (x_t)\\
                        &= (-A_0 X_{t-1} -W_{t-1})\lambda_{t-1} (x_t)\\
                        &=-A_0x_t-W_{t-1}\lambda_{t-1} (x_t)
        \end{align*}
                and that $\Nxy{\lambda_{t-1}}{1} = \Nxy{x_t}{\s{X}_{t-1}}$. This allows us to upper-bound $\Nxy{u_t}{\s{W}}$ by 
                \begin{align*} 
                        \Nxy{u_t}{\s{W}} &\leq \Nxy{A_0\frac{x_t}{\Nxy{x_t}{\s{W}}}}{\s{W}} \Nxy{x_t}{\s{W}} + \Nxy{\frac{x_t}{\Nxy{x_t}{\s{W}}}}{\s{X}_{t-1}}\Nxy{x_t}{\s{W}}\\ 
                        &\leq (\max\limits_{x \in \s{W}} \|A_0x\|_{\s{W}} + \Nxy{\s{W}}{\s{X}_{\tnull-1}}) \Nxy{x_t}{\s{W}}\\
                        &\leq (\|A_0\|_{\s{W}} + \kk)\Nxy{x_t}{\s{W}}
                \end{align*}
                and obtain desired bounds for $(u_t)$ by plugging in the already derived bounds for $(x_t)$. 
\end{proof}

\subsection{Lemma \ref{lem:transitions}}
\begin{proof}
        \underline{Part \ref{it:ineqstab} and \eqref{eq:V1geq}:} We can expand the inequality as
        \begin{align}\label{eq:convcomb}
                \Nxy{x_{t+1}}{\s{W}} \leq \max \left\{ \tfrac{1}{1-\mu}, \mu \Nxy{x_t}{\s{W}} +(1-\mu)\tfrac{1}{1-\mu}  \right\}
        \end{align}
        and can subtract $\tfrac{1}{1-\mu}$ on both sides to obtain 
        \begin{align*}
                &\Nxy{x_{t+1}}{\s{W}}-\tfrac{1}{1-\mu} \leq \max \left\{ 0, \mu (\Nxy{x_t}{\s{W}} -\tfrac{1}{1-\mu})  \right\}\\
                \Leftrightarrow & \max\{0,\Nxy{x_{t+1}}{\s{W}}-\tfrac{1}{1-\mu} \} \leq \mu \max \left\{ 0, \Nxy{x_t}{\s{W}} -\tfrac{1}{1-\mu}  \right\}
        \end{align*}
        Similarly, noticing that the second term on the right hand sight of \eqref{eq:convcomb} is a convex combination of $\Nxy{x_t}{\s{W}}$ and $\tfrac{1}{1-\mu}$, we can conclude 
        \begin{align*}
                \max\{\Nxy{x_{t+1}}{\s{W}},\tfrac{1}{1-\mu}\} \leq  \max\{\Nxy{x_{t}}{\s{W}},\tfrac{1}{1-\mu}\}.
        \end{align*}
        \underline{Part \ref{it:inequnstab}:} We previously derived, that the inequality \eqref{eq:normineqloose} holds for all time $t$:
        \begin{align*}
                \Nxy{x_{t+1}}{\s{W}} \leq \BXt{t-1} \Nxy{x_t}{\s{W}} + 1.
        \end{align*}
        Now, if in addition inequality \eqref{eq:unstabdef} holds, then we obtain
        \begin{align*}
                \max \left\{ \tfrac{1}{1-\mu}, \mu \Nxy{x_t}{\s{W}} +1  \right\} <\BXt{t-1} \Nxy{x_t}{\s{W}} + 1.
        \end{align*}
        Combining both the previous inequalities, we get
        \begin{align*}
                \max \left\{\tfrac{1}{1-\mu}, \Nxy{x_{t+1}}{\s{W}} \right\} &\leq \BXt{t-1} \Nxy{x_t}{\s{W}} + 1\\
                &\leq \BXt{t-1} \max\left\{\tfrac{1}{1-\mu},\Nxy{x_t}{\s{W}}\right\} + 1.
        \end{align*}
\end{proof}

\end{document}